\newcommand{\comment}[1]{}
\newcommand{\C}{{\mathbb C}}
\newcommand{\E}{{\mathbb E}}
\renewcommand{\P}{{\mathbb P}} 
\newcommand{\F}{{\mathcal F}}
\newcommand{\CF}{{\mathcal F}}
\newcommand{\CE}{{\mathcal E}}
\newcommand{\CT}{{\mathcal T}}
\newcommand{\CP}{\mathcal{P}}
\newcommand{\lam}{{\lambda}}
\newcommand{\eps}{{\epsilon}}
\newcommand{\Tr}{\operatorname{Tr}}
\newcommand{\conj}[1]{\overline{#1}}
\newcommand{\diag}{\operatorname{diag}}
\newcommand{\rk}{\operatorname{rk}}
\newcommand{\SC}{\operatorname{SC}}
\newcommand{\BC}{\operatorname{BC}}
\newcommand{\block}[2]{\left[\begin{array}{#1}#2\end{array}\right]}
\newcommand{\eql}[1]{\begin{equation}#1\end{equation}}
\newcommand{\eq}[1]{\begin{equation*}#1\end{equation*}}
\newcommand{\al}[1]{\begin{align*}#1\end{align*}}
\newcommand{\all}[1]{\begin{align}#1\end{align}}
\newtheorem{lem}{Lemma}
\newtheorem{thm}{Theorem}
\newtheorem{rem}{Remark}
\newtheorem{prop}{Proposition}
\newtheorem{defn}{Definition}
\begin{document}
\title{Outlier eigenvalue fluctuations of perturbed iid matrices}
\author{Anand B. Rajagopalan}
\address{Department of Mathematics, UCLA, Los Angeles CA 90095-1555}
\email{anandbr@math.ucla.edu}
\begin{abstract}  It is known that in various random matrix models, large perturbations create outlier eigenvalues which lie, asymptotically, in the complement of the support of the limiting spectral density. This paper is concerned with fluctuations of these outlier eigenvalues of iid matrices $X_n$ under bounded rank and bounded operator norm perturbations $A_n$, namely with $\lam(\frac{X_n}{\sqrt{n}}+A_n)-\lam(A_n)$. The perturbations we consider are allowed to be of arbitrary Jordan type and have (left and right) eigenvectors satisfying a mild condition. We obtain the joint convergence of the (normalized) asymptotic fluctuations of the outlier eigenvalues in this setting with a unified approach. 
\end{abstract}

\maketitle

\section{Introduction}
\subsection{Background}
Following the works of \cite{bbp} and \cite{bsspike} investigating the asymptotic spectrum of perturbed empirical covariance matrices or spiked population models, various efforts have been undertaken to better understanding the outlier eigenvalues of perturbed random matrix models. In the Hermitian setting, the works of \cite{cdf1}, \cite{cdf2},\cite{prs},\cite{rs},\cite{ky1}, and \cite{ky2} build up to an essentially complete picture of the asymptotic locations and normalized fluctuations of the outlier eigenvalues of bounded rank and bounded operator norm perturbations. 

This paper obtains the asymptotic fluctuations of outlier eigenvalues for the iid matrix ensemble under the same class of perturbations. Before stating our results, we introduce the theorem on the asymptotic location of the outlier eigenvalues due to \cite{tao} after presenting some introductory definitions and results.

\begin{defn}
A \emph{iid matrix} $X$ is an infinite array of (complex) iid random variables $(x_{i,j})_{i,j\geq 1}$ which we identify with the sequence $(X_n)_{n\geq 1}$, $X_n=(x_{i,j})_{1\leq i,j\leq n}$. We assume that the \emph{atom distribution} $x=x_{1,1}$ satisfies the moment conditions $\E x=0$ and $\E|x|^2=1$. We let $\Lambda(Y)$ denote the spectrum of $Y$ and let 
\eq{\mu_n:=\frac{1}{n}\sum_{\lam\in\Lambda(X_n)}\delta_{\frac{\lam}{\sqrt{n}}}} denote the \emph{empirical spectral distribution} of $X$.
\end{defn}
\begin{thm}[Circular law]
For an iid matrix $X$, we have
\eq{\mu_{X_n}\Rightarrow \mu_C:=\frac{1}{\pi}\mathbbm{1}_{\{z\in\C:|z|\leq 1\}}} almost surely, where $\Rightarrow$ denotes weak convergence.
\end{thm}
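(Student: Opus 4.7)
My plan is to follow Girko's Hermitization method, which reduces the convergence of the complex spectral distribution to the convergence of the singular value distributions of shifted matrices together with a small–singular–value estimate. Recall that weak convergence of probability measures on $\C$ with uniformly integrable logarithmic potentials is equivalent to pointwise (in $z$) convergence of the potentials $U_{\mu_n}(z):=\int\log|z-w|\,d\mu_n(w)$. For the spectral measure of $Y_n:=X_n/\sqrt{n}$ one has the identity
\eq{U_{\mu_n}(z)=\frac{1}{n}\log|\det(Y_n-zI)|=\frac{1}{2n}\log\det\bigl((Y_n-zI)(Y_n-zI)^*\bigr)=\int_0^\infty\log s\,d\nu_{n,z}(s),}
where $\nu_{n,z}$ is the empirical distribution of the singular values of $Y_n-zI$. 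So it suffices to (i) show that for a.e.\ $z\in\C$ the measure $\nu_{n,z}$ converges almost surely to a deterministic measure $\nu_z$ and that $\int\log s\,d\nu_{n,z}(s)\to\int\log s\,d\nu_z(s)$, and then (ii) verify that the limiting potential coincides with $U_{\mu_C}$.

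For step (i), the \emph{bulk} convergence $\nu_{n,z}\Rightarrow\nu_z$ is obtained by the moment method or the Stieltjes transform method applied to the Hermitization $H_z:=\bigl[\begin{smallmatrix}0 & Y_n-zI\\(Y_n-zI)^* & 0\end{smallmatrix}\bigr]$; one derives a self–consistent equation for the Stieltjes transform and checks that the candidate limit $\nu_z$ is the symmetrized version of the free multiplicative convolution that produces the explicit formula whose logarithmic moment is $\max(\log|z|,0)$. Up to routine concentration and truncation arguments (justified by $\E|x|^2=1$), this part is standard.

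The main obstacle is the \emph{uniform integrability of $\log s$ near $0$} with respect to $\nu_{n,z}$, i.e.\ ruling out an anomalous contribution from a small number of exceptionally small singular values of $Y_n-zI$. This breaks into two subproblems. First, one needs a lower bound on the \emph{least} singular value $\sigma_{\min}(Y_n-zI)$ of the form $\sigma_{\min}\geq n^{-C}$ with overwhelming probability; under only finite variance this is delicate and is the key technical contribution of Tao--Vu, extending least–singular–value bounds of Rudelson--Vershynin by a truncation and swapping argument that removes all higher moment assumptions. Second, one needs a polynomial bound on the number of singular values of $Y_n-zI$ lying in an interval $[n^{-C},\eta]$, which can be obtained from a Wegner–type estimate or directly from the convergence of $\nu_{n,z}$ near zero. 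Together these control the integral $\int_0^\eta|\log s|\,d\nu_{n,z}(s)$ uniformly in $n$.

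Given (i) and (ii) above, one computes the limiting potential $U(z)=\int\log s\,d\nu_z(s)$ explicitly and obtains $U(z)=\max(\log|z|,0)-\frac{1}{2}=U_{\mu_C}(z)$ (up to the $z$-independent constant that is absorbed on the disk), so that pointwise convergence of potentials holds for a.e.\ $z$. Since $\{U_{\mu_n}\}$ are dominated by a locally integrable function (using the same small-singular-value input to get local $L^1$ tightness), the unlog step of Girko then promotes this to $\mu_n\Rightarrow\mu_C$ almost surely, completing the proof. The hard step is unambiguously the quantitative least–singular–value bound under the minimal moment hypothesis $\E|x|^2=1$.
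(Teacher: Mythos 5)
The paper does not prove the circular law itself; it simply states it and cites the literature (``see \cite{taocirc} and references therein''). Your sketch is a faithful outline of the standard Girko Hermitization argument used in those references, correctly identifying the uniform integrability of the logarithmic potential and the least-singular-value lower bound under only a second-moment hypothesis as the crux, so it matches the approach the paper is implicitly relying on.
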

The circular law, which is the work of many authors (see \cite{taocirc} and references therein), in particular implies that the spectral radius of $X/\sqrt{n}$, $\rho(X/\sqrt{n})$, satisfies $\limsup\rho(X/\sqrt{n})\geq 1$ almost surely. The following is a complementary result; see \cite{bai-yin} for a proof. 
\begin{thm}\label{opnorm} Let $X_n$ be an iid matrix with atom distribution having bounded fourth moment. Then \eq{\rho\left(\frac{X}{\sqrt{n}}\right)=\lim_{l\rightarrow\infty}\left\|\left(\frac{X}{\sqrt{n}}\right)^l\right\|^{1/l}} converges to $1$ almost surely as $n\rightarrow\infty$. Moreover, for $l\geq 1$, $\|(\frac{X}{\sqrt{n}})^l\|$ converges to $l+1$ almost surely as $n\rightarrow\infty$. 
\end{thm}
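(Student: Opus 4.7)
The identity $\rho(A)=\lim_l\|A^l\|^{1/l}$ is Gelfand's formula, valid deterministically. The substantive task is to prove (a) the ``moreover'' claim $\|(X_n/\sqrt{n})^l\|\to l+1$ a.s.\ for each fixed $l$, and (b) the limit $\rho(X_n/\sqrt{n})\to 1$ a.s.

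My plan is to prove (a) first and derive (b) from it together with the circular law. Setting $A_n=(X_n/\sqrt{n})^l$, the elementary trace inequality $\|A_n\|^{2k}\leq \Tr((A_nA_n^*)^k)\leq n\|A_n\|^{2k}$ reduces the problem to computing $\E\,\Tr((A_nA_n^*)^k)$ in the limit $n\to\infty$, then sending $k\to\infty$. Expanding $A_nA_n^*=n^{-l}X_n^l(X_n^*)^l$, the expected trace becomes a sum over cyclic sequences of $2lk$ matrix indices; independence and $\E x=0$ force each entry to appear at least twice, so only pairings survive to leading order $n$. These surviving pairings are enumerated by Fuss--Catalan-type non-crossing diagrams whose generating function is explicit and whose $k\to\infty$ growth identifies the operator norm as the claimed limit $l+1$. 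Almost sure convergence is upgraded from convergence in expectation via a variance estimate plus Borel--Cantelli, using a standard truncation of the atom distribution at scale $n^\alpha$ to exploit the bounded fourth moment hypothesis.

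For (b), the lower bound $\liminf_n\rho(X_n/\sqrt{n})\geq 1$ a.s.\ follows directly from the circular law: the empirical spectral measure converges weakly to the uniform measure on the closed unit disk, so for every $\eps>0$ there eventually exist eigenvalues of modulus $\geq 1-\eps$. The upper bound uses $\rho(X_n/\sqrt{n})\leq \|(X_n/\sqrt{n})^l\|^{1/l}$, which by (a) gives $\limsup_n\rho(X_n/\sqrt{n})\leq (l+1)^{1/l}$ a.s.\ for each fixed $l$; sending $l\to\infty$ forces the limit to be $1$.

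The main obstacle will be the combinatorial enumeration of the non-crossing structures arising in the trace expansion of $(X_n^l(X_n^*)^l)^k$, which is substantially more delicate than the classical Bai--Yin case $l=1$ because the $2lk$ indices carry additional cyclic constraints from the inner $l$-powers; pinning down the precise limiting constant $l+1$ requires extracting the correct asymptotic from these counts as $k\to\infty$. The truncation and Borel--Cantelli step should be routine by comparison with the standard iid operator norm proof.
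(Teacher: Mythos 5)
The paper does not prove Theorem~\ref{opnorm}: it is quoted from \cite{bai-yin}, with the lower bound $\liminf_n\rho(X/\sqrt{n})\geq 1$ supplied (as the paper notes just above the statement) by the circular law. So your comparison target is really the Bai--Yin argument, and your overall architecture --- Gelfand's formula, a trace-moment upper bound, circular law for the lower bound, truncation and Borel--Cantelli --- does track that argument in outline.

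However, the combinatorial heart of your step (a) is wrong as stated, not merely unfinished. If you first send $n\to\infty$ with $k$ fixed, only non-crossing pairings survive, and $\frac{1}{n}\E\Tr\bigl((A_nA_n^*)^k\bigr)$ converges to the $k$-th moment of the limiting singular-value-squared distribution of $(X/\sqrt{n})^l$, namely the Fuss--Catalan number $\frac{1}{lk+1}\binom{(l+1)k}{k}$. Its exponential growth rate in $k$ is $(l+1)^{l+1}/l^l$, so your iterated limit identifies the \emph{bulk edge} $\sqrt{(l+1)^{l+1}/l^l}$, not $l+1$. These agree only at $l=1$; for $l\geq 2$ one has $(l+1)^{l+1}/l^l<(l+1)^2$, and Theorem~\ref{opnorm} is precisely the assertion that $(X/\sqrt{n})^l$ has outlier singular values strictly above the bulk edge. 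The reduction to ``pairings to leading order in $n$'' is structurally blind to these outliers, so your plan taken at face value would deliver only the lower bound $\liminf_n\|(X/\sqrt{n})^l\|\geq\sqrt{(l+1)^{l+1}/l^l}$ and, for $l\geq 2$, the wrong limit. To recover $l+1$ one must take the trace power $k=k_n$ growing polynomially in $n$ and retain the terms that are subleading in $n$ for fixed $k$ but super-exponentially numerous in $k$; this is exactly what the explicit Bai--Yin estimate for $\E\Tr\bigl((\tilde{X}^{k}(\tilde{X}^{k})^*)^p\bigr)$ reproduced in Appendix~\ref{sec4eps} accomplishes. It is emphatically \emph{not} ``routine by comparison with the $l=1$ case'' --- the $l=1$ case is misleading precisely because there the bulk edge and the norm happen to coincide, masking the phenomenon you would need to capture.
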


Now let $A=A_n$ be a deterministic matrix of rank $O(1)$ and operator norm $O(1)$. We will assume for notational convenience that 
$\Theta=\Theta_n:=\{\lam\in\Lambda(A_n):|\lam|>1\}$ is independent of $n$ for $n$ sufficiently large and we let $m_{\theta}$ denote the multiplicity of $\theta$. Then the following theorem (due to \cite{tao}, with generalizations to other models in \cite{renfrew}, \cite{rochet} and \cite{bc}) shows that outliers in the spectrum of $\frac{X}{\sqrt{n}}+A$ appear, in contrast to the situation in Theorem~\ref{opnorm}.

\begin{thm}\label{taothm}
Let $X$ be an iid matrix with bounded fourth moment and let $A$ and $\Theta$ be as above. For each $\theta\in\Theta$ there exists
\eq{\Lambda^{\theta}\subset\Lambda(\frac{X}{\sqrt{n}}+A)} with $|\Lambda^{\theta}|=m_{\theta}$ and for $\lam\in\Lambda^{\theta}$,
\eq{\lambda\rightarrow \theta} almost surely.
\end{thm}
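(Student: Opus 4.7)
The plan is to reduce the theorem to an $r \times r$ deterministic equation via a rank factorization and Sylvester's identity, then to apply Hurwitz's theorem.

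First I would factor $A_n = B_n C_n^*$ with $B_n, C_n \in \C^{n \times r}$, $r = \rk A_n = O(1)$, $\|B_n\|, \|C_n\| = O(1)$. Theorem~\ref{opnorm} gives $\rho(X/\sqrt{n}) \to 1$ a.s., so for $|z| > 1$ the resolvent exists and admits the Neumann expansion
\eq{\left(\tfrac{X}{\sqrt{n}} - z\right)^{-1} = -\sum_{k\geq 0} z^{-k-1}\left(\tfrac{X}{\sqrt{n}}\right)^k,}
where the quantitative estimate $\|(X/\sqrt{n})^k\| \to k+1$ a.s.\ makes the series summable uniformly in $n$ on any annulus $\{|z| \geq 1 + \delta\}$. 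Sylvester's determinant identity then shows that for such $z$ outside $\Lambda(X/\sqrt{n})$,
\eq{z \in \Lambda\!\bigl(\tfrac{X}{\sqrt{n}}+A_n\bigr) \iff F_n(z) := \det\!\bigl(I_r + C_n^*(X/\sqrt{n}-z)^{-1}B_n\bigr) = 0.}

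Next I would show that $F_n$ converges a.s., uniformly on compact subsets of $\{|z|>1\}$, to the deterministic function $F(z) := \det(I_r - z^{-1}C_n^* B_n)$. Substituting the Neumann expansion reduces this to (i) $C_n^*(X/\sqrt{n})^k B_n \to 0$ a.s.\ for each fixed $k \geq 1$, and (ii) uniform smallness in $n$ and $z$ of the truncation tail, which is immediate from $\|(X/\sqrt{n})^k\| \to k+1$. For (i), $\E\, u^*(X/\sqrt{n})^k v = 0$ for bounded deterministic $u,v$ and $k \geq 1$, and the bounded fourth moment hypothesis yields Markov bounds summable in $n$, enough for Borel--Cantelli. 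Uniformity in $z$ then follows from analyticity of $F_n$ and equicontinuity on compact sets via the Cauchy integral formula.

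To finish, I would invoke Hurwitz's theorem. The Sylvester characteristic polynomial identity $\det(zI_r - C_n^* B_n) = z^{r-n}\det(zI_n - B_nC_n^*)$ shows that the nonzero roots of $F(z) = z^{-r}\det(zI_r - C_n^* B_n)$ coincide, with algebraic multiplicity, with the nonzero eigenvalues of $A_n$. Thus around each $\theta \in \Theta$ the limit $F$ has an isolated zero of exact order $m_\theta$, and Hurwitz's theorem yields, a.s.\ for $n$ large, exactly $m_\theta$ zeros of $F_n$ in any sufficiently small disk about $\theta$. These zeros constitute $\Lambda^\theta$, and they converge to $\theta$ since the disk radius may be taken arbitrarily small.

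The main obstacle is the uniform a.s.\ convergence $F_n \to F$ on compact subsets of $\{|z|>1\}$. Pointwise convergence of each bilinear form $C_n^*(X/\sqrt{n})^k B_n$ to zero is routine, but legitimately truncating the infinite series requires a priori operator-norm bounds on high powers of $X/\sqrt{n}$ that are $O(1)$ in $n$; this is precisely what the quantitative part of Theorem~\ref{opnorm} supplies, going well beyond the mere fact that $\rho(X/\sqrt{n}) \to 1$.
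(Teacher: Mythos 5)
The paper does not prove Theorem~\ref{taothm}; it is cited as due to \cite{tao}, so there is no in-paper argument to compare against. Your strategy (rank-factoring $A_n=B_nC_n^*$, reducing via Sylvester's identity to the $r\times r$ determinant $F_n(z)=\det(I_r+C_n^*(X/\sqrt{n}-z)^{-1}B_n)$, Neumann expansion, then Rouch\'e/Hurwitz around each $\theta$) is indeed the one used in \cite{tao}.

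There is, however, a genuine gap in the almost-sure convergence step. You assert that the bounded fourth moment ``yields Markov bounds summable in $n$, enough for Borel--Cantelli'' for $C_n^*(X/\sqrt{n})^kB_n\to 0$. The second-moment estimate available here (Lemma~\ref{lemtao} of the paper) is $\E|u^*(X/\sqrt{n})^kv|^2=O(1/n)$, so Chebyshev gives a tail bound of order $1/n$, which is not summable. Upgrading to a fourth-moment Markov bound works for $k=1$ (giving $O(1/n^2)$) but fails for $k\geq 2$: $|u^*X^kv|^4$ is a degree-$4k$ polynomial in the matrix entries, and terms such as $|x_{ii}|^{4k}$ from loop paths require a $4k$-th entry moment, which the hypothesis does not supply (the expectation may even be infinite). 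What \cite{tao} actually does is truncate the entries to size $O(n^{1/2-c})$ (legitimate under bounded fourth moment via Borel--Cantelli on the entries themselves) and then apply a Talagrand-type concentration inequality to the resolvent bilinear form, yielding exponentially small --- hence summable --- tails. This concentration-after-truncation step is the missing ingredient; the pure Markov route you indicate does not close. A secondary, easily repaired issue: your ``limit'' $F(z)=\det(I_r-z^{-1}C_n^*B_n)$ still depends on $n$, so Hurwitz's theorem does not directly apply; one should run Rouch\'e against the varying comparison $F_n'(z)$, noting that on compact subsets of $\{|z|>1\}$ the factor contributed by eigenvalues of $A_n$ with modulus $\leq 1$ is bounded below uniformly in $n$, so the zero count near each $\theta\in\Theta$ stabilizes at $m_\theta$.
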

\begin{figure}
    \centering
    \includegraphics[scale=0.5]{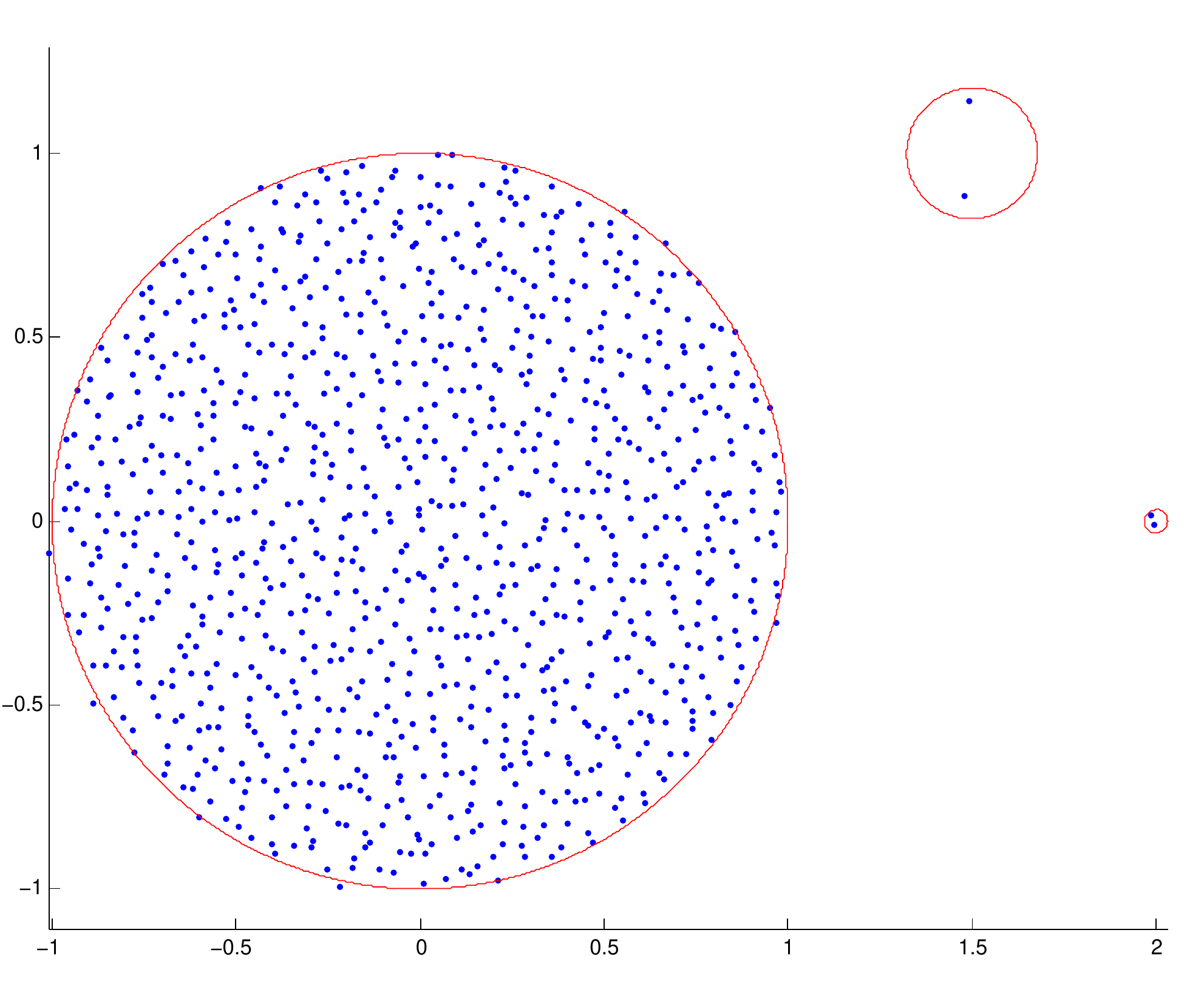}
    \caption{Eigenvalues of $X/\sqrt{n}+A$ with $X$ having iid $\mathcal{N}(0,1)_{\C}$ entries, $A=2I_2\oplus J_{1.5+i,2}\oplus 0_{996}$ and $n=1000$. The smaller circles are of radii $n^{-1/2}$ and $n^{-1/4}$.}
    \label{macrogauss}
\end{figure}
 
To illustrate Theorem~\ref{taothm}, in Figure~\ref{macrogauss} we have plotted the eigenvalues of a perturbed Gaussian matrix $X/\sqrt{n}+A$, with $x$ having distribution $\mathcal{N}(0,1)_{\C}$ and $n=1000$. The two outliers near $2$ correspond to the block $\left(\begin{smallmatrix}2&0\\0&2\end{smallmatrix}\right)$ and the two outliers near $1.5+i$ are from the block $\left(\begin{smallmatrix}1.5+i&1\\0&1.5+i\end{smallmatrix}\right)$ of $A$. Observe that the fluctuations from the Jordan block are larger; this phenomenon will be discussed later.

\subsection{Model and statement of results}
The focus of our paper is the fluctuations $\lam-\theta$. More precisely, we obtain the limiting distribution of the normalized fluctuations when $A$ is allowed to have arbitrary Jordan type and under certain sparsitiy and uniformity assumptions on the (left and right) eigenvectors of $A$. After introducing the main definition and theorem in this subsection, we will discuss simpler special cases in Subsection~\ref{ssec-examples}.

We now define the perturbation matrices we will consider in this paper, along with associated notation. To unify notation in this paper, for any complex vector $z$, we let \eql{\label{conjdef}z^{(d)}:=\left\{
     \begin{array}{lr}
       z & : d=0\\
       \conj{z} & : d=1
     \end{array}
   \right.} where $\conj{z}$ denotes the (componentwise) conjugate of $z$. We will write $z^T$ for the transpose of $z$ and $z^*$ for the conjugate transpose of $z$.   
\begin{defn}
A \emph{perturbation matrix} $A=(A_n)_{n\geq 1}$ is a sequence of (complex) $n\times n$ matrices with rank $O(1)$ and operator norm $O(1)$. For $\theta\in\Theta=\{\theta\in\Lambda(A_n):|\theta|>1\}$, let $J_{\theta}$ be the Jordan block in the Jordan decomposition of $A$ corresponding to $\theta$ with blocks written in nonincreasing order. We will assume that $\Theta$ and $(J_{\theta})_{\theta\in\Theta}$ are independent of $n$ for $n$ sufficiently large. Let
\eq{J_{\theta}=\bigoplus_{k=1}^{K_{\theta}}J_{\theta,k}^{m_{\theta,k}} \text{ where } 
J_{\theta,k}:=\left(\begin{smallmatrix}\theta&1&&\\&\theta&1&\\&& \ddots&1\\&&&\theta\end{smallmatrix}\right)} is the Jordan block of size $k$ occuring with multiplicity $m_{\theta,k}$ in $J_{\theta}$. To index the eigenvectors and generalized eigenvectors, we introduce the following notation. Let
\eq{I:=\{s=(i,j,k,\theta):i\in[k],j\in[m_{\theta,k}],k\in[K_{\theta}],\theta\in\Theta\}} and for $s\in I$, we write $s=(i_s,j_s,k_s,\theta_s)$. Let
\eq{I_{\theta}=\{s\in I:\theta_s=\theta\}.} For fixed $j$, $k$ and $\theta$, let $(v_s)_{i=1}^k$ be the generalized eigenvectors corresponding to the $j$th block of $J_{\theta,k}$, and let $v_{1,j,k}$ be the eigenvector for that block. Similarly define $(u^*_{s})_{i=1}^k$ to be the generalized left eigenvectors with the $u^*_{(k,j,k)}$'s being the left eigenvectors. To index the left and right eigenvectors, we let 
\eq{I^{\theta}_u:=\{s\in I_{\theta}:i_s=k_s\}} and
\eq{I^{\theta}_v:=\{t\in I_{\theta}:i_t=1\}.} Finally, we let
\eq{I_2:=\bigcup_{\theta\in\Theta}I^{\theta}_u\times I^{\theta}_v\times\{\theta\}} and for $r\in I_2$, we write $r=(s_r,t_r,\theta_r)$.

For $(s_i,t_i,\theta_i)\in I_2$, $i=1,2$, we assume that the limits of the following inner products exist and define, for $d_1, d_2\in \{0,1\}$, the scalars
\eql{\label{lim1}U^{(d_1),(d_2)}_{s_1,s_2}:=\lim_{n\to\infty}(u_{s_1})^{(d_1)*}\conj{(u_{s_2})}^{(d_2)},}
\eql{\label{lim2}V^{(d_1),(d_2)}_{t_1,t_2}:=\lim_{n\to\infty}(v_{t_1})^{(d_1)T}(v_{t_2})^{(d_2)}.} We also assume the following convergence and define $(G_r)_{r\in I_2}$ by
\eql{\label{grdef}(u_{s_r}^*Xv_{t_r})_{r\in I_2}\Rightarrow (G_r)_{r\in I_2}.} Lastly, we require the following technical assumption. Fix $\delta>0$ and let 
\eq{L=\bigcup_{r\in I_2}\{(i,j)\in[n]^2:|u_{s_r,i}v_{t_r,j}|\geq n^{-1/4+\delta}\}.} Then we assume 
\eql{\label{technical}\left(\sum_{(i,j)\in L}u_{s_r,i}x_{ij}v_{t_r,j}\right)_{r\in I_2}\Rightarrow (G^L_r)_{r\in I_2}.}
\end{defn}

\begin{rem}\label{rem-conv2}
The eigenvectors satisfying the convergence criteria of ~\eqref{lim1}-~\eqref{technical} are quite general, and are allowed to be of \emph{local}, \emph{delocal} and \emph{mixed} types (see Remark~\ref{rem-conv}). These eigenvector requirements are similar to those of \cite{ky1} and \cite{ky2}.
\end{rem} 

We denote the Schur complement of $A$ in the block matrix $\left(\begin{smallmatrix}A&B\\C&D\end{smallmatrix}\right)$ by \eq{\SC(A,\left(\begin{smallmatrix}A&B\\C&D\end{smallmatrix}\right)):=D-CA^{-1}B.} Recalling the notation of Theorem~\ref{taothm}, we denote the elements of $\Lambda^{\theta}$ by $\lam^{\theta}_s$ for $s\in I_{\theta}$. 
We now state our main theorem.
\begin{thm}\label{delocthm}
Let $X$ be an iid matrix and $A$ a perturbation matrix. We will assume the moment hypothesis
$\E|x|^m<\infty$, with $m$ defined as follows. First define $c$ through 
\eql{\label{cdef}c=\sup\{c'\geq 0:\max_{i\in[p]}\|u_i\|_{\infty}\|v_i\|_{\infty}\ll n^{-c'}\}.} Then fix $\eps>0$ and set \eql{\label{momhyp}m=\min(\max(2/c,4),8)+\eps.}
Recalling ~\eqref{grdef}, ~\eqref{lim1} and ~\eqref{lim2}, we define the random variables $(F_r)_{r\in I_2}$ by 
\eql{\label{Ftheta}F_r:=G_r+g_r,} where
$(g_r)_{r\in I_2}$ is a collection of centered complex Gaussians independent of $(G_r)_{r\in I_2}$ with mixed second moments specified by
\eql{\label{gtheta}\E g^{(d_1)}_{r_1}g^{(d_2)}_{r_2}=
\frac{(\E x^{(d_1)}x^{(d_2)})^2}{\theta_{r_1}\theta_{r_2}-\E x^{(d_1)}x^{(d_2)}}U^{(d_1),(d_2)}_{s_{r_1},s_{r_2}}V^{(d_1),(d_2)}_{t_{r_1},t_{r_2}}.} For $\theta\in\Theta$, let $F^{\theta}:=(F_r)_{\theta_r=\theta}$ be the $I^{\theta}_u\times I^{\theta}_v$ matrix of random variables and for $k\in[K_{\theta}]$, let 
\eq{F^{\theta,k}:=\SC(F^{\theta}|_{\{(s,t):k_s,k_t\geq k+1\}}, F^{\theta}|_{\{(s,t):k_s,k_t\geq k\}})} be the $m_{\theta,k}\times m_{\theta,k}$ matrix that is the Schur complement of the indicated submatrices of $F^{\theta}$. Denote the eigenvalues of $F^{\theta,k}$ by $(\tilde{\lam}^{\theta}_{j,k})_{j=1}^{m_{\theta,k}}$ whose $k$th roots we denote
\eql{\label{thyfluc}\tilde{f}^{\theta}_{i,j,k}:=(\zeta^i_k(\tilde{\lam}^{\theta}_{j,k})^{1/k})_{(i,j,k)\in I_{\theta},\theta\in\Theta}} where $\zeta_k=e^{\frac{2\pi\sqrt{-1}}{k}}$. Then for each $\theta\in\Theta$, we can label the eigenvalues in $\Lambda^{\theta}$ as $(\lam^{\theta}_{i,j,k})_{(i,j,k)\in I_{\theta}}$ such that the normalized outlier fluctuations 
\eql{\label{realfluc}f^{\theta}_{i,j,k}:=n^{1/(2k)}\left(\lam^{\theta}_{i,j,k}\left(\frac{X}{\sqrt{n}}+A\right)-\theta\right)} converge to $(\tilde{f}^{\theta}_{i,j,k})_{\theta\in\Theta,(i,j,k)\in I_{\theta}}$ in the following sense. Define the subgroup $S$ of the permutation group $S_I$ by
\al{S:=&\{\pi\in S_I: \pi(s)_{\theta}=s_{\theta},\pi(s)_k=s_k\text{ and }\\
&\pi(s)_j=\pi(t)_j\Leftrightarrow s_j=t_j\text{ for all }s,t\in I\}.} Let $\BC(\C^I)^S$ denote the set of bounded continuous functions on $\C^I$ invariant under the action of $S$. Then for $f\in\BC(C^I)^S$, and writing $(\tilde{f}_l)_{l\in I}$ for ~\eqref{thyfluc} and $(f_l)_{l\in I}$ for ~\eqref{realfluc}, 
\eq{\int fd\mu_{(f_l)_{l\in I}}\rightarrow \int fd\mu_{(\tilde{f}_l)_{l\in I}}.}
 
\end{thm}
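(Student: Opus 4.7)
The plan is to reduce the eigenvalue problem for $X/\sqrt n + A$ near each outlier $\theta$ to a small determinantal equation exploiting the low rank of $A$, then to expand around $\theta$ using the Neumann series for the resolvent of $X/\sqrt n$, and finally to extract the Jordan $k$-th root fluctuation behavior via a Schur complement reduction. Writing $A = VJU^*$ with $J$ the Jordan form and $U,V$ holding biorthogonal left/right generalized eigenvector chains (so that $U^*V = I$), the Weinstein--Aronszajn identity gives, for $\lambda\notin\Lambda(X/\sqrt n)$,
\eq{\det(\lambda I - X/\sqrt{n} - A) = \det(\lambda I - X/\sqrt{n})\cdot\det(I - JM(\lambda)),\quad M(\lambda):=U^*(\lambda I - X/\sqrt{n})^{-1}V.}
By Theorem~\ref{opnorm} the first factor is almost surely nonzero in a small neighborhood of each $\theta\in\Theta$ for $n$ large, so outliers near $\theta$ are exactly the zeros of $\det(I - JM(\lambda))$. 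For $|\lambda|>1$ the Neumann expansion converges and yields $M(\lambda)=\lambda^{-1}I+\sum_{m\geq 1}\lambda^{-m-1}n^{-m/2}U^*X^m V$, where the deterministic leading term uses biorthogonality.

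Next I would show that $n^{1/2}[M(\theta)-\theta^{-1}I]$, restricted to rows and columns indexed by $I^\theta_u$ and $I^\theta_v$, converges jointly in $\theta\in\Theta$ to a Gaussian matrix whose entries (after absorbing the $\theta$-dependent prefactors from the determinantal equation below) produce the variables $(F_r)_{r\in I_2}$ of~\eqref{Ftheta}. The $m=1$ contribution $n^{-1/2}u_{s_r}^*X v_{t_r}$ converges to $G_r$ by~\eqref{grdef}; the large summands where $|u_{s_r,i}v_{t_r,j}|\geq n^{-1/4+\delta}$ are handled by the technical assumption~\eqref{technical}, and the remaining small summands are controlled by a Lindeberg-type argument under the moment hypothesis~\eqref{momhyp}. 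The higher Neumann terms ($m\geq 2$) are multilinear forms in $X$ whose contributions to $n^{1/2}M(\theta)$ are also $O(1)$ (each factor of $X^m$ adds $m-1$ internal index sums that absorb the $n^{-(m-1)/2}$ prefactor); their second moments, computed by index matching and summed as a geometric series in $m$, reproduce the covariance formula~\eqref{gtheta} involving $U^{(d_1),(d_2)}_{s_1,s_2}V^{(d_1),(d_2)}_{t_1,t_2}$ from~\eqref{lim1}--\eqref{lim2}, and so identify the additional independent Gaussian~$g_r$.

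With this Gaussian asymptotic in hand, writing $J_\theta = \theta I + N_\theta$ on the $I_\theta$-block with $N_\theta$ the nilpotent shift, the restriction of $I - J_\theta M(\lambda)$ to $I_\theta\times I_\theta$ equals the deterministic matrix $\lambda^{-1}\bigl[(\lambda-\theta)I - N_\theta\bigr]$ up to a random Gaussian perturbation of size $O(n^{-1/2})$ whose restriction to $I^\theta_u\times I^\theta_v$ converges to $F^\theta$. For the block size $k$ of interest, set $\lambda = \theta + n^{-1/(2k)}\epsilon$ and perform an iterated Schur complement reduction that successively eliminates the degrees of freedom associated to blocks of size strictly greater than $k$; the determinantal equation then reduces at leading order to $\det\bigl(\epsilon^k I_{m_{\theta,k}} - F^{\theta,k}\bigr) = 0$. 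This produces the description~\eqref{thyfluc}, with the $k$-fold choice of root and the permutation of identical blocks absorbed into the symmetry group $S$. The main obstacle is this Schur-complement step: tracking the nilpotent couplings between blocks of different sizes in $\det(I - J_\theta M(\lambda))$ and verifying that the iterated reduction isolates exactly the $m_{\theta,k}\times m_{\theta,k}$ limit for each $k$, uniformly in $\lambda$ on the scale $|\lambda-\theta|=O(n^{-1/(2k)})$. A secondary difficulty is the joint CLT for the bilinear forms $u_s^*X^m v_t$ under the mixed (local/delocal) eigenvector hypotheses, requiring a careful combination of~\eqref{grdef}--\eqref{technical} with the moment bound~\eqref{momhyp}.
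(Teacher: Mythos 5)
Your outline tracks the paper's architecture closely: both proofs reduce the eigenvalue equation to a small determinantal equation via $\det(I+AB)=\det(I+BA)$ (your Weinstein--Aronszajn form is the same identity), both expand the resolvent in a Neumann series to identify the reduced matrix and its Gaussian limit, and both extract the Jordan-block scaling by a Schur-complement reduction applied to the matrix of ``corner'' entries $u_s^*S_\lambda v_t$ with $s\in I^\theta_u$, $t\in I^\theta_v$. The paper's one cosmetic difference is that it phrases the reduced equation as $\det(-\lambda+J+\tfrac{1}{\sqrt{n}}U^*S_\lambda V J)=0$ and observes $J_\theta e_t=\theta e_t$ to rewrite the perturbation block as $\tfrac{\theta}{\sqrt{n}}u_s^*S_\lambda v_t$, then applies a single deterministic proposition to it.

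However, what you flag as your ``main obstacle'' and ``secondary difficulty'' are precisely the two ingredients the paper supplies, and as stated they are genuine gaps in the proposal. The iterated Schur-complement reduction across Jordan blocks of different sizes is not something one needs to reinvent: it is the Lidskii--Vishik--Lyusternik perturbation theorem, which the paper records as Proposition~\ref{detpert} in Appendix~\ref{secdet} and simply invokes with $M=J_\theta$ and $P=\tfrac{1}{\sqrt{n}}U_\theta^*S_\lambda V_\theta J_\theta$; you would either cite this or re-derive it, and the derivation is nontrivial (one must carefully set up the conformal block decomposition of $P$, show that only the lower-left corner entries matter to leading order, and justify the successive Schur complements $F_k$). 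The joint CLT for the bilinear statistics $\sqrt{n}\,u_s^*\!\left(\tfrac{X}{\sqrt{n}}\right)^{\!m}\!v_t$ across mixed local/delocal eigenvectors, with control of the full Neumann tail and under the stated minimal moment hypothesis, is the paper's Proposition~\ref{myclt} and Lemma~\ref{Sclt}, proved via a moment method with a fairly delicate graph-counting argument (Lemma~\ref{mainlem}, Lemma~\ref{pathlemma}) plus a truncation step (Appendix~\ref{sec4eps}); a ``Lindeberg-type argument'' alone would not give the stated $\min(\max(2/c,4),8)+\eps$ moment requirement, and handling the $m\ge 2$ terms requires showing their joint asymptotic independence from the $m=1$ term (which itself need not be Gaussian). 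So while your scaffolding is correct and matches the paper's, the proof as proposed leaves open exactly the parts that required the paper's work.
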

\begin{rem}
The moment hypothesis we require seems to be a technical limitation of the moment method that we have employed. While we need at most $8+\eps$ moments in all cases, we conjecture that $4$ moments always suffice. In the \emph{delocal} case with $c=1$ (i.e., $\|u_i\|_{\infty},\|v_i\|_{\infty}\ll 1/\sqrt{n}$, we require $4+\eps$ moments which almost matches the conjectured optimal. On the other hand, under the assumption of $4$ moments, \cite{bc} obtains the fluctuations of certain types of \emph{local} matrices (with $c=0$) as described in the next subsection.  
\end{rem}

\subsection{Discussion and related works}\label{ssec-examples}

We now provide examples of different types of behavior for the fluctuations that illustrate Theorem~\ref{delocthm}. The first two examples are of rank $1$ fluctuations.
\begin{enumerate}[label=(\roman*)]
\item If $A$ is has a single non zero entry $\theta$ in the top left with $|\theta|>1$, the limiting normalized  fluctuation of the outlier is the law of $x+g$ where $x$ is the atom distribution and $g$ is a centered complex Gaussian with $\E g^2=0$ and $\E|g|^2=\mathcal{N}(0,\frac{1}{|\theta|^2-1})_{\C}$. In Figure ~\ref{unifgausscomp}, we demonstrate this non-universality in the case $\theta=2$ and $x$ as specified in the captions. 

\item If $A=\theta vu^*$ is of rank $1$ with $|\theta|>1$ and $\|u\|_{\infty}\|v\|_{\infty}=o(1)$, then the normalized fluctuation $\sqrt{n}(\lam-\theta)$ converges to the law of a centered complex Gaussian $g_{\theta}$ with 
\eq{\E g^2_{\theta}=\frac{|\theta|^2\E x^2}{|\theta|^2-\E x^2}\lim_{n\rightarrow\infty}u^*\conj{u}v^Tv} and
\eq{\E |g_{\theta}|^2=\frac{|\theta|^2}{|\theta|^2-1}\lim_{n\rightarrow\infty}u^*uv^*v.} In particular, if $\E x^2=0$ and $A$ is normal (thus $u$ and $v$ are unit vectors), then $g_{\theta}$ is a circularly symmetric Gaussian with variance $\frac{|\theta|^2}{|\theta|^2-1}$.

\item Suppose $A = UDU^*$ is normal of rank $k$, with $\|u_i\|_{\infty}=o(1)$ for $i=1,2,\ldots,k$. For a fixed eigenvalue $\theta\in\Theta$ of multiplicity $m$, the covariance formula ~\eqref{gtheta} reduces to
  \eq{\E g_{ab}g_{cd}=\frac{\theta^2\E x^2}{\theta^2-\E x^2}\lim_{n\rightarrow\infty}u^*_a\conj{u_c}{u^T_bu_d}}
and
  \al{\E g_{ab}\conj{g_{cd}}&=\frac{\theta^2}{\theta^2-1}\lim_{n\rightarrow\infty}u^*_au_cu^*_bu_c\\
	&=\frac{\theta^2}{\theta^2-1}\delta_{ac}\delta_{bd}}
	
Note that fluctuations of different eigenvalues are still correlated in general. We obtain asymptotically independent fluctuations for distinct eigenvalues in the following cases.
\begin{enumerate}
\item If $A$ is real, $u^*_a\conj{u_c}=\delta_{ac}$, ${u^T_bu_d}=\delta_{bd}$ and the entries of $F^{\theta}=(g_{ab})_{a,b=1}^m$ are independent Gaussians. Depending on the Jordan structure $J_{\theta}$, the normalized fluctuations converge to the appropriate roots of eigenvalues of Schur complements of submatrices of $F^{\theta}$ as specified in Theorem~\ref{delocthm}. 

\item If $\E x^2=0$, $F^{\theta}=(g_{ab})_{a,b=1}^m$ is a scaled complex Ginibre ensemble with atom distribution $g$ satisfying $\E g=0$, $\E g^2=0$ and $\E |g|^2=\frac{|\theta^2|}{|\theta^2|-1}$. If we now suppose further that $J_{\theta}=\theta I_m$, then the $m$ fluctuations associated to $\theta$ are given by the eigenvalues of the complex Ginibre ensemble specified above. By the circular law, they lie approximately uniformly in a disk of radius $\frac{|\theta|}{(|\theta|^2-1)^{\frac{1}{2}}}$ for $m$ large.  

\item So far, the fluctuations have been of order $O(\frac{1}{\sqrt{n}})$. Suppose again that $\E x^2=0$ but that $J_{\theta}$ is a single Jordan block of size $m$. Then as remarked below Proposition ~\ref{detpert}, the $m$ fluctuations scaled by $n^{1/(2m)}$ are given by $(e^{2\pi ij/m}g^{1/m}_{\theta})_{j=0}^{m-1}$ where $g_{\theta}=(F^{\theta})_{m1}$ is the lower left entry of $F^{\theta}$. Hence the fluctuations are distributed uniformly around a circle of radius $n^{-1/(2r)}g^{1/m}_{\theta}$. This dependence of the rate of convergence on the size of the Jordan block is illustrated by the outliers in Figure~\ref{macrogauss}. 

\end{enumerate} 

\end{enumerate}

\begin{figure}
\centering
\adjustbox{valign=t}{
\begin{minipage}{.5\textwidth}
  \centering
  \includegraphics[width=\linewidth]{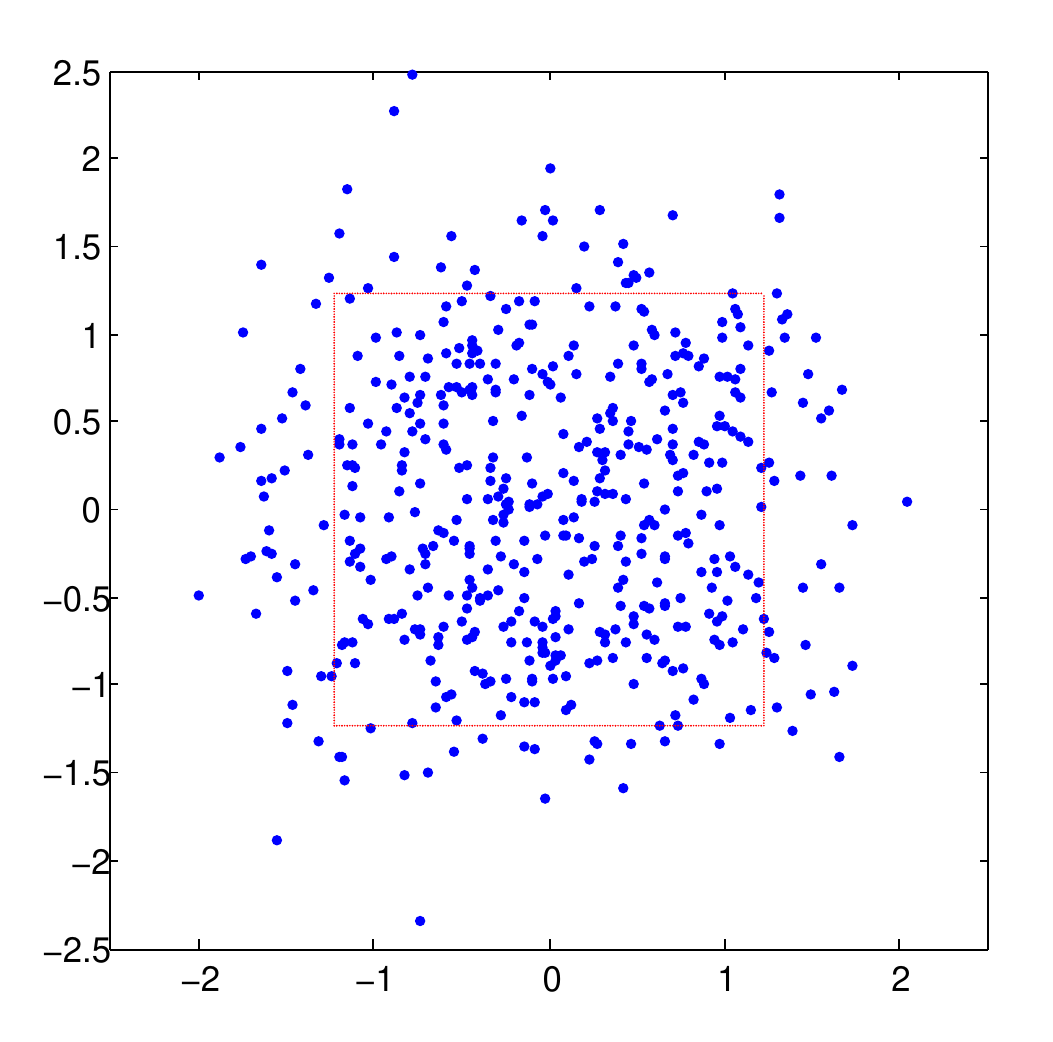}
	\caption*{(2a)}
  \label{fig-unif1}
	\includegraphics[width=\linewidth]{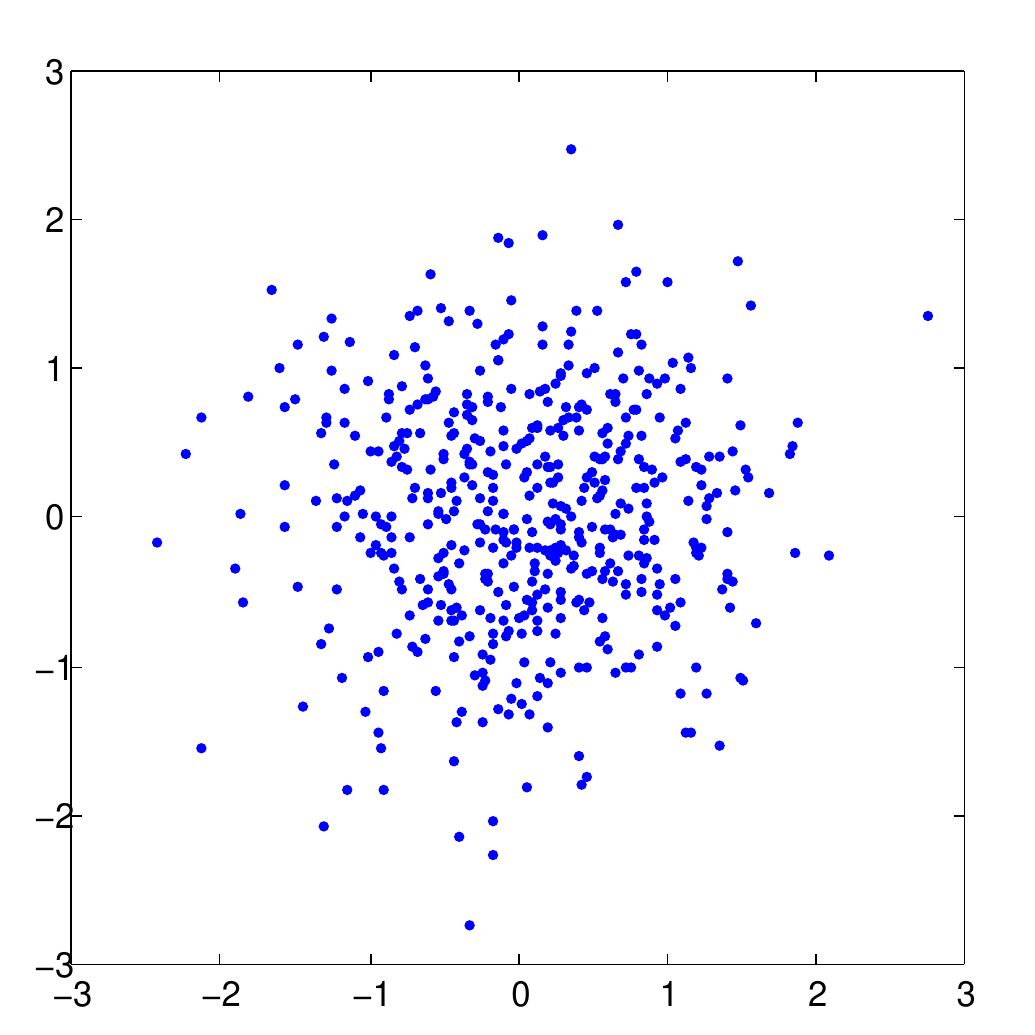}
	\caption*{(2c)}
  \label{fig-gauss1}
\end{minipage}%
}%
\adjustbox{valign=t}{
\begin{minipage}{.5\textwidth}
  \centering
  \includegraphics[width=\linewidth]{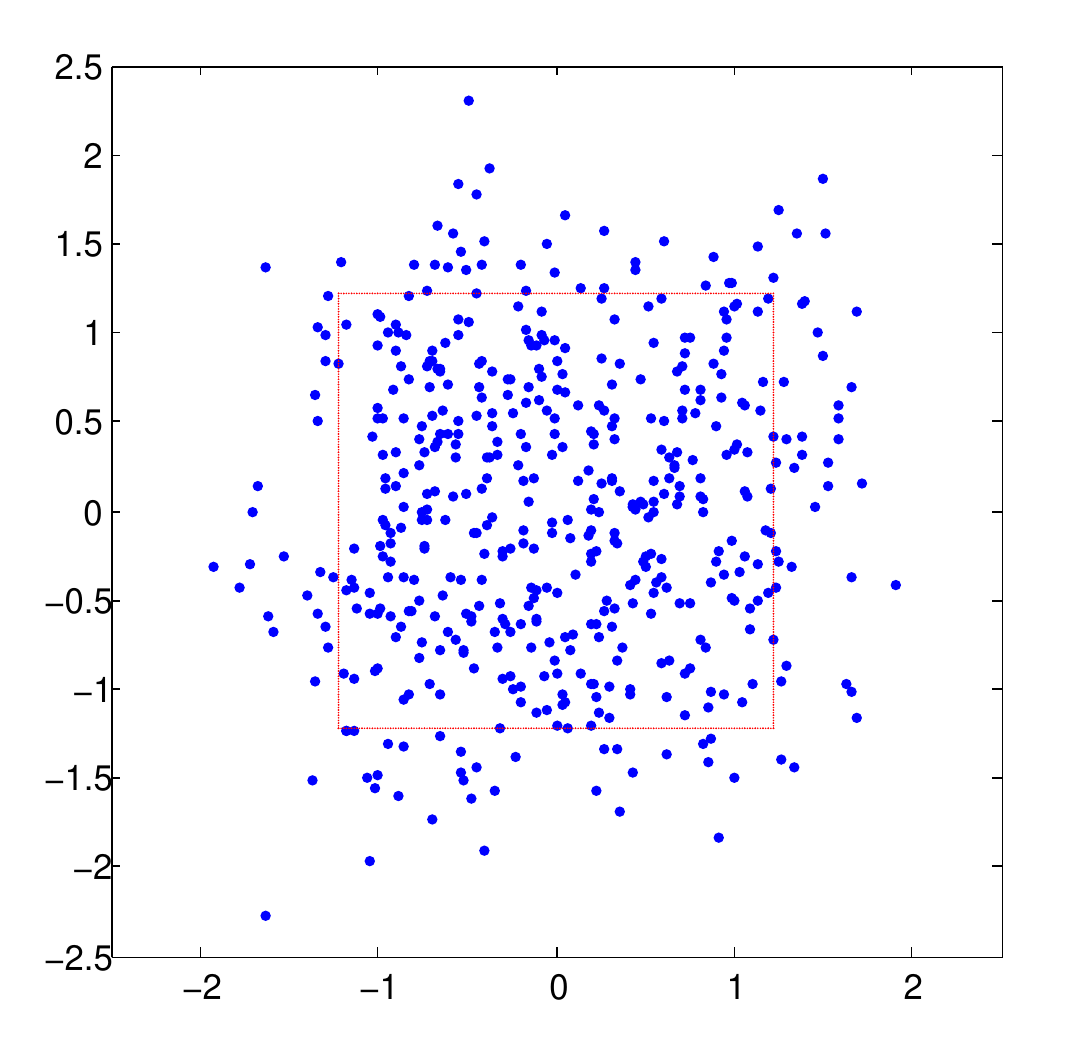}
	\caption*{(2b)}
  \label{fig-unif2}
	\vspace{2.5mm}

	\includegraphics[width=\linewidth]{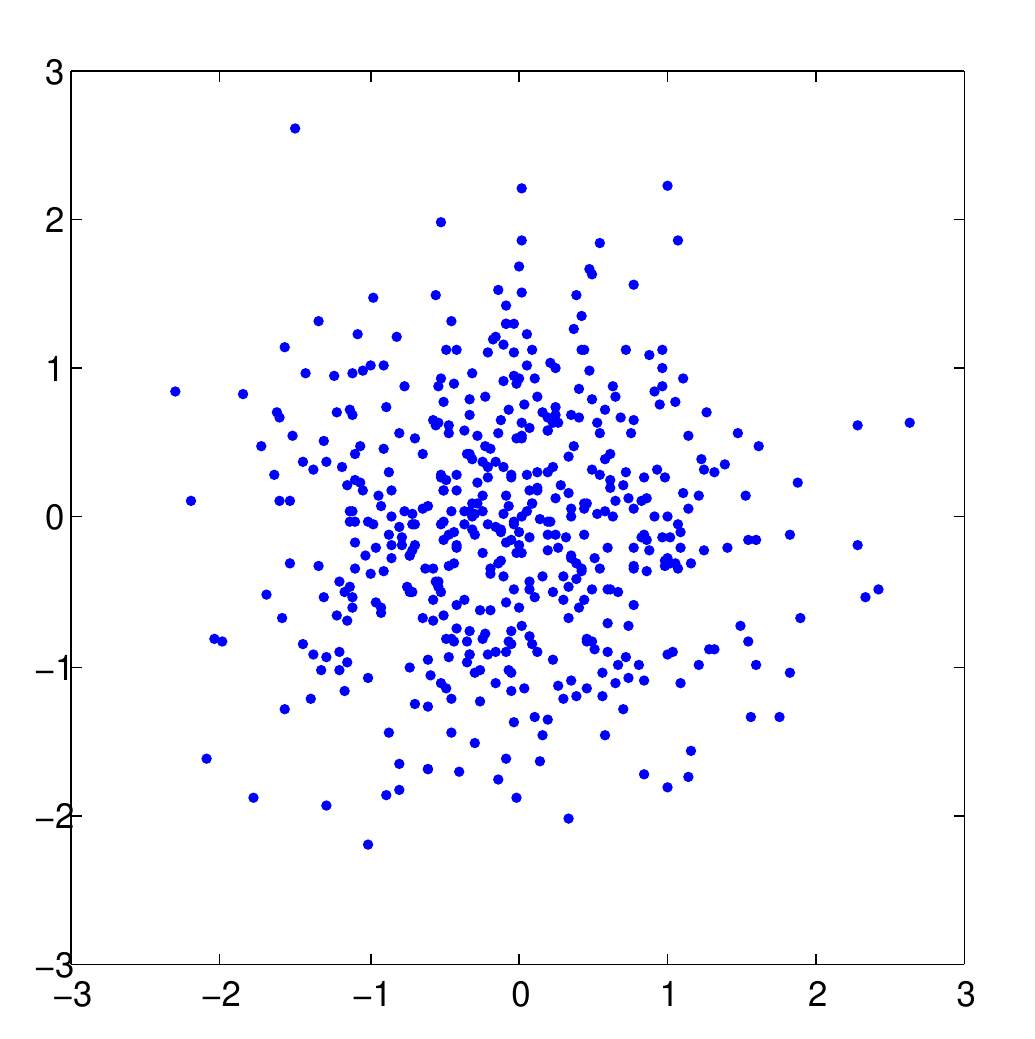}
		\caption*{(2d)}
  \label{fig-gauss2}
\end{minipage}
}

\caption{ Figures $2$a and $2$c are $500$ samples of the normalized fluctuations $\sqrt{n}(\lam_{out}(\frac{X}{\sqrt{n}}+A)-2)$ of a single outlier with $n=100$ and $A$ given by $a_{i,j}=2\delta_{(i,j)=(1,1)}$. In Figure $2$a, the atom distribution $x$ is distributed uniformly over the square $[-l,l]^2\subset\C$ with $l=\sqrt{3/2}$ so that $\E|x|^2=1$ (outlined in figure). In Figure $2$c, $x$ is the standard complex normal $\mathcal{N}(0,1)_{\C}$. Figures $2$b and $2$d are $500$ samples from the corresponding limiting distributions as predicted by Theorem~\ref{delocthm} and detailed in case (i).\label{unifgausscomp}}
\end{figure}

In \cite{rochet}, the outlier eigenvalues of perturbations of the single ring model are studied and their locations and limiting fluctuations are obtained (\cite[Theorem 2.9]{rochet}) for finite rank and finite operator norm perturbations of arbitrary Jordan type. Note that the special case of the Ginibre ensemble, which is an iid matrix, is contained in this model as well. Our approach to dealing with perturbations of various Jordan types is similar and relies on a deterministic perturbation result known as the Lidskii-Vishik-Lyusternik perturbation theorem (see \cite{lidskiiorig}, \cite{ljusternik}, \cite{lidskii} and references therein) which we have reproduced in Appendix~\ref{secdet}.

In \cite{bc}, Bordenave and Captaine study asymptotic outlier locations and fluctuations for perturbed iid matrices. The perturbations considered there are of the form $A=A'+A''$ where $A''$ is of bounded rank and $A'$ (with possibly unbounded rank) satisfies a well-conditioning property. In the case of local perturbations, where $A$ has a finite nonzero block $A''$ at the top-left, \cite[Theorems 1.7\text{ and }1.8]{bc} obtain the limiting normalized outlier fluctuation when $A''=\theta 1_{\rk(A)}$ and when $A''=J_{\theta,\rk(A)}$ under the hypothesis of bounded fourth moments.

In the case when $A''=vu^*$ is of rank $1$ and is delocalized ($\|u\|_{\infty},\|v\|_{\infty}=O(n^{-1/2})$), they show that the outliers exhibit macroscopic fluctuations and demonstrate a convergence of these fluctuations to the zeros of a Gaussian analytic function. While this phenomenon does not occur with finite rank perturbations, some techniques of the proof are similar to the ones in our proof.

In the setting of finite rank perturbations of iid matrices, when Theorem~\ref{delocthm} is specialized appropriately, our results coincide with \cite[Theorem 2.9]{rochet} for the Ginibre ensemble and with \cite[Theorems 1.7\text{ and }1.8]{bc} for local perturbations of the specified Jordan types. All other cases however, with $X$ having a non-Gaussian atom distribution and $A$ having general eigenvectors (see Remark \ref{rem-conv2}), including the delocalized cases of (ii) and (iii), do not appear to have been explicitly addressed in the literature.

The main technical result of this paper is Proposition~\ref{myclt} which we prove using the moment method. We require a bounded number of moments in all cases and are able to obtain the limiting fluctuations in a more general setting with a unified approach.

The paper is organized as follows. In Section~\ref{secprop} we prove Proposition~\ref{myclt} which characterizes the joint asymptotic distribution of certain random variables arising from powers of $X_n$ appearing in the Neumann series of $(X_n/\sqrt{n}-\lam)^{-1}$. In Section~\ref{secpaths} we prove Lemma~\ref{Sclt}, which determines the joint limiting distribution of random variables related to a normalized resolvent of $X_n$, namely of the form $\sqrt{n}u^*[(X_n/\sqrt{n}-\lam)^{-1}+\lam^{-1}]v$. Using Lemma~\ref{Sclt}, Theorem~\ref{delocthm} is proven in Section~\ref{secthm}, with the help of Proposition~\ref{detpert} from Appendix~\ref{secdet}, a deterministic perturbation result needed to understand the effect of Jordan blocks in perturbations. Appendix~\ref{sec4eps} presents the truncation argument that allows us to assume stronger hypotheses in Proposition~\ref{myclt} and Lemma~\ref{Sclt}.

\subsection{Acknowledgments}
I am indebted to my advisor, Terence Tao, for his constant guidance, support and feedback throughout the course of this work.

\subsection{Notation} In this paper, $n$ will be a parameter going to infinity and many quantities will be implicitly understood to depend on $n$. We will use the asymptotic notation $X=O(Y)$ and $X\ll Y$ to mean there is a constant $C$ independent of $n$, but possibly dependent on other parameters, such that $X\leq CY$ for sufficiently large $n$. Similarly, we write $X=\Omega(Y)$ to mean for some $C$ and sufficiently large $n$, $X\geq CY$. We write $X=o(Y)$ to mean $\lim_{n\to\infty} X/Y\to 0$. For a sequence of events $E=E_n$, we say $E$ occurs with high probability (w.h.p.) if $\P(E_n)=1-o(1)$ and with overwhelming probability if $1-\P(E_n)\ll n^{-c}$ for all $c>0$. We will use $\Rightarrow$ to denote convergence in distribution (and occasionally to denote implication) and finally, we write $[k]$ for $\{1,2,\ldots,k\}$.

\section{A central limit theorem~\ref{myclt}}\label{secprop}

To obtain the limiting fluctuations of the outliers in Theorem ~\ref{delocthm}, we will have to derive the joint asymptotic distributions for certain bilinear averages of the recentered and normalized resolvent, namely for 
\eql{\label{slamdef}S_{\lam}^{u,v}:=-\lam\sqrt{n}u^*((X/\sqrt{n}-\lam)^{-1}+\lam^{-1})v,} with $u$ and $v$ ranging over the generalized eigenvectors of the perturbation matrix $A$. To this end, in this section we prove Proposition~\ref{myclt} which obtains the limiting joint distribution for a bounded number of terms of the Neumann series of ~\eqref{slamdef}. In Lemma~\ref{Sclt} we will control the tail of ~\eqref{slamdef}, thus obtaining its limiting distribution.

Recall the notation introduced in ~\eqref{conjdef} which we reproduce here for convenience. For any complex vector $z$, we let \eq{z^{(d)}:=\left\{
     \begin{array}{lr}
       z & : d=0\\
       \conj{z} & : d=1
     \end{array}
   \right..} 
For $S\subset[n]\times [n]$, we define $X_S=(X^S_{ij})$ through
\eq{X^S_{ij}=\delta_{(i,j)\in S}x_{ij}.}
\begin{prop}\label{myclt} Let $X$ be an iid matrix and $(u_i,v_i)_{i=1}^p=(u^{(n)}_i,v^{(n)}_i)_{i=1}^p$ be a sequence of vectors in $\C^n$. We assume the hypotheses of Theorem~\ref{delocthm} with $(u_i,v_i)_{i=1}^p$ in the place of $(u_{s_r},v_{t_r})_{r\in I_2}$. Thus, in the place of ~\eqref{lim1} and ~\eqref{lim2}, we assume the following limits and define the scalars

\eql{\label{lim1new}C^{(d_1),(d_2)}_{i_1,i_2}:=\lim_{n\to\infty}(u_{i_1})^{(d_1)*}\conj{(u_{i_2})}^{(d_2)}(v_{i_1})^{(d_1)T}(v_{i_2})^{(d_2)}.}

We will assume $\E|x|^m<\infty$ with $m$ defined via ~\eqref{cdef} and ~\eqref{momhyp}.

Define \eq{Z_{i,j}=Z_{i,j}^{(n)}:=\sqrt{n}u^*_i\left(\frac{X_n}{\sqrt{n}}\right)^jv_i} where we have suppressed the $n$ dependence for $X_n$, $u^{(n)}_i$ and $v^{(n)}_i$. Also, for 
\eq{L:=\bigcup_{i\in[p]}\{(k,l)\in [n]\times [n]:|u_{i,k}v_{i,l}|\geq n^{-1/4+\delta}\}} and $L^c:=([n]\times [n])\backslash L$, define 
\eq{Z^L_{i,j}:=\sqrt{n}u^*_i\left(\frac{X_L}{\sqrt{n}}\right)^jv_i} and
\eq{Z^{L^c}_{i,j}:=\sqrt{n}u^*_i\left(\frac{X_{L^c}}{\sqrt{n}}\right)^jv_i.}

For $j=1$, we will assume that the following joint convergences in distribution and define the independent families $(G^L_{i,1})_{i=1}^p$ and $(G^{L^c}_{i,1})_{i=1}^p$ through
\eq{(Z^L_{i,1})_{i=1}^p\Rightarrow (G^L_{i,1})_{i=1}^p} and
\eq{(Z^{L^c}_{i,1})_{i=1}^p\Rightarrow (G^{L^c}_{i,1})_{i=1}^p.} 
Also define $G_{i,1}:=G^L_{i,1}+G^{L^c}_{i,1}$ so that
\eql{\label{G1def}(Z_{i,1})_{i=1}^p\Rightarrow (G_{i,1})_{i=1}^p.} Then for any fixed $m\geq 1$, the $pm$ random variables $(Z_{i,j})_{i=1,j=1}^{p,m}$ converge jointly in distribution to the law of random variables $(G_{i,j})_{i=1,j=1}^{p,m}$ with $(G_{i,j})_{i=1,j=2}^{p,m}$ specified by
\begin{enumerate}[label=(\roman*)]
\item The $G_{i,j}$'s are centered complex Gaussians for $j\geq 2$ with mixed second moments given by 
\eql{\E G^{(d_1)}_{i_1,j}G^{(d_2)}_{i_2,k}=\delta_{jk}(\E x^{(d_1)}x^{(d_2)})^jC^{(d_1),(d_2)}_{i_1,i_2}\label{covariance}. }
\item The collections of random variables $(G_{i,1})_{i=1}^p$ and $(G_{i,j})_{i=1,j=2}^{p,m}$ are independent.
\end{enumerate}

Note in particular that for $j\neq k$, $Z_{i_1,j}$ and $Z_{i_2,k}$ are asymptotically independent.

\end{prop}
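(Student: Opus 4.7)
\emph{Proof plan.} The approach is the method of moments. Tightness of $\{Z_{i,j}\}$ follows from routine variance estimates under the moment hypothesis, so it suffices to show that every joint mixed moment
\[
\E\prod_{l=1}^N (Z_{i_l,j_l})^{(d_l)}
\]
converges, as $n\to\infty$, to the corresponding mixed moment of the candidate limit $(G_{i,j})$. We first invoke the truncation argument of Appendix~\ref{sec4eps} to reduce to the case in which $X$ has moments of all orders, which lets the combinatorial expansion below proceed without integrability concerns.

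Expand each factor as a sum over length-$j_l$ walks $(k_0,\ldots,k_{j_l})\in[n]^{j_l+1}$ with endpoint weights coming from entries of $u_{i_l}$ and $v_{i_l}$ (with conjugations determined by $d_l$) and edges contributing $(x_{k_tk_{t+1}})^{(d_l)}$. The mixed moment then becomes a sum over $N$-tuples of walks, weighted by products of endpoint entries and by the expectation of a product of $x$-entries. Since $\E x=0$, this expectation vanishes unless every distinct edge $(a,b)\in[n]^2$ of the combined multigraph appears at least twice. A standard vertex-counting bound, controlled by (\ref{momhyp}) and the scaling parameter $c$ of (\ref{cdef}), shows that configurations in which some edge appears with multiplicity $\ge 3$ contribute $o(1)$, so one may restrict to \emph{pair partitions} of the $\sum_l j_l$ edges. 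A second vertex count shows that the only pair partitions contributing at leading order $n^0$ are the \emph{matched} ones, in which the $N$ walks group into $N/2$ pairs of walks of equal length $j$ whose edge-sequences coincide (each matched pair producing a factor $\E x^{(d_1)}x^{(d_2)}$); any deviation collapses at least one free intermediate vertex and loses a factor of $n$.

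The matched-pair structure has two immediate consequences: walks of length $1$ pair only with walks of length $1$, and walks of length $j\ge 2$ pair only with walks of length $j$. The joint moment therefore factorizes asymptotically as
\[
\E\prod_{l=1}^N (Z_{i_l,j_l})^{(d_l)}=\Bigl(\E\!\!\prod_{l:\,j_l=1}(Z_{i_l,1})^{(d_l)}\Bigr)\prod_{j\ge 2}\Bigl(\E\!\!\prod_{l:\,j_l=j}(Z_{i_l,j})^{(d_l)}\Bigr)+o(1).
\]
The first factor converges to the corresponding moment of $(G_{i,1})$ by hypothesis (\ref{G1def}) together with moment control. Each factor in the product, summed over matched coincident pairs and free over the $j-1$ intermediate vertices (which precisely cancel the $n^{1-j}$ normalization contributed by each walk), reduces to the Wick expansion for a centered Gaussian with covariance (\ref{covariance}): the Kronecker $\delta_{jk}$ reflects the equal-length matching, the $(\E x^{(d_1)}x^{(d_2)})^j$ factor comes from the $j$ matched edges, and $C^{(d_1),(d_2)}_{i_1,i_2}$ arises as the limit (\ref{lim1new}) of the inner products at the two free endpoints of each coincident pair.

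The main obstacle is the vertex-counting step that rules out non-matched pair partitions under only the moment hypothesis (\ref{momhyp}). The auxiliary decomposition $X=X_L+X_{L^c}$ and the variables $Z^L_{i,1},Z^{L^c}_{i,1}$ enter here: the set $L$, whose cardinality satisfies $|L|\ll n^{1/2-2\delta}$ by Chebyshev applied to $\sum|u_{i,k}|^2|v_{i,l}|^2=O(1)$, isolates the rare edges where $|u_{i,k}v_{i,l}|$ is not $o(n^{-1/4})$ and a classical CLT would be inapplicable. Gaussianity of the $L^c$-part follows from a Lindeberg-type CLT, while the non-Gaussian contribution from $L$ in the $j=1$ marginal is supplied by the assumed convergence of $Z^L_{i,1}$. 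For $j\ge 2$ the $L$-part of $Z_{i,j}$ is $o(1)$ in probability by the smallness of $|L|$ combined with the infinity-norm control on $u_i,v_i$ encoded in $c$, so the Gaussian limit comes purely from the $L^c$-part and the matched-pair analysis gives the claimed limiting covariance.
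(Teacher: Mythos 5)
Your overall framework --- method of moments, pair-partition/vertex-count reduction, complex Wick expansion, and the $L/L^c$ split --- is the same as the paper's, but there is a genuine gap in how you integrate the $j=1$ variables into the moment argument. You write the joint mixed moment as a product over $j$ and then claim that the $j=1$ factor ``converges to the corresponding moment of $(G_{i,1})$ by hypothesis~\eqref{G1def} together with moment control.'' This step does not go through. Hypothesis~\eqref{G1def} is convergence in distribution; it does not give convergence of mixed moments, and in fact the moments of $G_{i,1}$ need not exist at all under the assumed hypothesis (e.g.\ in the local case of Remark~\ref{rem-conv}(i), $G_{i,1}$ is a finite linear combination of the $x_{ij}$'s and has no more moments than $x$ does). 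More to the point, the paper explicitly observes that ``the moment hypothesis decays when the random variables $(Z^L_{i,1})$ are dealt with using the moment method; thus we deal with them separately'': the large weights $|u_{i,k}v_{i,l}|\ge n^{-1/4+\delta}$ on $L$ break the vertex-counting bound that rules out high-multiplicity edges under \eqref{momhyp}, so one cannot run the combinatorial expansion with the full $Z_{i,1}$. This is why the paper's $\mathcal F'$ is defined so that length-$1$ walks are constrained to $L^c$.

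The fix, and what the paper actually does, is a two-step structure. Step~1 is \emph{not} a moment argument: one shows that for $j\ge 2$, $Z_{i,j}-n^{-(j-1)/2}u_i^*(X-X_L)^j v_i\to_P 0$ (using the operator-norm bound $\|X_L\|\ll(\log n)n^{1/4-\delta}$ from Lemma~\ref{lemnet} and the variance estimate of Lemma~\ref{lemtao}), so that $Z^L_{i,1}$ depends on matrix entries disjoint from the rest and a Cram\'er--Wold/Slutsky argument (Lemma~\ref{lemprob}) gives asymptotic independence of $(Z^L_{i,1})_i$ from $(Z^{L^c}_{i,1})_i\cup(Z_{i,j})_{j\ge 2}$. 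Only then does Step~2 apply the moment method, and only to $(Z^{L^c}_{i,1})_i\cup(Z_{i,j})_{j\ge 2}$, where the $L^c$ restriction on the $j=1$ terms makes the weights small enough for the pair-matching count to close. Your proposal compresses this into a single moment computation and invokes the distributional hypothesis inside a moment identity, which is exactly the circularity the two-step structure is designed to avoid. A minor secondary point: the truncation of Appendix~\ref{sec4eps} reduces to $|x|\le K=o(n^M)$ with $M=2/m$, not to ``all moments''; the growth of $K$ must still be tracked in the vertex count, which is where \eqref{momhyp} actually enters.
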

\begin{rem}
We note that the case $p=1$ and $c=1$ is a generalization of \cite[Section $4$]{tao} to the complex case with weaker moment assumptions, and is a special case of \cite[Theorems 6.3, 6.4]{bc}. 
\end{rem}
\begin{rem} \label{rem-conv}
The assumption of the joint convergence of $Z^L_{i,1}$ and $Z^{L^c}_{i,1}$ is satisfied under various conditions. We describe some of these below.
\begin{enumerate}[label=(\roman*)]
\item If each $u_i$ and $v_i$ have finite support in $[C]$ independent of $n$, we have the case of a local perturbation and the $G_{i,1}$'s are finite linear combinations of the $x_{i,j}$'s.

\item If each $u_i$ and $v_i$ is uniformly delocalized in the sense that  $\|u_i\|_{\infty}= o(1)$ and $\|v_i\|_{\infty} = o(1)$ for $i\in[p]$, then by the classical central limit theorem, the $G_{i,1}$'s are joint centered complex Gaussians with mixed second moments given by
\eq{\E G^{(d_1)}_{i_1,1}G^{(d_2)}_{i_2,1}=\E x^{(d_1)}x^{(d_2)}C^{(d_1),(d_2)}_{i_1,i_2}. }

\item Each $u_i$ and $v_j$ can be allowed to have a local and a uniformly delocalized part. Namely, we suppose that for some $C$ independent of $n$ and all $i\in[p]$, $\sup_{i>C}|u_i|,\sup_{i>C}|v_i|=o(1)$. In this case, the $G_{i,1}$'s are a sum of a finite linear combination of the $x_{i,j}$'s and an independent Gaussian.

\item Finally, we mention an example that is not contained in the above cases. Let $p=1$, fix $0<r<1$ and set $u_{1,k}=v_{1,k}=r^{k}c_n$ with $c_n$ chosen such that $u^*v=\theta:=2$ say. Then $G_{1,1}$ is an infinite linear combination of the $x_{i,j}$'s with exponentially decreasing entries. \comment{This is similar to a Bernoulli convolution and can give rise to fractal type distributions for suitably chosen $r$.}
\end{enumerate}    
\end{rem}
\subsection{Proof of Proposition ~\ref{myclt}}
Instead of assuming \eqref{momhyp}, via a truncation argument presented in Appendix ~\ref{sec4eps}, it suffices to prove Proposition~\ref{myclt} under the stronger assumption that the atom distribution $x$ satisfies the bound
$|x|\leq K:=o(n^{M})$ with $M=2/m$ given by
\eql{\label{mdef}M=\max(\min(c,1/2),1/4)-\epsilon,} with $c$ defined by ~\eqref{cdef}. Furthermore, by decreasing $c$ slightly (and decreasing $\eps$), we may assume
\eq{\label{cdefnew}\max_{i\in[p]}\|u_i\|_{\infty}\|v_i\|_{\infty}\ll n^{-c}} instead. We will also assume without loss of generality that $(u_i,v_i)_{i\in[p]}$ are unit vectors. 

In step $1$, we show that $(Z^L_{i,1})_{i=1}^p$ is asymptotically independent of 
\eq{(Z^{L^c}_{i,1})_{i=1}^p\cup (Z_{i,j})_{i=1,j=2}^{p,m}.} In step $2$, we derive the joint asymptotic distribution of $(Z^{L^c}_{i,1})_{i=1}^p\cup (Z_{i,j})_{i=1,j=2}^{p,m}$. A key part of the proof is contained in Lemma ~\ref{mainlem}, whose proof we postpone to the end of this section.

Step $2$ employs the moment method which, together with the truncation method (see Appendix~\ref{sec4eps}), contributes to the moment hypothesis. The moment hypothesis decays when the random variables $(Z^L_{i,1})_{i=1}^p$ are dealt with using the moment method; thus we deal with them separately. 

We will need
\begin{lem}\label{lemprob}
Let $A^{(n)}=(A^{(n)}_1,\ldots,A^{(n)}_k)$, $B^{(n)}=(B^{(n)}_1,\ldots,B^{(n)}_k)$ and $C^{(n)}=(C^{(n)}_1,\ldots,C^{(n)}_l)$ be sequences of complex vector valued random variables such that
\eq{(A^{(n)},C^{(n)})\Rightarrow(A,C)\text{ and }B^{(n)}\rightarrow_P 0.}
Then $(A^{(n)}+B^{(n)},C^{(n)})\Rightarrow(A,C)$. In particular, if $A^{(n)}$ and $C^{(n)}$ are independent, then $A^{(n)}+B^{(n)}$ and $C^{(n)}$ are asymptotically independent.
\end{lem}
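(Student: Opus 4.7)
The plan is to recognize this as a standard Slutsky-type / \emph{converging together} statement, and to reduce it to the continuous mapping theorem by first upgrading the hypotheses to joint convergence of the triple $(A^{(n)},B^{(n)},C^{(n)})$.

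First I would establish the auxiliary fact that if $X_n \Rightarrow X$ (in any Polish space) and $Y_n \to_P y_0$ for a deterministic $y_0$, then $(X_n,Y_n) \Rightarrow (X,y_0)$. One clean way is to verify the portmanteau criterion: for a bounded continuous $f$, write
\eq{|\E f(X_n,Y_n) - \E f(X_n,y_0)| \leq \E|f(X_n,Y_n)-f(X_n,y_0)|,}
split the expectation according to whether $\|Y_n-y_0\| \leq \eta$ or not, and use uniform continuity of $f$ on a large ball (chosen using tightness of $X_n$ from $X_n \Rightarrow X$) together with $\P(\|Y_n-y_0\|>\eta) \to 0$. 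Then $\E f(X_n,y_0) = (\E f(\,\cdot\,,y_0))(X_n)$ with $f(\,\cdot\,,y_0)$ bounded continuous, so it converges to $\E f(X,y_0)$ by $X_n \Rightarrow X$.

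Applying this with $X_n := (A^{(n)},C^{(n)})$ and $Y_n := B^{(n)}$, the hypotheses give
\eq{(A^{(n)},B^{(n)},C^{(n)}) \Rightarrow (A,0,C).}
Now the map $h:(a,b,c) \mapsto (a+b,c)$ is continuous, so the continuous mapping theorem yields
\eq{(A^{(n)}+B^{(n)},C^{(n)}) = h(A^{(n)},B^{(n)},C^{(n)}) \Rightarrow h(A,0,C) = (A,C),}
which is the first assertion.

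For the "in particular" clause, I would note that if $A^{(n)}$ and $C^{(n)}$ are independent for each $n$, then the law of the pair $(A^{(n)},C^{(n)})$ is the product of its marginals, and weak convergence of the pair implies that the limit law of $(A,C)$ is also a product, i.e.\ $A$ and $C$ are independent. Combined with the joint convergence $(A^{(n)}+B^{(n)},C^{(n)}) \Rightarrow (A,C)$ just established, this is precisely asymptotic independence of $A^{(n)}+B^{(n)}$ and $C^{(n)}$. I do not anticipate any real obstacle: the only slightly delicate point is the uniform continuity step in the auxiliary lemma, which is handled routinely by tightness of $\{X_n\}$, and since the lemma is entirely soft, the moment/truncation machinery elsewhere in the paper is not needed here.
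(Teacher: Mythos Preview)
Your argument is correct. The auxiliary fact that $X_n\Rightarrow X$ together with $Y_n\to_P y_0$ (constant) implies $(X_n,Y_n)\Rightarrow(X,y_0)$ is standard, and your portmanteau sketch is the usual proof; combining it with the continuous mapping theorem for $h(a,b,c)=(a+b,c)$ cleanly gives the first assertion. The independence clause is also fine: product measures converge to product measures (e.g.\ by looking at characteristic functions in $\C^{k+l}$), so the limit $(A,C)$ inherits independence, and the joint convergence you just proved then gives asymptotic independence of $A^{(n)}+B^{(n)}$ and $C^{(n)}$.

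The paper takes a slightly different, though equally standard, route: it invokes the Cram\'er--Wold device to reduce to one real dimension, i.e.\ to showing that every fixed real-linear combination $\alpha\cdot(A^{(n)}+B^{(n)})+\beta\cdot C^{(n)}$ converges in distribution to $\alpha\cdot A+\beta\cdot C$, and then appeals to the one-dimensional Slutsky lemma (cited as an exercise in Billingsley). Your approach bypasses Cram\'er--Wold by establishing joint convergence of the full triple and applying the continuous mapping theorem directly; this is arguably more transparent and works verbatim in any Polish space, whereas the paper's route is specific to finite-dimensional Euclidean (or complex) space. Either way the lemma is entirely soft, as you note.
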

\begin{proof}
This follows from the Cram\'{e}r-Wold device (see \cite[Chapter 1.7]{billingsley2}) and appears in \cite[Exercise 1.4.2]{billingsley2}. 
\end{proof}

\subsubsection{Step 1} For $j\geq 2$, define 
\eq{Z'_{i,j}:=n^{-(j-1)/2}u_i^*(X-X_{L})^jv_i.} Note that $Z^L_{i_1,j_1}$ and $Z'_{i_2,j_2}$ are functions of disjoint subsets of $\{x_{rs}:r,s\in[n]\}$ and hence, $(Z_{i,j})_{(i,j)\in D}$ and $(Z'_{i,j})_{(i,j)\in D^c}$ are independent. $Z^L_{i_1,j_1}$ and $Z^{L^c}_{i_2,j_2}$ are independent for the same reason.
 
By Lemma~\ref{lemprob}, it suffices to show that
\eql{\label{eqk1}E=E_{i,j}:=n^{-(j-1)/2}u^*_i(X^j-(X-X_L)^j)v_i\rightarrow_P 0} for $i\in [p]$ and $2\leq j\leq m$. We will need the following result.
\begin{lem}\label{lemtao}
Let $u$ and $v$ be unit vectors in $\C^n$ and $X$ be an iid random matrix with atom distribution having mean $0$, variance $1$ and bounded fourth moment. Then
\eql{\E\left|u^*\left(\frac{1}{\sqrt{n}}X\right)^{k}v\right|^2=O\left(\frac{1}{n}\right)} for any fixed $k\geq 1$.
\end{lem}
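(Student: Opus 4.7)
My plan is to prove this by direct expansion of the matrix product and a moment-method count of the resulting walk sum. Writing out the coordinates,
\eq{\E\left|u^* X^k v\right|^2 = \sum_{\mathbf{i}, \mathbf{j}} \conj{u_{i_0}} v_{i_k} u_{j_0} \conj{v_{j_k}}\, \E \prod_{l=1}^k x_{i_{l-1} i_l} \conj{x_{j_{l-1} j_l}},}
where $\mathbf{i} = (i_0,\ldots,i_k)$ and $\mathbf{j} = (j_0,\ldots,j_k)$ range over walks of length $k$ in $[n]$. Since $\E x = 0$, the inner expectation vanishes unless every ordered edge appearing among the $2k$ factors has total multiplicity at least $2$, so the sum reduces to a bounded (in $k$) number of pairing classes indexed by how the $x$- and $\conj{x}$-edges match up.

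The dominant contribution comes from the diagonal pairing $\mathbf{i}=\mathbf{j}$, in which each edge of the common walk is traversed once on each side and gives $\E|x|^2 = 1$. This yields
\eq{\sum_{i_0,\ldots,i_k} |u_{i_0}|^2 |v_{i_k}|^2 = n^{k-1},}
where the $k-1$ free internal indices each range over $[n]$ and the endpoint sums are absorbed using $\sum_{i_0} |u_{i_0}|^2 = \sum_{i_k} |v_{i_k}|^2 = 1$. Dividing by $n^k$ produces the claimed $O(1/n)$ provided every other pairing class is also $O(n^{k-1})$.

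To control the remaining pairing classes I would combine two ingredients. First, the elementary inequality $|\conj{u_a} u_b| \le \tfrac12(|u_a|^2 + |u_b|^2)$ (and its analogue for $v$) reduces cross terms at the endpoints to diagonal-type sums, which can then be closed up using $\|u\|_2 = \|v\|_2 = 1$. Second, a standard tree-counting argument shows that any admissible pairing forces the combined walk onto a multigraph with at most $k$ distinct ordered edges and hence at most $k+1$ distinct vertices; once the endpoint indices are pinned, at most $k-1$ internal indices remain free, so each such class contributes $O(n^{k-1})$. Diagonal entries where some $(i,i)$ appears with multiplicity greater than $2$ are absorbed into lower-order terms using the bounded fourth moment hypothesis. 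The main technical obstacle will be the ``reversed'' pairing in which walk $\mathbf{j}$ traces walk $\mathbf{i}$ backwards: here the constraint structure causes the coefficient to factor through a bilinear form such as $u^*v$, which is $O(1)$ independently of $n$, and the resulting contribution again fits within the required $O(n^{k-1})$ budget.
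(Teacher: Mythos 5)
Your expansion as a sum over pairs of directed walks, the reduction to pairing classes where every directed edge has multiplicity at least two, and the identification of the diagonal pairing $\mathbf{i}=\mathbf{j}$ as the $n^{k-1}$ main term all match the paper's strategy; the paper derives the fixed-$k$ statement from the sharper Lemma~\ref{pathlemma} (which allows $k$ to grow polynomially and works with a truncated atom distribution), whereas your fixed-$k$ count is a reasonable shortcut. One mis-step: the ``reversed'' pairing you flag as the main obstacle is not one in the iid (non-Wigner) setting. The directed edge $(a,b)$ and its reverse $(b,a)$ correspond to independent entries $x_{ab}$ and $x_{ba}$, so $\E[x_{ab}\conj{x_{ba}}]=\E x_{ab}\cdot\E\conj{x_{ba}}=0$; a configuration in which $\mathbf{j}$ retraces $\mathbf{i}$ backwards simply vanishes, and no $u^*v$-type factor ever appears. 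Relatedly, ``at most $k-1$ internal indices remain free'' is imprecise since the four endpoint slots may collapse to anywhere between one and four distinct vertices; the clean statement is $|V|\le k+1$, with equality forcing the two walks (and their endpoints) to coincide, and the contribution of every admissible class is then $O(n^{|V|-2})$.

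The genuine gap is your treatment of high-multiplicity edges. Bounded fourth moment does not control $\E|x|^m$ for $m>4$, yet for $k\ge 3$ an admissible pairing class can place multiplicity six or more on a single directed edge; for instance with $u=v=e_1$ and $k=3$ the term $\E|x_{11}|^6$ sits inside $\E|(X^3)_{11}|^2$ with coefficient one and can be infinite. Your appeal to ``the bounded fourth moment hypothesis'' to absorb these terms therefore does not close the argument. The paper sidesteps this by running the entire path count for the truncated-and-recentered matrix (Appendix~\ref{sec4eps}), where the a.s.\ bound $|x|\le K$ gives $\E|x|^m\ll K^{(m-4)_+}$, and then balancing each power of $K$ against the vertex deficit a high-multiplicity edge forces -- compare the bound $W_x(F)\ll K^{2t}$ in the proof of Lemma~\ref{pathlemma}. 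You would need the same truncation step, or an explicit accounting showing the vertex-count drop offsets the moment growth, before your proof is complete.
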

\begin{rem}
Lemma ~\ref{lemtao} is a special case of Lemma~\ref{pathlemma} which establishes the same statement for $k$ that is allowed to grow polynomially with $n$. We postpone the proof to Subsection \ref{subsec-pl}, where the result is needed in full generality. We remark that Lemma~\ref{lemtao} can also be found in \cite[Lemma 2.3]{tao}.  
\end{rem}
Fix $j\geq 2$ and let $\delta_n=\log n$ (any slowly growing function of $n$ will suffice). 
By Lemma~\ref{lemtao} and Markov's inequality, for any $k\geq 1$,
\eql{\label{taolem}\bigcap_{m=1}^{M}\left\{u_m^*\left(\frac{1}{\sqrt{n}}X\right)^{k}v_m\leq\frac{\delta_n}{\sqrt{n}}\right\}} occurs with high probability for any finite set of $2M$  unit vectors $(u_m)_{m=1}^M$ and $(v_m)_{m=1}^M$.

Recall that
\eq{L:=\bigcup_{i\in[p]}\{(k,l)\in [n]\times [n]:|u_{i,k}v_{i,l}|\geq n^{-1/4+\delta}\}} where $\delta>0$ is fixed. Since $|u_i|_2=|v_i|_2=1$, we have $|L|\ll n^{1/2-2\delta}$. To control, $\|X_L\|$, we will need
\begin{lem}\label{lemnet}
Suppose $S\subset A\times B$ with $\max(|A|,|B|)\leq m$. Then $\|X_S\|\leq O(\log n\sqrt{m})$ w.h.p.
\end{lem}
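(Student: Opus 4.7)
The plan is to combine an $\varepsilon$-net argument on the unit spheres of $\C^{|A|}$ and $\C^{|B|}$ with Bernstein's inequality applied to each pair in the net. Since every nonzero entry of $X_S$ lies in $A\times B$, the operator norm can be computed as $\|X_S\|=\sup_{u,v}|u^*X_Sv|$ with the supremum taken over unit vectors $u\in\C^{|A|}$, $v\in\C^{|B|}$.

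First I would fix $1/4$-nets $\mathcal{N}_A$, $\mathcal{N}_B$ of the respective unit spheres; a standard volumetric estimate gives $|\mathcal{N}_A|,|\mathcal{N}_B|\leq 9^{2m}$. A routine approximation then yields $\|X_S\|\leq 2\sup_{u\in\mathcal{N}_A,\,v\in\mathcal{N}_B}|u^*X_Sv|$, reducing the task to controlling the scalar $u^*X_Sv=\sum_{(i,j)\in S}\overline{u_i}\,x_{ij}\,v_j$ uniformly over the (discrete) net.

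For fixed $u,v$, this is a sum of independent, centered, complex random variables. Its variance is $\sum_{(i,j)\in S}|u_i|^2|v_j|^2\leq \|u\|_2^2\|v\|_2^2=1$, and after the truncation from Appendix~\ref{sec4eps} each summand has modulus at most $K|u_i||v_j|\leq K=o(n^M)$. Bernstein's inequality then gives $\P(|u^*X_Sv|\geq t)\leq 2\exp\!\bigl(-ct^2/(1+Kt)\bigr)$. Taking $t=C\sqrt{m}\log n$ with $C$ large and using $M<1/2$, the linear-in-$t$ term $Kt$ is dominated by $t^2$ in the regime of interest, so the Bernstein exponent is of order at least $m(\log n)^2$, which comfortably beats $\log(|\mathcal{N}_A||\mathcal{N}_B|)=O(m)$. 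A union bound over the net then yields $\|X_S\|\leq O(\sqrt{m}\log n)$ with high probability (indeed with probability $1-O(n^{-c})$ for any fixed $c$).

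The main obstacle is calibrating the Bernstein bound against the net size in the presence of a possibly polynomial truncation $K$: this is precisely where the logarithmic slack in the statement is needed, to absorb both the cardinality $9^{4m}$ of the product net and the linear-in-$t$ denominator $1+Kt$. Should $K$ be permitted to grow too aggressively in a variant, one would split $X_S$ into a piece truncated at a smaller scale (handled by Bernstein) plus a small remainder bounded by its Frobenius norm via Markov; the moment hypothesis and truncation set up in Appendix~\ref{sec4eps} make this refinement unnecessary here.
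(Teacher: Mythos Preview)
Your $\varepsilon$-net plus Bernstein argument has a genuine gap in the regime where the lemma is actually used. The claim that ``the linear-in-$t$ term $Kt$ is dominated by $t^2$'' requires $K\ll t=C\sqrt{m}\log n$, i.e.\ $n^{M}\ll \sqrt{m}\log n$. But the truncation exponent from \eqref{mdef} satisfies $M\ge 1/4-\epsilon$, while in the paper's application (immediately after the lemma) $m\le n^{1/2-2\delta}$, so $\sqrt{m}\log n\asymp n^{1/4-\delta}\log n$; whenever $M>1/4-\delta$ your inequality fails. In that linear regime the Bernstein exponent is only of order $t/K$, and beating the net cardinality $9^{O(m)}$ forces $t\gtrsim mK$. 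Your argument therefore yields only $\|X_S\|=O(mK)$ w.h.p., which for $m\sim n^{1/2-2\delta}$ and $K$ close to $n^{1/2}$ is far too weak for the subsequent bound on $E_a$. The secondary-truncation fix you mention and then dismiss is not obviously sufficient either: truncating at $K'$ small enough to put Bernstein in the Gaussian regime against a net of size $9^{O(m)}$ forces $K'\lesssim 1/\sqrt{m}$, at which point the Frobenius remainder is not controllable with only four moments.

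The paper's proof takes a different route that sidesteps the tension between $K$ and $m$ entirely. It writes $X_S=\tfrac12(X+X')$ with $X'=X_S-X_{S^c}$, so by the triangle inequality it suffices to bound $\|X'\|$. A symmetrization step (Jensen applied to $\E[X'-X''\,|\,X']=X'$ with $X''$ an independent copy) gives $\E\|X'\|\le\E\|X'-X''\|$, and $X'-X''$ is an $m\times m$ i.i.d.\ matrix with symmetric atom distribution, so Theorem~\ref{opnorm} controls its norm by $O(\sqrt{m})$; Markov then supplies the $\log n$ factor for the w.h.p.\ conclusion. This uses only the fourth-moment hypothesis and is uniform in $m$, which is exactly what the $\varepsilon$-net/Bernstein route cannot deliver under merely polynomial tails.
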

\begin{proof}
Since $\|X_S\|$ is unchanged when restricting $X_S$ to an $m\times m$ submatrix containing $S$, we may assume $m=n$. If $S=\emptyset$, Lemma~\ref{lemnet} is a consequence of Theorem~\ref{opnorm}. Writing $X':=X_{L}-X_{L^c}$, we have 
\eq{\|X_L\|\leq\frac{1}{2}(\|X\|+\|X'\|)} from the triangle inequality. If the atom distribution $x$ is symmetric, applying Theorem~\ref{opnorm} to $X$ and $X'$ yields the desired bound. To prove the lemma for general $x$, we will need a symmetrization argument from \cite[Section 2.3.2]{tao-book} that we reproduce here for convenience. Letting $X''$ be an independent copy of $X'$, we have
\eq{\E[X'-X''|X']=X'.} Since the operator norm is a convex function, we may apply Jensen's inequality to get
\eq{\|X'\|\leq\E[\|X'-X''\||X'].} Removing the conditioning on $X'$, we have
\eq{\E\|X'\|\leq\E\|X'-X''\|.} Now $X'-X''$ has iid entries, so applying Theorem~\ref{opnorm}, we have 
\al{\P[\|X'\|\geq\log n\sqrt{n}]&\leq \frac{\E\|X'\|}{\log n\sqrt{n}}\\
&\leq \frac{\E\|X'-X''\|}{\log n\sqrt{n}}\\
&=o(1).}
\end{proof}
Applying Lemma~\ref{lemnet} with $m=n^{1/2-2\delta}$ gives
\eql{\label{xlbound}\|X_L\|\ll (\log n)n^{1/4-\delta}\text{ w.h.p.}} 

Now let 
\eq{X^a:= \left\{
     \begin{array}{lr}
       X & : a=0\\
       X_{L} & : a=1
     \end{array}
   \right.} 
	 Expanding ~\eqref{eqk1}, we have
\al{|E|&\leq \sum_{a=1}^j\sum_{\substack{a_1,\cdots,a_j\in\{0,1\}\\\sum a_i=a}}n^{-(j-1)/2}\left|u_i^*X^{a_1}\ldots X^{a_j}v_i\right|\\
&=:\sum_{a=1}^kE_a.}

For $a\geq 2$, 
\al{E_a&\ll\binom{j}{a}\left\|\frac{X}{\sqrt{n}}\right\|^{j-a}\left\|\frac{X_{L}}{\sqrt{n}}\right\|^{a-1}\|X_L\|\\
&=o(1) \text{ w.h.p.},} where we have used ~\eqref{xlbound} and that $a\geq 2$.  

To bound $E_1$, we have 
\al{E_1&\leq\sum_{m=0}^{j-1}\left|u_i^*\left(\frac{X}{\sqrt{n}}\right)^mX_{L}\left(\frac{X}{\sqrt{n}}\right)^{j-1-m}v_i\right|\\
&\leq\sum_{(k,l)\in L}\sum_{m=0}^{j-1}|x_{kl}|\left|u_i^*\left(\frac{X}{\sqrt{n}}\right)^me_k\right|\left|e_l^T\left(\frac{X}{\sqrt{n}}\right)^{j-1-m}v_i\right|\\
&\ll \frac{\delta_n}{\sqrt{n}}\sum_{(k,l)\in L}|x_{kl}|\text{ w.h.p.}} Note that if $j\geq 2$, then either $m\geq 1$ or $j-1-m\geq 1$ for $0\leq m\leq k-1$. Hence the last line follows from ~\eqref{taolem}.
 
Since $\E|x_{kl}|\leq 1$, $\delta_n=\log n$ and $|L|=O(n^{1/2-2\delta})$, we have $E_1\rightarrow_P 0$ by Markov's inequality, and ~\eqref{eqk1} follows.

\subsubsection{Step 2}

We first state and prove the complex version of Wick's theorem (also known as Isserlis' theorem, see \cite{wick}) which will be needed later.

\begin{lem}\emph{(Complex Wick's theorem)}\\
Let $(Z_1,Z_2,\ldots, Z_n) = (X_1+iY_1,\ldots,X_n+iY_n)$ be a centered complex Gaussian vector. Thus the vector $(X_1,Y_1,\ldots,X_n,Y_n)$ is multivariate normal. Then for any $I=(i_1,\ldots,i_{2k})\in[n]^{2k}$, 
\eq{E\prod_{l=1}^{2k}Z_{i_l}=\sum_{P}\prod_{j=1}^k\E [Z_{i_{p_{2j-1}}}Z_{i_{p_{2j}}}]} where the sum is over all partitions $P=\bigcup_{j=1}^k\{p_{2j-1},p_{2j}\}$ of $[2k]$ into pairs. Also, the left hand side is $0$ if $I$ has odd length.
\end{lem}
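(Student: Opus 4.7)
The plan is to reduce the complex Wick identity to the classical (real) Wick identity for multivariate normal vectors, which will be assumed as the starting point. Writing $Z_j = X_j + iY_j$, the hypothesis says that the real vector $W = (X_1, Y_1, \ldots, X_n, Y_n)$ is centered Gaussian, and so by the real Wick theorem, for any indices $j_1, \ldots, j_{2k}$ into $W$,
\eq{\E \prod_{l=1}^{2k} W_{j_l} \;=\; \sum_{P} \prod_{m=1}^k \E\bigl[W_{j_{p_{2m-1}}} W_{j_{p_{2m}}}\bigr],}
and $\E \prod W_{j_l} = 0$ whenever the number of factors is odd (by odd symmetry of the Gaussian density, or equivalently by the real Wick formula applied to the empty set of pairings).

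The first step is to expand each factor $Z_{i_l} = X_{i_l} + iY_{i_l}$ and distribute, writing $W^0 = X$, $W^1 = Y$, so that
\eq{\prod_{l=1}^{2k} Z_{i_l} \;=\; \sum_{\epsilon \in \{0,1\}^{2k}} i^{|\epsilon|} \prod_{l=1}^{2k} W^{\epsilon_l}_{i_l},}
with $|\epsilon| = \sum \epsilon_l$. Taking expectations and applying real Wick inside each term gives
\eq{\E \prod_{l=1}^{2k} Z_{i_l} \;=\; \sum_{\epsilon} i^{|\epsilon|} \sum_P \prod_{m=1}^k \E\bigl[W^{\epsilon_{p_{2m-1}}}_{i_{p_{2m-1}}} W^{\epsilon_{p_{2m}}}_{i_{p_{2m}}}\bigr].}
The crucial observation is that the sum over $\epsilon \in \{0,1\}^{2k}$ factors across the $k$ pairs of the partition, because $i^{|\epsilon|} = \prod_m i^{\epsilon_{p_{2m-1}} + \epsilon_{p_{2m}}}$. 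Swapping the two sums, the inner sum becomes a product over pairs, and each factor is
\eq{\sum_{a,b \in \{0,1\}} i^{a+b}\, \E\bigl[W^a_{i_{p_{2m-1}}} W^b_{i_{p_{2m}}}\bigr] \;=\; \E\bigl[(X_{i_{p_{2m-1}}} + iY_{i_{p_{2m-1}}})(X_{i_{p_{2m}}} + iY_{i_{p_{2m}}})\bigr] \;=\; \E[Z_{i_{p_{2m-1}}} Z_{i_{p_{2m}}}],}
by linearity of expectation. Assembling the product over $m$ and summing over $P$ gives exactly the stated identity. The odd-length case is handled the same way, since every expansion term $\E \prod W^{\epsilon_l}_{i_l}$ involves an odd number of real Gaussian factors and therefore vanishes.

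There is no real obstacle here; the only subtlety worth flagging is the bookkeeping to see that the $i^{|\epsilon|}$ weights distribute correctly across the pair blocks of $P$, so that collapsing the $\epsilon$ sum recovers the complex pseudo-covariance $\E[Z_a Z_b]$ (rather than $\E[Z_a \overline{Z_b}]$). This is precisely the form needed in the covariance calculations of Proposition~\ref{myclt}, where both $\E G^{(d_1)} G^{(d_2)}$ with $d_1, d_2 \in \{0,1\}$ appear.
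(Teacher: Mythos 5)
Your proof is correct and follows essentially the same route as the paper: expand each $Z_{i_l}$ into real and imaginary parts, apply the real Wick theorem to every term of the resulting multilinear expansion, and regroup the sum by pairings so that the binary choices collapse to the complex pseudo-covariances $\E[Z_a Z_b]$. The only cosmetic difference is that the paper absorbs the factor of $i$ into its auxiliary variable ($W_i^2 := iY_i$), whereas you carry it explicitly as $i^{|\epsilon|}$; the underlying swap-the-sums argument is identical.
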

\begin{proof}
Wick's theorem is the statement of the lemma for multivariate centered real Gaussians. The complex version follows by expanding both sides of the equation into real and imaginary parts and applying Wick's theorem. Let \eq{W_{i}^a= \left\{
     \begin{array}{lr}
       X_i & : a=1\\
       iY_i & : a=2
     \end{array}
   \right.}
	Then \eq{\E\prod_{l=1}^{2k}Z_{i_l}=\sum_{a_1,\ldots,a_{2k}\in\{1,2\}}\prod_{l=1}^{2k}W_{i_l}^{a_l}} while \eq{
	\sum_{P}\prod_{i=1}^k\E [Z_{p_{2j-1}}Z_{p_{2j}}]=\sum_{P}\sum_{a_1,\ldots,a_{2k}\in\{1,2\}}\prod_{i=1}^k\E [W_{p_{2j-1}}^{a_{2j-1}}W_{p_{2j}}^{a_{2j}}].} Switching the sums and applying Wick's theorem to $\E\prod_{l=1}^{2k}W_{i_l}^{a_l}$ for each choice of the $a_l$'s yields the result. 

\end{proof}

We now prove Proposition ~\ref{myclt} for the collection of random variables $(Z^{L^c}_{i,1})_{i=1}^p\cup (Z_{i,j})_{i=1,j=2}^{p,m}$. This part of the proof employs the moment method in a similar way to those in \cite{tao} and \cite{bc}. To avoid notational clutter on a first reading, one may set $p=1$ to grasp the main ideas of the proof. 

To handle the $j=1$ case uniformly, in the proof we will abuse notation by writing $Z_{i,1}$ for $Z^{L^c}_{i,1}$ and $G_{i,1}$ for $G^{L^c}_{i,1}$. When $j=1$, we will denote $X_{L^c}$ by $X^j$ and finally, we define
\eql{\label{uj1def}C^{(d_1),(d_2)}_{i_1,i_2}(j):=\begin{cases}
C^{(d_1),(d_2)}_{i_1,i_2}:j\geq 2\\
\lim_{n\to\infty}\sum_{(k,l)\in L^c}\conj{(u_{i_1,k})}^{(d_1)}\conj{(u_{i_2,k})}^{(d_2)}(v_{i_1,l})^{(d_1)}(v_{i_2,l})^{(d_2)}.
\end{cases}}

By Carleman's theorem for the case of a complex vector of random variables (see e.g. \cite{bai-silver}), it suffices to show that the multivariate mixed moments converge. Namely,
\eql{\label{moment}\E\prod_{\substack{1\leq i\leq p\\1\leq j\leq m}} Z_{i,j}^{r_{i,j}}\overline {Z_{i,j}}^{s_{i,j}}= \E\prod_{\substack{1\leq i\leq p\\1\leq j\leq m}} G_{i,j}^{r_{i,j}}\overline {G_{i,j}^{s_{i,j}}} +o(1)} for $(r_{i,j})_{i=1,j=1}^{p,m}, (s_{i,j})_{i=1,j=1}^{p,m}\in\mathbb{N}^{pm}$.

Let $Q_1:=-\frac{1}{2}\sum_{i,j}(j-1)(r_{i,j}+s_{i,j})$. Then the left hand side of \eqref{moment} is \eql{\label{momprod}
n^{-Q_1}\E\prod_{\substack{1\leq i\leq p\\1\leq j\leq m}} (u_i^*X^jv_i)^{r_{i,j}}(u_i^T\conj{X}^j\conj{v_i})^{s_{i,j}}
.}

Expanding the product in ~\eqref{momprod} will yield terms corresponding to the union of directed paths on the vertex set $[n]$ with $\sum_ir_{i,j}+s_{i,j}$ of them having length $j$ for each $1\leq j\leq m$. We first introduce notation in order to write \eqref{momprod} as a sum $n^{-Q_1}\sum_{*}W(F)$, with $*$ and $W(F)$ defined appropriately.
Next, we reduce the sum to terms with paths having multiplicity two and disjoint interior vertices (see Lemma~\ref{mainlem}). Finally we apply the complex Wick theorem to obtain the proposition. 

Let \eq{S:=\{(a,b,c,d):a\in [p],b\in [m],d\in\{0,1\},c\in [r_{a,b}]\text{ if }d=0 \text{ and } c\in [s_{a,b}] \text{ if } d=1\}} be the index set for the $Z_{i,j}$'s. For $s\in S$ we write $s=(s_a,s_b,s_c,s_d)$. Recalling ~\eqref{conjdef}, \eqref{momprod} can be written as 
\eql{\label{momprod2}n^{-Q_1}\E\prod_{s\in S}(u_{s_a}^*X^{s_b}v_{s_a})^{(s_d)}.} We let
\eq{T:=\{(s,e):s\in S\text{ and }e\in [s_{b+1}]\}} be the index set of terms within the $Z_{i,j}$'s. For $t\in T$, we write \eq{t=(t_s,t_e)=(t_a,t_b,t_c,t_d,t_e).} By a slight abuse of notation, we will write $u_{t}$ for $u_{t_a}$ and $u_s$ for $u_{s_a}$. We denote the index set for terms in the expansion of \eqref{momprod2} by  
\eq{\mathcal{F'}:=\{F:T\rightarrow [n]:t_b=1\Rightarrow (F(t,1),F(t,2))\in L^c\}.} Finally for $s\in S$ and $F\in \mathcal{F'}$ let 
\all{\label{momweight1}W_s(F)&:=(u^{*}_{s,F(s,1)}v_{s,F(s,s_b+1)}\mathbbm{1}_{[s_b\geq 2\text{ or }(F(s,1),F(s,2))\in L^c]})^{(s_d)}(\E\prod_{e=1}^{s_b}x_{F(s,e),F(s,e+1)})^{(s_d)}\\ &=:W_{s,(u,v)}(F)W_{s,x}(F)} 
and set 
\eql{\label{wuvdef}W_{u,v}(F):=\prod_{s\in S} W_{s,(u,v)}(F),}
\eq{W_x(F):=\prod_{s\in S} W_{s,x}(F)} and 
\eql{\label{momweight2}W(F):=\prod_{s\in S}W_s(F).}
 Now we can write \eqref{momprod2} as
\eql{\label{momsum}n^{-Q_1}\E\prod_{s\in S}(u_{s}^*X^{s_b}v_{s})^{(s_d)}=n^{-Q_1}\sum_{F\in\mathcal{F'}}W(F).}

For each partition $\CT=\{T_1,\ldots,T_q\}$ of $T$, set
\eq{\F_{\CT}:=\{F\in\CF:\{F^{-1}(i):i\in[n],F^{-1}(i)\neq\emptyset\}=\{T_1\ldots,T_q\}\}} to be the set of terms $F$ whose preimages induce the partition $\{T_1.\ldots,T_q\}$. We can now write
\eq{n^{-Q_1}\sum_{F\in\CF}W(F)=
n^{-Q_1}\hspace{-0.5cm}
\sum_{\CT=\{T_1,\ldots,T_q\}}
\sum_{F\in\F_{\CT}}
W(F).}

We now define notation for the edges of the graph induced by the terms $F$. First, let
$E:=\{(t,t')\in T^2:t_s=t'_s,t'_e=t_e+1\}$ and fix a partition $\CT=\{T_1\ldots,T_q\}$ of $T$. For $F\in \CF_{\CT}$ and $i,j\in [q]=[q(\CT)]$, let 
\eq{E^{\CT}_{i,j}:=\{e=(t,t')\in E:t\in T_i\text{ and }t'\in T_j\}} and let
\eq{E_{\CT}:=\{E^{\CT}_{i,j}:|E^{\CT}_{i,j}|>0\}.}
 Note that $(|e|)_{e\in E_{\CT}}$ is independent of $F\in \CF_{\CT}$ and that 
\eql{\label{wxprod}W_x(F)=\prod_{e\in E_{\CT}}\E|x|^{|e|}.} Since $\E|x|=0$, $W_x(F)=0$ if $|e|=0$ for any $e\in E_{\CT}$. Thus defining
\eq{\CF:=\bigcup_{\substack{\CT\text{ partition of }T:\\|e|\geq 2 \forall e\in E_{\CT}}}\CF_{\CT},}
we have
\eql{\label{momsum2}n^{-Q_1}\sum_{F\in\CF'}W(F)=n^{-Q_1}\sum_{F\in\CF}W(F).}

Each $F\in\CF$ can be interpreted as a union of paths on $[n]$. More precisely, letting $T_s:=\{t\in T:t_s=s\}$, we define $\pi_{F,s}:=F|_{T_s}$ to be the path of $F$ corresponding to term $s\in S$. The interior vertices of $\pi_{F,s}$ are defined to be $F(\{(s,e):e=2,3,\ldots,s_b\})$.

\begin{lem}\label{mainlem}Assume the hypotheses of Proposition~\ref{myclt} and recall the notation introduced above. Let $\CF_0$ be the set of terms $F$ such that each path $\pi_{F,s}$ for ${s\in S}$ has multiplicity $2$ and different paths have disjoint interior vertices. 

Then \eq{n^{-Q_1}\sum_{F\in\CF}W(F)=n^{-Q_1}\sum_{F\in\CF_0}W(F)+o(1).}
\end{lem}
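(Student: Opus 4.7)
I would carry out a standard moment-method vertex-counting argument: for each partition $\CT$ of $T$, bound $n^{-Q_1}\sum_{F\in\CF_\CT}|W(F)|$ and show it is $o(1)$ unless $\CT$ arises from some $F\in\CF_0$, in which case it is $O(1)$. The weight decomposes as $W(F)=W_{u,v}(F)W_x(F)$. Using the truncation $|x|\le K=o(n^M)$, the constraint $|e|\ge 2$ (from $F\in\CF$), and $\E|x|^2=1$, one obtains $|W_x(F)|\le K^{L-2E'}$, where $L=\sum_s s_b$ is the total path length with multiplicity and $E'=|E_\CT|$ is the number of distinct edge classes.

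The crucial feature of $\CF_0$ is that paired paths share identical images, so each walk-pair contributes an endpoint sum $\sum_{a,b}|u_{s,a}|^2|v_{s,b}|^2=\|u_s\|_2^2\|v_s\|_2^2=1$ (each $u_s,v_s$ is a unit vector). Meanwhile, the $V_{int}$ interior-vertex parts of $\CT$ each range freely over $[n]$, giving a factor $n^{V_{int}}$. For $F\in\CF_0$ with simple-path walks, $V_{int}=L/2-|S|/2$ (each of the $|S|/2$ walks of length $\ell$ contributes $\ell-1$ interior vertices, pairwise disjoint across walks), which precisely cancels the normalization $n^{-Q_1}=n^{-(L-|S|)/2}$ and yields the $O(1)$ main term.

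For $F\in\CF\setminus\CF_0$, I would proceed case-by-case depending on how $\CF_0$ is violated: (a) some path has multiplicity $1$, in which case its edges must be duplicated by other paths, forcing interior-vertex sharing and reducing $V_{int}$; (b) distinct paths share interior vertices directly, again reducing $V_{int}$; (c) some edge has multiplicity $>2$, so $E'<L/2$, and the combinatorial bound $V\le E'+C(F)$ (with $C(F)$ the number of connected components of the multigraph induced by $F$) forces a corresponding drop in the vertex count. In each case one obtains a bound strictly smaller than $n^{L/2-|S|/2}$, giving $o(1)$ after the $n^{-Q_1}$ normalization.

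The main obstacle is coordinating the three ingredients---$K^{L-2E'}$, $n^{V_{int}}$, and the endpoint $(u,v)$-factor bound---across the three regimes encoded in $M=\max(\min(c,1/2),1/4)-\eps$. When an edge has multiplicity strictly greater than $2$, the factor $K^{L-2E'}$ can grow as fast as $n^{M(L-2E')}$, and one must verify that the compensating decrease in vertex count always beats this growth. The regime $c<1/2$ is most delicate (there $M\approx c$) and requires a hybrid of sup-norm and $\ell^2$-norm bounds on the $(u_s,v_s)$-endpoint sums, leveraging the automatic bound $|u_{s,k}v_{s,l}|\le n^{-1/4+\delta}$ for length-$1$ paths from the $L^c$-restriction, to close the estimate.
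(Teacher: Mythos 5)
Your high-level plan—partition $\CF$ over the partitions $\CT$ of $T$, show the $\CF_0$ contribution is $O(1)$ and the rest is $o(1)$, using truncation to control $W_x$ and vertex counting for the normalization $n^{-Q_1}$—matches the paper's strategy. However, there are two substantive gaps.

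First, your bound $|W_x(F)|\le K^{L-2E'}=\prod_{e\in E_\CT}K^{|e|-2}$ is strictly weaker than what the paper uses. Since $m\ge 4+\eps$ in all regimes, the paper exploits $\E|x|^a\ll K^{(a-4)_+}$ (no $K$-factor for $|e|\le 4$), whereas your version produces an extra $K$ for $|e|=3$ and $K^2$ for $|e|=4$. In the localized regime ($c$ small), $M$ is only about $1/4-\eps$ and the compensation from lost vertices is tight; the $(|e|-4)_+$ exponent (rather than $|e|-2$) is exactly what makes the budget close. You would either have to redo the entire accounting and verify your weaker bound still works, or upgrade it to $(|e|-4)_+$.

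Second, and more centrally, you name the key technical difficulty---``coordinating the three ingredients\dots across the three regimes''---but do not resolve it, and it is precisely where the paper's argument lives. The paper does not compare $n^{V_{int}}$ against a global $K$-power and a global endpoint factor; rather, it \emph{distributes} the $K$-budget onto individual partition parts and even onto individual $t\in T$, via a vertex-local weight $K(t,i)$ built from $|u_{t,i}|^{-(1-\eps)}$, $|v_{t,i}|^{-(1-\eps)}$, or $\max(K^2n^{-c(1-\eps)},K)$, and a pruned version $K^*(t,i)$ designed so that the inequality $\prod_{e}K^{(|e|-4)_+}\ll\prod_j\prod_{t\in T_j}K^*(t,i_j)$ holds pointwise in $(i_1,\dots,i_q)$. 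This is what lets the sum over $\CF_\CT$ factor as $\prod_j W^*(T_j)$ with each factor controllable by H\"older on the unit vectors. Without this local redistribution, a case analysis on multiplicities and connected components (your cases (a)--(c)) does not obviously close, because the endpoint-vector factors and the $K$-factors interact differently depending on which coordinates of $u_s,v_s$ are large, and a single global bound loses too much. Your closing remark about ``a hybrid of sup-norm and $\ell^2$-norm bounds\dots to close the estimate'' gestures at this but does not supply a definition of the hybrid quantity, nor the case analysis (analogous to the paper's Lemma~\ref{lem1}) showing each part's contribution is $O(1)$ with a gain of $o(1)$ whenever it is not of the required ``size-2 pair'' form. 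As written, the proposal is an outline of the right strategy but does not constitute a proof.
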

We will postpone the proof of the lemma to the end of the section. Assuming the lemma, we now prove the proposition.

First suppose $\sum_i r_{i,j}+s_{i,j}$ is odd for some $j$. Then $\CF_0$ is empty and the left-hand side of ~\eqref{moment} is $o(1)$ which matches the right-hand side by the vanishing of odd mixed moments of a centered complex Gaussian. For the rest of the proof, we can thus assume that for each $j$, $\sum_i r_{i,j}+s_{i,j}$ is even. 

We group the terms in $\F_0$ as follows. Let $S_j:=\{s\in S:s_b=j\}$ and define $\mathcal{P}_j$ to be the set of unordered partitions of $S_j$ into parts of size two. Note that by assumption, $|S_j|$ is even for all $j$.

For $F\in\CF_0$, note by ~\eqref{momweight1} and ~\eqref{momweight2} that $W(F)$ does not depend on the interior points $\{f(s,e):s\in S, e=2,\ldots,s_b\}$. There are $\sum_{i,j}(j-1)(r_{i,j}+s_{i,j})$ such points which occur in pairs and can be chosen in $n^{Q_1}$ ways.

For $F\in\CF_0$ and $j\in[m]$, let $P_{F,j}\in\CP_j$ be the partition of $\CP_j$ induced by $F$. Then $F$ satisfies the condition that for each part $\{p,q\}\in P_{F,j}$, $F(p,1)=F(q,1)$ and $F(p,p_b+1)=F(q,q_b+1)$. 

Summing over the choices for interior points and $F$ satisfying the above condition instead of summing over $F\in\CF_0$ incurs an $o(1)$ error and we have

\eql{\label{almostdone}n^{-Q_1}\sum_{F\in\CF_0}W(F)=\prod_{j=1}^m\sum_{P_j\in\mathcal{P}_j}\prod_{\{p,q\}\in P_j}\sum_{\substack{F(p,1)=F(q,1),\\F(p,j+1)=F(q,j+1)\in [n]}}W_p(F)W_q(F)+o(1)} where, recalling \eqref{momweight1},
\eq{W_p(F)W_q(F)=(\E x^{(p_d)}x^{(q_d)})^j\prod_{r\in\{p,q\}}\conj{u}^{(r_d)}_{r,F(r,1)}v^{(r_d)}_{r,F(r,j+1)}\mathbbm{1}_{[j\geq 2\text{ or }(F(r,1),F(r,2))\in L^c]}.}

Finally, using \eqref{lim1new} and \eqref{uj1def}, ~\eqref{almostdone} evaluates to 
\eql{\label{lhsfinal}\prod_{j=1}^m\sum_{P_j\in\mathcal{P}_j}\prod_{\{p,q\}\in P_j}(\E x^{(p_d)}x^{(q_d)})^jC^{(p_d),(q_d)}_{p_a,q_a}(j)+o(1).}

On the other hand, we let $\mathcal{P}$ be the set of partitions of $S$ into pairs and for $s\in S$, we set $G_s:=G^{(s_d)}_{s_a,s_b}$. Note that for $j\neq k$, $\E G_{i_1,j}G_{i_2,k}=0$ and hence $G_{i_1,j}$ and $G_{i_2,k}$ are independent. Applying Wick's theorem to the right hand side of \eqref{moment} gives 
\al{\E\prod_{\substack{1\leq i\leq p\\1\leq j\leq m}} G_{i,j}^{r_{i,j}}\overline {G_{i,j}^{s_{i,j}}}
&=\E\prod_{s\in S}G_s\\
&=\prod_{j=1}^m\E\prod_{s\in S_j}G_s\\
&=\prod_{j=1}^m\sum_{P_j\in\mathcal{P}_j}\prod_{\{p,q\}\in P_j}\E G_pG_q
} where we have used Wick's theorem in the third line. Comparing ~\eqref{lhsfinal} and \eqref{covariance} then concludes the proof of the proposition.

Note the following special cases of Proposition ~\ref{myclt}, where we write $G_{i,1}$ for $G^{L^c}_{i,1}$.
	\begin{enumerate}[label=(\roman*)]

	\item If $\E x^2=0$, condition \eqref{covariance} becomes 
		\eql{\label{sp1}\E G_{i_1,j}\conj{G_{i_2,k}}=\delta_{jk}C^{(0),(1)}(i_1,i_2)} and 
		\eq{\E G_{i_1,j}G_{i_2,k}=0.}

	\item If we further assume that for $p=d^2$, the vectors $(u_i,v_i)_{i=1}^p$ are of the form $(u_a,u_b)_{a,b=1}^d$ with $(u_a)_{a=1}^d$ orthonormal, then  \eqref{sp1} reduces to 
		\eql{\label{sp2}\E G_{(a,b),j}\conj{G_{(c,d),k}}=\delta_{jk}\delta_{ab}\delta_{cd}.}
	\end{enumerate}

\subsection{Proof of Lemma~\ref{mainlem}}

Fix a partition $\CT=\{T_1,\ldots,T_q\}$ of $T$ with $|e|\geq 2$ for every $e\in E_{\CT}$. We first rewrite the sum $n^{-Q_1}\sum_{F\in\F_{\CT}}W(F)$ as a product of terms over $j\in [q]$.

Define $T^1:=\{t\in T:t_e=1\}$, $T^2:=\{t\in T:t_e=t_b+1\}$, $T^3:=T\backslash(T^1\cup T^2)$ and let $T_j^l:=T_j\cap T^l$ for $l=1,2,3$. For $t\in T$ and $i\in [n]$, define the vertex weights
\eql{\label{wtidef}w(t,i):=\begin{cases}
|u_{t,i}| : t\in T^1\\
|v_{t,i}| : t\in T^2\\
n^{-1/2} : t\in T^3
\end{cases}..} The $w(t,i)$'s account for the factors $n^{-Q_1}$ and $W_{u,v}(F)$ in ~\eqref{wuvdef} and ~\eqref{momsum} respectively. Since $\E|x|^a\ll K^{(a-4)_+}$, using ~\eqref{wxprod} we have
\all{\label{eqfmain}\notag n^{-Q_1}\left|\sum_{F\in\CF_{\{T_1,\ldots,T_q\}}}W(F)\right|
&\notag\leq\sum_{\substack{i_1,\ldots,i_q\in[n]\\\text{distinct}}}\left(\prod_{j=1}^q\prod_{t\in T_j}w(t,i_j)\right)
\prod_{e\in E_{\CT}}K^{(|e|-4)_+}\\
&\leq\sum_{i_1,\ldots,i_q\in[n]}\left(\prod_{j=1}^q\prod_{t\in T_j}w(t,i_j)\right)
\prod_{e\in E_{\CT}}K^{(|e|-4)_+}.}
We would like to bound $\prod_{e\in E_{\CT}}K^{(|e|-4)_+}$ by $\prod_{t\in T}K^*(t,i_{j(t)})$ for some suitably defined $K^*$ in order to bound the right-hand side ~\eqref{eqfmain} by
\eq{\prod_{j=1}^q\sum_{i_1,\ldots,i_q\in[n]}\prod_{t\in T_j}w(t,i_j)K^*(t,i_j).}
We do this first for the expression $\prod_{e\in E_{\CT}}K^{|e|}$ in order to motivate some of the technical definitions. 
Fix $i_1,\ldots,i_q\in[n]$ and assume for $t\in T$ and $j\in[q]$ that $|u_{t,i_j}|,|v_{{t},i_j}|\neq 0$. Recall the parameter $c\in [0,1]$ from ~\eqref{cdefnew}. For $t\in T_j$, $t^1\in T_j^1$ and $t^2\in T_j^2$, define 
\eql{\label{ktidef}K(t,i):=\begin{cases}
       |u^{-(1-\eps)}_{t,{i}}| \quad: t\in T^1\\
			 |v^{-(1-\eps)}_{t,i}| \quad: t\in T^2\\
			\max(K^2n^{-c(1-\eps)},K) \quad: t\in T^3
     \end{cases}.} 
We first show that 
\eql{\label{eqkspread}\prod_{e\in E_{\CT}}K^{|e|}\ll\prod_{j=1}^q\prod_{t\in T_j}K(t,i_j).}

Fix $s\in S$. Suppoes $s_b=1$. Then for $\delta$ and $\eps$ sufficiently small, 
\all{\label{kcheck1}\notag\prod_{t\in T:t_s=s}K(t,i_{j(t)})&\geq\min_{(k,l)\in L^c}|u_{t,k}v_{t,l}|^{-(1-\eps)}\\
&\gg n^{\max(1/4-\delta,c)(1-\eps)}\notag\\
&\gg K.} The last line follows from $M<\max(1/4,c)$ which is a consequence of ~\eqref{mdef}, .

If $s_b\geq 2$,
\all{\label{kcheck2}\notag\prod_{t\in T:t_s=s}K(t,i_{j(t)})
&\gg K^2n^{c(1-\eps)}\left(\|u_{t}\|_{\infty}\|v_{t}\|_{\infty}\right)^{-(1-\eps)}K^{s_b-2}\\
&\gg K^{s_b}} where we have used $\|u_t\|_{\infty}\|v_t\|_{\infty}\ll n^{-c}$. Using ~\eqref{kcheck1} and ~\eqref{kcheck2} and taking the product over $s\in S$ gives ~\eqref{eqkspread}. We now define $K^*(t,i)$ in such a way that we have the analogous bound
\eql{\label{k*spread}\prod_{e\in E_{\CT}}K^{(|e|-4)_+}\ll\prod_{j=1}^q\prod_{t\in T_j}K^*(t,i_j).} 

First, order the elements of $T^l_j=\{t^l_{1},t^l_{2},\ldots,t^l_{|T^l_j|}\}$ arbitrarily for $l=1,2,3$. We define the set $C_j\subset T_j$ by the following conditions.
\begin{enumerate}[label=(\roman*)]
\item $t^3_{k}\in C_j\iff k\leq 2$.
\item For $l=1,2$, $t^l_{k}\in C_j\iff k+|T^3_j|\leq 2$.
\end{enumerate} It is easy to verify that $|C_j\backslash T^1_j|,|C_j\backslash T^2_j|\leq2$.	We now define  
\eq{K^*(t,i):=\begin{cases}
          1:t\in C_j\\
					K(t,i): \text{ otherwise }. 
					\end{cases}.}	
We now prove ~\eqref{k*spread}. Fix $e\in E_{\CT}$ and suppose $e\subset T_i\times T_j$. Define $e'\subset e$ by
\eq{e':=\{(s,t)\in e:s\in C_i\text{ or }t\in C_j\}.} Since $|C_i\backslash T^2_i|,|C_j\backslash T^1_j|\leq2$, $|e'|\leq 4$ and we have
\eq{\prod_{e\in\CT}K^{(|e|-4)_+}\leq \prod_{e\in\CT}K^{|e\backslash e'|}.} It thus suffices to show
\eq{\prod_{e\in\CT}K^{|e\backslash e'|}\leq \prod_{j=1}^q\prod_{t\in T_j}K^*(t,i_j).} As in the proof of ~\eqref{eqkspread}, we fix $s\in S$. Let $C:=\bigcup_{j\in[q]}C_j$ and define
\eq{e_s=\{((s,l),(s,l+1)):1\leq l\leq s_b\text{ and }(s,l),(s,l+1)\notin C\}} and
\eq{v_s=\{(s,l):(s,l)\notin C\text{ and }l=2,3,\ldots,s_b\}.}
Since $K(t,i)\geq 1$ for $t=(s,1)$ and $t=(s,s_b+1)$, it suffices to show 
\eq{K^{|e_s|}\leq\prod_{t\in v_s}K(t,i).} If $|e_s|=s_b$, this follows from ~\eqref{kcheck1} and ~\eqref{kcheck2}. Now suppose $|e_s|<s_b$. We first show that $|e_s|\leq|v_s|$. Choose $l^*$ such that $(s,l^*)\in C$ and define the map 
$f:e_s\rightarrow v_s$ by
\eq{f((s,l),(s,l+1)):=\begin{cases}
(s,l+1):l\leq l^*-2\\
(s,l):l\geq l^*+1
\end{cases}.} We see that $f$ is injective and hence $|e_s|\leq |v_s|$. Since $K(t,i)\geq K$ for $t\in v_s$, we have
\al{K^{|e_s|}&\leq K^{|v_s|}\\
&\leq \prod_{t\in v_s}K(t,i)} completing the proof of ~\eqref{k*spread}.  

We can now use \eqref{k*spread} in \eqref{eqfmain} to write 
\all{\notag\left|n^{-Q_1}\sum_{F\in\CF_{\{T_1,\ldots,T_q\}}}W(F)\right|&\leq\sum_{i_1,\ldots,i_q\in[n]}\prod_{j=1}^q\prod_{t\in T_j}w(t,i_j)K^*(t,i_j)\\
&=\prod_{j=1}^q\left(\sum_{i_j\in[n]}\prod_{t\in T_j}w(t,i_j)K^*(t,i_j)\right)\\
&=:\prod_{j=1}^qW^*(T_j).} 
We now fix a part of $\CT$, say $T_1$ and consider $W^*(T_1)$. To prove Lemma ~\ref{mainlem}, it suffices to prove the following.

\begin{lem}\label{lem1}
\begin{enumerate}[label=(\roman*)]
\item\label{item1} $W^*(T_1)=O(1)$
\item\label{item2} If $|T_1^3|\geq 1$, then $|W^*(T_1)|=o(1)$ unless $|T^3_1|=|T_1|=2$.
\item\label{item3} $\prod_jW^*(T_j)=o(1)$ unless $|e|=2$ for every $e\in E_{\CT}$.
\end{enumerate}
\end{lem}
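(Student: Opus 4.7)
The plan is to bound each $W^*(T_j)$ by substituting the definitions of $w(t,i)$ and $K^*(t,i)$, obtaining
\[
W^*(T_j)\leq n^{-|T^3_j\cap C_j|/2}\,\alpha^{|T^3_j\setminus C_j|}\sum_{i\in[n]}\prod_{t\in C_j\cap T^1_j}|u_{t,i}|\prod_{t\in C_j\cap T^2_j}|v_{t,i}|\prod_{t\in T^1_j\setminus C_j}|u_{t,i}|^\eps\prod_{t\in T^2_j\setminus C_j}|v_{t,i}|^\eps,
\]
where $\alpha:=n^{-1/2}\max(K^2n^{-c(1-\eps)},K)$. A short case analysis using \eqref{mdef} shows $\alpha=O(n^{-\delta'})$ for some $\delta'>0$. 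A first structural observation is that the hypothesis $|e|\geq 2$ for every $e\in E_{\CT}$ forces $|T_j|\geq 2$: a singleton part $T_j=\{t\}$ supports only a single incident edge, which would then have multiplicity one.

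For part (i), I would bound each $\eps$-powered factor crudely by $1$ (since $u_i,v_i$ are unit vectors) and estimate $\Sigma_j:=\sum_i\prod_{t\in C_j\cap T^1_j}|u_{t,i}|\prod_{t\in C_j\cap T^2_j}|v_{t,i}|$. The number of factors in the product inside $\Sigma_j$ is at most $4$ when $|T^3_j|=0$, at most $2$ when $|T^3_j|=1$, and $0$ when $|T^3_j|\geq 2$, by the definition of $C_j$. When this number is $\geq 2$, Cauchy--Schwarz gives $\Sigma_j\leq 1$ (using $\|u_t\|_2=\|v_t\|_2=1$); when it is $0$ or $1$, the resulting $n$ or $\sqrt n$ from the trivial sum is absorbed by the prefactor $n^{-|T^3_j\cap C_j|/2}\alpha^{|T^3_j\setminus C_j|}$, which is respectively $n^{-1}$ or $n^{-1/2}$ in those cases. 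The constraint $|T_j|\geq 2$ rules out the pathological case $|T^3_j|=0$ with $|T^1_j|+|T^2_j|\leq 1$. Collecting these gives $W^*(T_j)=O(1)$.

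For part (ii), when $|T^3_j|\geq 1$ the prefactor is already $o(1)$. The H\"older estimate $\sum_i|u_{t,i}|^\eps\leq n^{1-\eps/2}$ (from $\|u_t\|_2=1$) converts any surviving $\eps$-factor into an $n^{1-\eps/2}$ bound on the sum, and combined with the prefactor this yields $o(1)$ except when no $\eps$-factor is present and the prefactor is exactly $n^{-1}$, i.e., when $T_j=T^3_j$ with $|T^3_j|=2$. Apparently problematic configurations such as $|T^3_j|=1,\,|T_j|=2$ (which would superficially give $n^{-1/2}\cdot\sqrt n=1$) are ruled out by the edge constraint: a $T^3$ element contributes one incoming and one outgoing edge-endpoint to $T_j$, so $|T^3_j|\geq 1$ forces $|T^1_j|,|T^2_j|\geq 1$ (otherwise one direction of incident edge is unpaired, producing some $|e|=1$).

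For part (iii), if any $W^*(T_j)=o(1)$ the conclusion is immediate, so I may assume every $T_j$ has $|T^3_j|=0$ or is the exception $|T^3_j|=|T_j|=2$. Suppose $e_0=E^{\CT}_{i_0,j_0}\in E_{\CT}$ has $|e_0|\geq 3$. The $T_{i_0}$-endpoints of edges in $e_0$ are distinct, so $|e_0|\leq|T_{i_0}|,|T_{j_0}|$; both parts then have size $\geq 3$ and cannot be exceptions, giving $|T^3_{i_0}|=|T^3_{j_0}|=0$. The elements of $T_{i_0}$ contributing to $e_0$ have successors in $T_{j_0}$, so they lie in $T^1_{i_0}$; symmetrically those of $T_{j_0}$ in $e_0$ lie in $T^2_{j_0}$. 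Thus $|T^1_{i_0}|,|T^2_{j_0}|\geq 3$, leaving in each of $W^*(T_{i_0}),W^*(T_{j_0})$ at least one $\eps$-factor uncovered by $C_{i_0},C_{j_0}$. A joint H\"older estimate on $W^*(T_{i_0})W^*(T_{j_0})$, matching the excess $|u|^\eps$-factor in $T_{i_0}$ with the excess $|v|^\eps$-factor in $T_{j_0}$ via the hypothesis $\|u\|_\infty\|v\|_\infty\ll n^{-c}$, yields $W^*(T_{i_0})W^*(T_{j_0})=o(1)$ and hence $\prod_j W^*(T_j)=o(1)$. This coordinated estimate, which necessarily goes beyond the per-part bounds used in (i) and (ii), is where I expect the main technical difficulty of the proof to lie.
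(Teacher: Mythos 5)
Your proposal follows the same overall strategy as the paper---bounding $W^*(T_j)$ by separating the $C_j$-factors from the $\eps$-powered factors and the $T^3$-prefactor, then applying Cauchy--Schwarz and H\"older---and parts (i) and (ii) come out correctly. One intermediate claim is overstated: ``$|T^3_j|\geq 1$ forces $|T^1_j|,|T^2_j|\geq 1$'' is false in general, since $T_j$ could consist of two $T^3$ elements whose incoming (resp.\ outgoing) edges pair with each other---this is exactly the allowed exception $|T^3_j|=|T_j|=2$. The claim holds when $|T^3_j|=1$ (the two incident edges of a $T^3$ element cannot both be matched inside a size-two part containing only one other $T^3$ element), which is the only place you invoke it, so (ii) survives. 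Your explicit observation that $|T_j|\geq 2$ is correct and is used implicitly by the paper.

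The genuine gap is in (iii). To close the argument you bound the leftover $\eps$-powered factors by $\|u\|_\infty\|v\|_\infty\ll n^{-c}$, obtaining decay $n^{-c\eps}$. This degenerates to $O(1)$ when $c=0$, which includes the local case (finitely supported eigenvectors) that the theorem must cover, so this step does not go through in general. The paper's proof is structured differently here: once $|e|\geq 3$ and, by (ii), the two endpoint parts are $T^3$-free, every path edge in $e$ must come from a length-one path (tail in $T^1$ with $t_e=1$ and head in $T^2$ with $t_e=t_b+1=2$ forces $t_b=1$). For such terms the weight $W_s(F)$ carries the indicator $\mathbbm{1}_{[(F(s,1),F(s,2))\in L^c]}$, and on $L^c$ one has $|u_{s_3,i}v_{t_3,j}|<n^{-1/4+\delta}$ by the very construction of $L$. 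That gives a decay $n^{-\eps(1/4-\delta)}$ uniform in $c\geq 0$---this is precisely the role of the $L^c$ restriction and of the $1/4$ appearing in \eqref{mdef}. To repair your argument you must keep the $L^c$ indicator through the factorization into $W^*(T_{i_0})W^*(T_{j_0})$ (or bound the joint bilinear sum directly over $(i,j)\in L^c$, as the paper does) rather than dropping it and relying only on $\|u\|_\infty\|v\|_\infty\ll n^{-c}$.
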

\begin{proof}
We first show that 
\eql{w(t,i)K(t,i)=\begin{cases}
O(1):t\in T_1^1\cup T_1^2\\
o(1):t\in T^3_1\label{t3bound}
\end{cases}}
using ~\eqref{wtidef}, ~\eqref{ktidef} and ~\eqref{mdef}. Suppose $t\in T^1$. Then $w(t,i)K(t,i)\leq |u_i|^{\eps}=O(1)$. We have a similar bound for $t\in T^2$. Finally, if $t\in T^3$, then 
\eq{w(t,i)K(t,i)=n^{-1/2}\max(K^2n^{-c(1-\eps)},K).} Since $K=o(n^M)$ and $M\leq \min(1/2, c)$, we have the desired bound. This implies in particular that for any $D\subset C_1$, 
\eql{\label{dsum}W^*(T_1)\ll\sum_{i\in[n]}\prod_{t\in D}w(t,i).} 

We prove Lemma~\ref{lem1}.\ref{item2} first. For $u$ and $v$ unit vectors in $\C^n$, we will need the estimate 
\eql{\label{uepsbound}\sum_{i\in[n]}|u_i|^{\eps}\ll O(n^{1-\eps/2})} which follows from H\"{o}lder's inequality. 
Suppose $|T^3_1|=1$. Then, since each edge has multiplicity at least $2$, we must have $|T^1_1|,|T^2_1|\geq 1$. Applying ~\eqref{dsum} with $D=C_1=\{t^1_1,t^2_1,t^3_1\}$, we have that for some $(u,v)$, 
\al{W^*(T_1)&\ll \sum_{i=1}^n n^{-1/2}|u_i||v_i|\\
&\leq O(n^{-1/2}).} If $|T^3_1|\geq 3$, then $C_1=\{t^3_1,t^3_2\}$ and
\al{W^*(T_1)&\ll \sum_{i\in[n]}n^{-1}K^*(t^3_3,i)\\
&=o(1)} by ~\eqref{t3bound}. Finally, suppose $|T^3_1|=2$. Suppose $|T^1_1|\geq 1$. Then from ~\eqref{uepsbound}, we have 
\eq{W^*(T_1)\ll\sum_{i\in[n]}n^{-1}|u_i|^{\eps}=o(1).} We have a similar estimate if $|T^2_1|\geq 1$. We conclude that if $|T^3_1|\geq 1$, $W^*(T_1)=o(1)$ unless $|T^3_1|=2$ and $|T^1_1|=|T^2_1|=0$, in which case $W^*(T_1)=O(1)$.

We now prove \ref{item3}. Assume first that $e$ is an edge incident to distinct vertices, say $e\subset T_1\times T_2$, and that $|e|\geq 3$. By \ref{item2}, we may assume $T^3_1=T^3_2=\emptyset$. Since $|T^1_1|,|T^2_2|\geq 3$, we may choose $(s_i,t_i)\in e$ for $i=1,2,3$ where $s_i\in T^1_1$ and $t_i\in T^2_2$ and let $C_1=\{s_1,s_2\}$ and $C_2=\{t_1,t_2\}$. Then bounding $W^*(T_1)W^*(T_2)$ by the contribution from $(s_i,t_i)_{i=1}^3$, we have
\al{W^*(T_1)W^*(T_2)&\ll \sum_{(i,j)\in L^c}\prod_{k=1}^2|u_{s_k,i}v_{t_k,j}||u_{s_3,i}v_{t_3,j}|^{\eps}\\
&\leq \max_{(i,j)\in L^c}|u_{s_3,i}v_{t_3,j}|^{\eps}\sum_{i\in[n]}|u_{s_1,i}||u_{s_2,i}|\sum_{j\in[n]}|v_{t_1,j}||v_{t_2,j}|\\
&=o(1).} 
We have a similar bound if $e$ is a loop at say $T_1$.

To complete the proof of the lemma, it remains to prove \ref{item1} in the cases not covered by \ref{item2} and \ref{item3}. Thus, set $|T_1^3|=0$ and assume without loss of generality that $|T_1^1|\geq 2$. Then with $D=\{t^1_1,t^2_1\}=:\{s,t\}$ in ~\eqref{dsum} we have 
\al{W^*(T_1)&\ll \sum_{i\in[n]}|u_{s,i}u_{t,i}|\\
&=O(1).}
\end{proof}

\section{Proof of Lemma~\ref{Sclt}}\label{secpaths}

Recall the bilinear average of the normalized resolvent introduced in ~\eqref{slamdef} in Section \ref{secprop}. In this section we control the tail of its Neumann series and, with the help of Proposition~\ref{myclt}, obtain the joint limiting distribution of such terms in Lemma~\ref{Sclt}. This is the main ingredient in the proof of Theorem~\ref{delocthm} which is presented in the next section.
 
\begin{lem}\label{Sclt}
Fix complex numbers $\theta_1,\ldots,\theta_a$ with $|\theta_j|>1$ for $j\in[a]$ and suppose $\lambda_j=\lambda_{n,j}\rightarrow_{P}\theta_j$ as $n\rightarrow\infty$. Let $(u_i,v_i)_{i=1}^p$ be $p$ pairs of vectors satisfying the hypotheses of Proposition \ref{myclt}. Let
\eq{S_{i,j}:=\sum_{k\geq 1}\frac{\sqrt{n}\left<(\frac{X}{\sqrt{n}})^kv_i,u_i\right>}{\lam_j^k}=:\sum_{k\geq 1}\frac{Z_{i,k}}{\lam_j^k}.} Recall the definition of $(G_{i,1})_{i=1}^p$ from Proposition ~\ref{myclt} and define centered complex Gaussians $(g_{i,j})_{i=1,j=1}^{p,a}$ independent of $(G_{i,1})_{i=1}^p$ with mixed second moments given by 
\eql{\label{Smom}\E g^{(d_1)}_{i,j}g^{(d_1)}_{i',j'}=
\frac{(\E x^{(d_1)}x^{(d_2)})^2}{\theta_j\theta_{j'}(\theta_j\theta_{j'}-\E x^{(d_1)}x^{(d_2)})}
U^{(d_1),(d_2)}_{i,i'}V^{(d_1),(d_2)}_{i,i'}.} Then
\eq{(S_{i,j})_{i=1,j=1}^{p,a}\Rightarrow (F_{i,j})_{i=1,j=1}^{p,a}} where 
\eql{\label{Fdeflem}F_{i,j}:=\frac{G_{i,1}}{\theta_j}+g_{i,j}.} 
\end{lem}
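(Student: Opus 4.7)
My plan is a truncation argument: split $S_{i,j}$ into a finite partial sum plus a tail, apply Proposition~\ref{myclt} to the partial sum, identify the Gaussian covariance by summing a geometric series, and control the tail uniformly in $n$. Fix $m\geq 2$ and write $S_{i,j}=S^{(m)}_{i,j}+R^{(m)}_{i,j}$ with
\[S^{(m)}_{i,j}:=\sum_{k=1}^m Z_{i,k}/\lam_j^k,\qquad R^{(m)}_{i,j}:=\sum_{k>m}Z_{i,k}/\lam_j^k.\]
By Proposition~\ref{myclt}, $(Z_{i,k})_{i\in[p],k\in[m]}\Rightarrow(G_{i,k})_{i,k}$, with $G_{i,1}$ as in \eqref{G1def} and $(G_{i,k})_{k\geq 2}$ centered complex Gaussians independent of $(G_{i,1})$. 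Since $\lam_j\rightarrow_P\theta_j$ with $\theta_j$ deterministic, Slutsky combined with the continuous mapping theorem yields
\[\bigl(S^{(m)}_{i,j}\bigr)_{i,j}\;\Rightarrow\;\Bigl(\tfrac{G_{i,1}}{\theta_j}+g^{(m)}_{i,j}\Bigr)_{i,j},\qquad g^{(m)}_{i,j}:=\sum_{k=2}^m G_{i,k}/\theta_j^k.\]

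The finite-$m$ limit $(g^{(m)}_{i,j})_{i,j}$ is a centered complex Gaussian vector, independent of $(G_{i,1})$ by part (ii) of Proposition~\ref{myclt}. Its mixed second moments are computed from \eqref{covariance} using the factorization $C^{(d_1),(d_2)}_{i,i'}=U^{(d_1),(d_2)}_{i,i'}V^{(d_1),(d_2)}_{i,i'}$ immediate from \eqref{lim1}--\eqref{lim1new}; the resulting sum in $k$ is geometric with common ratio of modulus at most $1/(|\theta_j||\theta_{j'}|)<1$, so $g^{(m)}_{i,j}\to g_{i,j}$ in $L^2$ as $m\to\infty$ with covariance matching \eqref{Smom}. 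Combining this with the tail estimate of the next paragraph via the standard approximation-in-distribution argument (cf.\ Lemma~\ref{lemprob}) yields $(S_{i,j})\Rightarrow(F_{i,j})$.

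The main obstacle is the tail bound
\[\lim_{m\to\infty}\limsup_{n\to\infty}\P\bigl(|R^{(m)}_{i,j}|>\eps\bigr)=0.\]
Fix $\rho\in(1,|\theta_j|)$; then $\P(|\lam_j|<\rho)\to 0$ by hypothesis, and on the complementary event Minkowski's inequality gives
\[\Bigl(\E\bigl[|R^{(m)}_{i,j}|^2\mathbbm{1}_{|\lam_j|\geq\rho}\bigr]\Bigr)^{1/2}\leq\sum_{k>m}\frac{(\E|Z_{i,k}|^2)^{1/2}}{\rho^k},\]
after which Chebyshev's inequality closes the argument. The naive operator-norm estimate $|Z_{i,k}|\leq\sqrt{n}\,\|(X/\sqrt{n})^k\|$ is useless: Theorem~\ref{opnorm} yields $|Z_{i,k}|=O(\sqrt{n}(k+1))$, which is unbounded in $n$. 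One must instead exploit cancellation in the bilinear form through a uniform polynomial-in-$k$ bound on $\E|Z_{i,k}|^2$, which is precisely the content of the forthcoming Lemma~\ref{pathlemma} that generalizes Lemma~\ref{lemtao} to $k$ growing polynomially in $n$.
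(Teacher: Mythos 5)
Your partial-sum treatment of $S^{(m)}_{i,j}$ (Proposition~\ref{myclt} plus Slutsky and continuous mapping) and your geometric-series computation of the limiting covariance match the paper exactly, and the framing via a Billingsley-type approximation lemma is the right skeleton. The gap is in the tail bound. With a single cutoff $m$ independent of $n$, the remainder $R^{(m)}_{i,j}=\sum_{k>m}Z_{i,k}/\lam_j^k$ involves \emph{all} $k>m$, including $k$ that grow arbitrarily fast relative to $n$. Your Minkowski estimate requires $\E|Z_{i,k}|^2=O(1)$ (or at least $O(\mathrm{poly}(k))$) uniformly in $n$ for every such $k$, but Lemma~\ref{pathlemma} only delivers $\E|Z_{i,k}|^2=O(1)$ in the range $k\ll n^{c}$ for some small $c>0$; its proof genuinely breaks for larger $k$ (the path-counting estimates need $k=o(n^{\eps/7})$). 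So the cited lemma does not close your argument.

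Relatedly, your dismissal of the operator-norm bound as ``useless'' is not correct, and seeing why points to the fix. Theorem~\ref{opnorm} gives, for a fixed $l$ with $l+1<(1+2\delta)^l$, that $\|(X/\sqrt{n})^l\|^{1/l}<1+2\delta$ w.h.p.; submultiplicativity then gives $\|(X/\sqrt{n})^k\|=O_l((1+2\delta)^k)$ for all $k$. On the high-probability event $|\lam_j|>1+3\delta$, the tail contribution from $k>T_n$ is $O_l(\sqrt{n}\sum_{k>T_n}((1+2\delta)/(1+3\delta))^k)$, which is $o(1)$ as soon as $T_n=\omega(\log n)$ since the geometric decay beats the $\sqrt{n}$ prefactor. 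The paper therefore splits into \emph{three} pieces, $S^A$ (first $m$ terms), $S^B$ (terms $m+1$ to $T_n=\log^2 n$), $S^C$ (terms $>T_n$): $S^A$ is handled as you do, $S^B$ via Lemma~\ref{pathlemma} (legitimate because $\log^2 n=o(n^c)$), and $S^C$ via the operator-norm submultiplicativity argument above. Introducing that second, $n$-dependent cutoff $T_n$ is the missing ingredient in your two-piece decomposition.
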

To prove the lemma, we split $S_{i,j}$ into three sums as follows. Fix cutoffs $m>0$ and $T_n=\log^2n$ ($T_n=\omega(\log n)$ suffices) and define 
\al{S_{i,j}&=
\sum_{k=1}^m\frac{Z_{i,k}}{\lam_j^k}+
\sum_{k=m+1}^{T_n}\frac{Z_{i,k}}{\lam_j^k}+
\sum_{k>T_n}^{\infty}\frac{Z_{i,k}}{\lam_j^k}\\
&=:S_{i,j}^A+S_{i,j}^B+S_{i,j}^C.}
We define \eql{\label{Tdef}T_{i,j}^A:=\sum_{k=1}^m\frac{G_{i,k}}{\theta_j^k}} where the $G_{i,k}$ are defined as in the statement of Proposition~\ref{myclt}. Note that $T_{i,j}^A$ is independent of $n$.

By Proposition \ref{myclt} and the multivariate version of Slutsky's theorem (see \cite{billingsley2}), 
\eq{((Z_{i,k}),(\lam_j))\Rightarrow ((G_{i,k}),(\theta_j)),} where the joint convergence is over all $i\in[p]$, $k\in[m]$ and $j\in[a]$. By the continuous mapping theorem, $(S_{i,j}^A)\Rightarrow(T_{i,j}^A)$ jointly for $i\in[p]$ and $j\in[a]$. By the definitions of $T_{i,j}^A$ in ~\eqref{Tdef} and of $G_{i,k}$ in ~\eqref{G1def} and ~\eqref{covariance}, and by inspecting ~\eqref{Smom} and ~\eqref{Fdeflem}, we see that
\eq{T^A_{i,j}\stackrel{m\rightarrow\infty}{\Longrightarrow} F_{i,j}} jointly.

To prove Lemma~\ref{Sclt}, it suffices to prove

\begin{lem}\label{Svar}
\begin{enumerate}[label=(\alph*)]
\item $\lim_{m\rightarrow\infty}\lim_{n\rightarrow\infty}\E|S^B|=0$ and
\item $\lim_{m\rightarrow\infty}\lim_{n\rightarrow\infty}\E|S^C|=0$.
\end{enumerate}
where we have suppressed the $i$ and $j$ dependence for $S^B_{i,j}$ and $S^C_{i,j}$.
\end{lem}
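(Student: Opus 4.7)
The plan is to handle $S^B$ and $S^C$ by different mechanisms, reflecting the distinct regimes: $S^B$ runs over $m<k\le T_n=\log^2 n$ and calls for a moment bound on $Z_{i,k}$ that is uniform in $k$, while $S^C$ runs over $k>T_n$ and calls for operator-norm control on $(X/\sqrt n)^k$ at a growth rate strictly below $|\theta_j|$.

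For part (a), I would first apply the generalization of Lemma~\ref{lemtao} allowing $k$ to grow polynomially with $n$ (Lemma~\ref{pathlemma}, referenced in Subsection~\ref{subsec-pl}) to obtain $\E|Z_{i,k}|^2\le P(k)$ for some fixed polynomial $P$, uniformly over $k\le T_n$. Fix $r\in(1,\min_j|\theta_j|)$; since $\lam_j\rightarrow_P\theta_j$, the event $E:=\bigcap_j\{|\lam_j|\ge r\}$ has probability $1-o(1)$. On $E$, $|S^B_{i,j}|\le\sum_{k>m}|Z_{i,k}|/r^k$, and Cauchy--Schwarz yields
\eq{\E\bigl[|S^B_{i,j}|\,\mathbbm{1}_E\bigr]\le\sum_{k=m+1}^{T_n}\frac{\sqrt{P(k)}}{r^k},}
which is the tail of a convergent series, hence $o_m(1)$ uniformly in $n$. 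The contribution of $E^c$ is dispatched by $\E[|S^B_{i,j}|\mathbbm{1}_{E^c}]\le(\E|S^B_{i,j}|^2)^{1/2}\P(E^c)^{1/2}$, using a crude deterministic polynomial-in-$n$ bound on $|S^B_{i,j}|^2$ valid on the overwhelming-probability event that $\|X/\sqrt n\|=O(1)$ (available from Theorem~\ref{opnorm}).

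For part (b), on the event $E$ above one has $|S^C_{i,j}|\le\sqrt n\sum_{k>T_n}\|(X/\sqrt n)^k\|/r^k$, so it remains to control the operator norms $\|(X/\sqrt n)^k\|$ uniformly in $k>T_n$. The crucial ingredient is Theorem~\ref{opnorm}: for any fixed $k_0$, $\|(X/\sqrt n)^{k_0}\|^{1/k_0}\to 1$ almost surely. Fix $\gamma\in(1,r)$ and choose $k_0$ large so that the event $\{\|(X/\sqrt n)^{k_0}\|\le\gamma^{k_0}\}$ has probability $1-o(1)$; on this event, writing $k=qk_0+s$ with $0\le s<k_0$ and using submultiplicativity of the operator norm together with the crude bound $\|X/\sqrt n\|=O(1)$ w.h.p.\ for the residual factor $\|(X/\sqrt n)^s\|$, one obtains $\|(X/\sqrt n)^k\|\ll\gamma^k$ uniformly in $k$. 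Therefore, on the intersection of this event with $E$,
\eq{|S^C_{i,j}|\ll\sqrt n\sum_{k>T_n}(\gamma/r)^k\ll\sqrt n\,(\gamma/r)^{T_n}=o(1),}
since $\gamma/r<1$ and $T_n=\log^2 n$ makes $(\gamma/r)^{T_n}$ decay faster than any polynomial in $n$, comfortably overpowering $\sqrt n$. The exceptional low-probability event is handled by the same Cauchy--Schwarz device as in part (a).

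\textbf{Main obstacle.} The main difficulty I anticipate is the clean passage from these high-probability geometric-type estimates to the $L^1$ bounds demanded by the lemma: since $\lam_j$ enters in denominators, both $|S^B|$ and $|S^C|$ can be enormous on the exceptional events, and $L^1$ convergence requires more than just convergence in probability. This is resolved by pairing the vanishing probability of the exceptional events with a deterministic polynomial-in-$n$ backstop bound that holds on an overwhelming-probability event, and then applying Cauchy--Schwarz; this is routine, but requires some careful bookkeeping to align the exponents.
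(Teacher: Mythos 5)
Your treatment of the main terms matches the paper's: on the good event $E_n$ (the paper takes $\{|\lam_{n,j}-\theta_j|<\tfrac{|\theta_j|-1}{4}\}$, you take $\{|\lam_j|\ge r\}$ — same idea), part (a) follows from Lemma~\ref{pathlemma} (which in fact gives the stronger uniform bound $\E|Z_k|^2=O(1)$, not just $O(P(k))$) plus a geometric series, and part (b) follows from Theorem~\ref{opnorm}, the choice of $l$ (your $k_0$) with $\|(X/\sqrt n)^l\|^{1/l}<1+2\delta$, submultiplicativity, and $T_n=\log^2 n$. This is exactly the paper's argument.

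The gap is in the step you flag as the "main obstacle" and then declare routine: there is no deterministic polynomial-in-$n$ backstop bound on $|S^B|$ or $|S^C|$ on the event $\{\|X/\sqrt n\|=O(1)\}$, so the Cauchy--Schwarz device $\E[|S^B|\mathbbm{1}_{E^c}]\le(\E|S^B|^2)^{1/2}\P(E^c)^{1/2}$ does not go through. Two things break. First, on $E^c$ the modulus $|\lam_j|$ is entirely uncontrolled — the hypothesis $\lam_j\to_P\theta_j$ gives no quantitative tail bound, and $|\lam_j|$ can be arbitrarily close to $0$ with positive probability — so $|\lam_j|^{-k}$ for $k$ up to $T_n=\log^2 n$ is not bounded by any power of $n$. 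Second, even setting the denominator aside, on $\{\|X/\sqrt n\|\le C\}$ with $C\approx 2$ (the almost-sure limit from Theorem~\ref{opnorm}) you only get $|Z_{i,k}|\le\sqrt n\,C^k$, and $C^{T_n}=2^{\log^2 n}=n^{(\log 2)\log n}$ is superpolynomial, so $\E|S^B|^2$ is not polynomially controlled in any case. The paper sidesteps this entirely: it observes that since $\P(E_n)=1-o(1)$, it suffices to prove Lemma~\ref{Svar} on $E_n$ (i.e.\ the restricted statement $\E[|S^B|\mathbbm{1}_{E_n}]\to 0$ and likewise for $S^C$), because the downstream conclusion of Lemma~\ref{Sclt} is only a convergence in distribution, for which restricting to a probability-$(1-o(1))$ event is harmless. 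So the unrestricted $L^1$ statement you attempt is never actually established in the paper, and your proposed route to it does not close.
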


Define the event 
\eql{\label{event}E_n:=\{|\lam_{n,j}-\theta_j|<\delta_j:=\frac{|\theta_j|-1}{4}\text{ for all }j\in [a]\}.} 
By hypothesis $\P(E_n)=1-o(1)$ so it suffices to prove Lemma~\ref{Sclt} (and hence Lemma~\ref{Svar}) on $E_n$. In the following, we fix an index $j$ and set $\delta:=\frac{|\theta|-1}{4}$. Note that we have
\eql{\label{lamtheta}|\lambda|>1+\frac{3}{4}(|\theta|-1).}
We prove Lemma~\ref{Svar}b first.
\begin{proof}
Recall that on $E_n$, $|\lam|>1+3\delta$ (see ~\eqref{event}). By Theorem~\ref{opnorm}, $\rho(X/\sqrt{n})<1+\delta$ w.h.p. and we can choose $l$ such that $\|(\frac{X}{\sqrt{n}})^l\|^{1/l}<1+2\delta$. We may assume without loss of generality that these events occur on $E_n$.  By submultiplicativity of the operator norm, 
\al{\left\|\left(\frac{X}{\sqrt{n}}\right)^k\right\|&\leq  \left\|\left(\frac{X}{\sqrt{n}}\right)^l\right\|^{\lfloor\frac{k}{l}\rfloor}\max_{0\leq i<l}\left\|\left(\frac{X}{\sqrt{n}}\right)^i\right\|\\
&\leq O_l(1+2\delta)^k \text{w.h.p.}}

By the Cauchy-Schwarz inequality, we have 
\al{|S_C|&\leq \sum_{k> T_n}\frac{\sqrt{n}\left\|\left(\frac{1}{\sqrt{n}}X\right)^k\right\||u|_2|v|_2}{|\lam|^k}\\
&<O_l(\sqrt{n})\sum_{k> T_n}\left(\frac{1+2\delta}{1+3\delta}\right)^{k}\\
&=o(1)} where the last line follows from our choice of $T_n=\log^2n$.
\end{proof}

To prove Lemma~\ref{Svar}a, we will need 
\begin{lem}\label{pathlemma}
Let $u$ and $v$ be unit vectors in $\C^n$ and set
\eq{Z_k:=\sqrt{n}u^*\left(\frac{1}{\sqrt{n}}X\right)^kv.}

Fix $\epsilon>0$ and assume $|x|\leq K=O(n^{\frac{1-\epsilon}{2}})$. Then there exists $c=c(\epsilon)>0$ such that for all $k\ll n^c$, 
\eql{\E |Z_k|^2=O(1)\label{pl2}.} 
\end{lem}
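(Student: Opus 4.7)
The plan is to prove Lemma \ref{pathlemma} via the moment method, extending the path-counting argument of \cite[Lemma 2.3]{tao} from fixed $k$ to $k$ polynomially large in $n$. Expanding entrywise,
\[|Z_k|^2 = n^{1-k}\sum_{\pi_1,\pi_2}\overline{u_{i_0}}u_{j_0}v_{i_k}\overline{v_{j_k}}\prod_{\ell=0}^{k-1}X_{i_\ell i_{\ell+1}}\overline{X_{j_\ell j_{\ell+1}}},\]
where the sum runs over ordered pairs of length-$k$ paths $\pi_1=(i_0,\ldots,i_k)$ and $\pi_2=(j_0,\ldots,j_k)$ in $[n]$. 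After taking expectations, independence across distinct directed edges together with $\E X=0$ force the surviving pairs to be those where every directed edge has total multiplicity at least $2$ in the combined multigraph; for these, the crude bound $|\E X^p\overline{X}^q|\leq\E|X|^{p+q}\leq K^{p+q-2}$ gives a moment factor of at most $K^{2k-2E}$, where $E$ is the number of distinct directed edges used.

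Next, I plan to group the surviving pairs by their combinatorial \emph{shape} --- the pattern of coincidences among the $2(k+1)$ labels $(i_0,\ldots,i_k,j_0,\ldots,j_k)$ --- and estimate each shape's contribution. A shape with $V$ distinct vertices has at most $n^V$ labeled realizations. The endpoint weight $|u_{i_0}u_{j_0}v_{i_k}v_{j_k}|$ is controlled using $\|u\|_2=\|v\|_2=1$: when $i_0=j_0$ and $i_k=j_k$ are merged, summing over these endpoints telescopes to $1$, while Cauchy--Schwarz on unmerged endpoints yields at most $\sqrt n$ each. The \emph{diagonal} shape, in which $\pi_1$ and $\pi_2$ trace identical simple paths with merged endpoints ($V=k+1$, $E=k$), contributes exactly $n^{1-k}\cdot n^{k-1}=O(1)$ and provides the main term.

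It remains to show that non-diagonal shapes are negligible. Every deviation from the diagonal either reduces $E$ via an edge-merge --- replacing a factor of $n$ in the vertex count with at most $K^2\leq n^{1-\eps}$ in the moment bound, a net savings of $n^{-\eps}$ --- or leaves an endpoint unmerged, trading a factor of $n$ for $\sqrt n$, a net savings of $n^{-1/2}$. Thus each shape of \emph{deficiency} $D\geq 1$ relative to the diagonal contributes $O(n^{-c'(\eps)D})$ for some $c'(\eps)>0$.

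The main obstacle is combinatorial: the number of shapes of deficiency $D$ can grow as $k^{O(D)}$, so the total bad contribution is bounded by $\sum_{D\geq 1}k^{O(D)}n^{-c'(\eps)D}$. This geometric series is $O(1)$ precisely when $k\ll n^{c(\eps)}$ for a sufficiently small $c(\eps)>0$, which is the polynomial bound asserted in the lemma. The hypothesis $K=O(n^{(1-\eps)/2})$ plays the critical role of securing the $n^{-\eps}$ savings per edge-merge needed to close this sum.
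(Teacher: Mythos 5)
Your proposal takes essentially the same approach as the paper: expand $\E|Z_k|^2$ as a sum over pairs of paths, restrict to terms where every directed edge has multiplicity at least two, identify the diagonal (identical-path, all-distinct-vertices) shape as the unique $O(1)$ main term, use the truncation $|x|\le K=O(n^{(1-\eps)/2})$ to bound high-moment edge contributions, and close a geometric series over deficiency provided $k\ll n^{c(\eps)}$. The one place you are lighter than the paper is the step "the number of shapes of deficiency $D$ can grow as $k^{O(D)}$": the paper spends most of its effort making this precise, splitting the count into $N_2(t)$ (choosing the edge set, bounded by $\frac{k^{3t+3}(k-t)!}{(t+1)!t!}$ via a stars-and-bars argument on out-degrees) and $N_3(t)$ (choosing which occurrences trace which edge, bounded by $(t+1)^{4t}$), and you would need to carry out something equivalent to justify the exponent. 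Your accounting of the endpoint loss (merged endpoints give $\sum_i|u_i|^2=1$, singletons give $\sum_i|u_i|\le\sqrt n$ by Cauchy--Schwarz) also matches the paper's use of $n^{c_P/2}$ where $c_P$ counts singleton endpoint classes.
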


Assuming Lemma ~\ref{pathlemma} we prove Lemma~\ref{Svar}a on $E_n$. Since $(\E|Z|)^2\leq\E|Z|^2$, we have
\al{\E |S^B|&\leq\sum_{k=m+1}^{T_n}\E\frac{|Z_k|}{|\lam|^k}\\
&\ll|1+\frac{3}{4}(|\theta|-1)|^{-2m}}
where we have used Lemma~\ref{pathlemma} and ~\eqref{lamtheta} in the last line. Lemma~\ref{Svar}(a) follows from letting $m\rightarrow\infty$.

\begin{rem}
Note that by the truncation argument given in Appendix~\ref{sec4eps}, Lemma~\ref{pathlemma}, and hence Lemma~\ref{Svar}a, is valid under the moment hypothesis $\E|x|^{4+\epsilon}<\infty$ for any fixed $\epsilon>0$.
\end{rem}

\subsection{Proof of Lemma 9}\label{subsec-pl}
In this subsection we prove Lemma ~\ref{pathlemma}.

\begin{proof} It suffices to show 
\eql{\label{zpower}\E |u^*X^kv|^2=O(n^{k-1}).} 
Let 
\eq{T:=\{(a,b):a=1,2, b=0,1,\ldots,k\},} 
\eq{T':=\{(a,b)\in T:b<k\}} and 
\eq{E:=\{((a,b),(a,b+1))\in T^2:b<k\}.} Let $T_P:=T|_{a=1}$, $T_Q:=T|_{a=2}$ and for $t\in T'$, set $t^s:=(a,b+1)$.  
We will designate the terms in the expansion of ~\eqref{zpower} by
\eq{\mathcal{P'}:=\{F:T\rightarrow [n]\}.} For $F\in\mathcal{P'}$, let $F_P:=F|_{T_P}$ and $F_Q:=F|_{T_Q}$. Let
\eq{W_{u,v}(F):=|u_{F(1,0)}u_{F(2,0)}v_{F(1,k)}v_{F(2,k)}|} and 
\eq{W_x(F):=\E|\prod_{t\in T'}x_{F(t),F(t^s)}|.} Then we have
\eql{\label{prepathsum}\E |u^*X^kv|^2\leq \sum_{F\in \mathcal{P'}}W_{u,v}(F)W_x(F).} For $F\in\mathcal{P'}$, let 
\eq{E_{F}:=\{(F(t),F(t^s))\in[n]^2: t\in T'\}.} denote the edges of $F$ and let 
\eq{\CE_F:=\{\{t\in T':(F(t),F(t^s))=(i,j)\}:(i,j)\in E^F\}.} Then 
\eq{W_x(F)=\prod_{e\in \CE^F}\E|x|^{|e|}.} Noting that $\E|x|=0$ and letting
\eq{\CP:=\{F\in\CP':|e|\geq 2\text{ for all }e\in \CE^F\},} we have 
\eql{\label{pathsum} \E|u^*X^kv|^2\leq\sum_{F\in\CP}W_{u,v}(F)W_x(F).}

Now, for a fixed $F\in\CP$, let 
\eq{V=V_F:=\{F(t):t\in T\}} be the set of vertices. For $v\in V$ let $m(v)=|F^{-1}(v)|$ denote its multiplicity. Let $d_{\text{in}}(v):=|\{x\in[n]:(x,v)\in E\}|$ and $d_{\text{out}}(v):=|\{x:(v,x)\in E\}|$ denote its indegree and outdegree. Finally, let $d(v):=d_{\text{in}}(v)+d_{\text{out}}(v)$ be the (total) degree of $v$. 

Shown in Figure~\ref{egpath} is an example with $k=4$ with the paths $(1,2,3,4)$ and $(2,3,4,1)$. Each vertex has indegree $2$ and outdegree $2$. 
\begin{figure}
    \centering
    \includegraphics[scale=1.2]{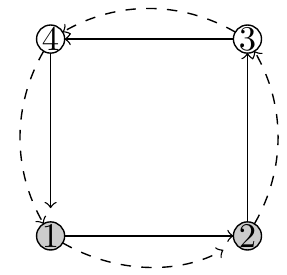}
    \caption{An example of $F\in\CP$ with $k=4$.}
    \label{egpath}
\end{figure}

We will first determine the main term from $\CP$ and its contribution to \eqref{pathsum}. 
\begin{lem}\label{lempmain}
Suppose $F\in\CP$. Then $|V|\leq k+1$ and that equality occurs only when $F_P=F_Q$ and $|F_P|=|F_Q|=k+1$.  
\end{lem}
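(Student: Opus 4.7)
The plan is to view $F$ as specifying two directed walks $P_F = (F(1,0), F(1,1), \ldots, F(1,k))$ and $Q_F = (F(2,0), \ldots, F(2,k))$ on the vertex set $V_F$, with $2k$ directed edges in total. The condition $F \in \CP$ says every class of $\CE_F$ has size $\geq 2$, so the number of distinct directed edges is at most $k$, and so is the number of distinct undirected edges obtained by forgetting orientations. The inequality $|V_F| \leq k+1$ and the equality characterization will both follow from exploiting this edge budget together with the connectivity of each walk and a leaf argument on the underlying graph.

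Let $G$ denote the undirected multigraph underlying the two walks, write $|E_u(G)|$ for its number of distinct undirected edges, and let $c$ be its number of connected components. Each walk is traced within a single component, so $c \leq 2$, and the standard forest bound gives
\[
|V_F| \leq |E_u(G)| + c \leq k + c.
\]
If $c = 1$ we already have $|V_F| \leq k+1$. For $c = 2$, assign the walks to separate components $C_1, C_2$ and write $(n_i, e_i, d_i)$ for the vertex, undirected-edge, and directed-edge counts in $C_i$. Since only one walk visits $C_i$, every distinct directed edge there must be traversed $\geq 2$ times by that walk alone, forcing $d_i \leq k/2$ and hence $n_i \leq e_i + 1 \leq k/2 + 1$. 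A short case check shows that $n_1 + n_2 \geq k+1$ would force some $C_i$ to be a tree with every undirected edge single-oriented; but then a leaf $v$ of such a tree has a single incident directed edge of multiplicity $\geq 2$, which is impossible because the walk has either in-degree $0$ at $v$ (so it can leave $v$ at most once, only if it starts there) or out-degree $0$ (only if it ends there). Thus the $c = 2$ case gives $|V_F| < k+1$.

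For equality $|V_F| = k+1$, the argument above forces $c = 1$ and $|E_u(G)| = k$, so $G$ is a spanning tree on $V_F$. With total multiplicity $2k$ distributed over $k$ undirected edges each of multiplicity $\geq 2$, every undirected edge has multiplicity exactly $2$; no undirected edge can carry both directed orientations (that would contribute $\geq 4$), so each undirected edge corresponds to a single directed edge of multiplicity $2$. Reapplying the leaf argument to the combined walks: a leaf with an outgoing directed edge forces \emph{both} walks to start there (each contributes exactly one exit and cannot return), and symmetrically an incoming leaf edge forces both walks to end there. Since each walk has exactly one start and one end, the tree has at most one ``start-leaf'' and one ``end-leaf'', so it has at most two leaves and must be a simple path $w_0 \to w_1 \to \cdots \to w_k$ with $w_0$ the common start and $w_k$ the common end. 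In a tree, the unique walk of length $k$ between two vertices at distance $k$ is the geodesic, so $P_F = Q_F$ equals $(w_0, w_1, \ldots, w_k)$, giving $F_P = F_Q$ and $|F_P| = |F_Q| = k+1$.

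The main obstacle is the $c = 2$ case: the naive edge-plus-components count only yields $|V_F| \leq k+2$, and one needs the directed leaf observation (a vertex with in- or out-degree zero is visited at most once per walk) to sharpen this to $|V_F| < k+1$. The same leaf analysis is then reused in the equality case to collapse the spanning tree to a simple path, so getting the directional counting right is the crux of the argument.
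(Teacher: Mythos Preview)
Your proof is correct, but it takes a different route from the paper's. The paper works directly with the \emph{directed} simple graph $(V_F,E_F)$ and uses a degree-sum count: it first shows that any vertex $v$ with total degree $d(v)=d_{\text{in}}(v)+d_{\text{out}}(v)=1$ must be either the common start $F(1,0)=F(2,0)$ or the common end $F(1,k)=F(2,k)$ (since the unique incident directed edge has multiplicity $\ge 2$, and a single walk can contribute at most one traversal at a source/sink). Hence at most two vertices have $d(v)=1$, and
\[
2k\ \ge\ 2|E_F|\ =\ \sum_{v\in V_F} d(v)\ \ge\ 2(|V_F|-2)+1+1\ =\ 2(|V_F|-1),
\]
giving $|V_F|\le k+1$ in one line, with equality forcing exactly two degree-$1$ vertices, all others of degree $2$, and $|E_F|=k$; from there the identification $F_P=F_Q$ follows (the resulting directed graph is a simple directed path and both walks must trace it).

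Your argument instead passes to the underlying undirected graph, uses the forest bound $|V_F|\le |E_u(G)|+c$, and then has to work to rule out $c=2$ via a separate leaf contradiction; the equality case is handled by showing $G$ is a spanning tree with single-oriented edges of multiplicity exactly $2$, then using the leaf analysis again to collapse it to a directed path. This is sound, and in fact your write-up of the equality case is more explicit than the paper's (which leaves the step from ``two degree-$1$ vertices, the rest degree $2$'' to ``$F_P=F_Q$'' to the reader). The cost is the extra $c=2$ case analysis, which the paper's directed degree-sum avoids entirely: once you know at most two vertices can have $d(v)=1$, connectivity is irrelevant. Your ``short case check'' for $c=2$ is also a bit compressed---it would be cleaner to note that $n_1+n_2\ge k+1$ together with $n_i\le d_i+1\le \lfloor k/2\rfloor+1$ forces some $n_i=d_i+1=e_i+1$, hence that component is a tree with $e_i=d_i$---but the argument is there.
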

Fix $v\in V_F$ and suppose $d(v)=1$. Since each edge has multiplicity at least two, we have the following.
\begin{enumerate}[label=(\roman*)]
\item If $d_{\text{in}}(v)=1$, $v=F(1,0)=F(2,0)$.
\item If $d_{\text{out}}(v)=1$, $v=F(1,k)=F(2,k)$
\end{enumerate} In particular, if two vertices of $V$ have degree $1$, then one has outdegree $1$, the other has indegree $1$ and the rest have both outdegree and indegree of at least $1$. Since $|e|\geq 2$ for each $e\in \CE_F$, we also have $|E_F|\leq k$. Thus   
\al{ 2k\geq 2|E_F|&=\sum_{v\in V}d(v)\\ 
&\geq 1+1+2(|V|-2)\\
&=2(|V|-1).}
Thus, $|V|\leq k+1$ with equality occurring only when two of the vertices have degree $1$ and the rest have degree $2$. This proves the lemma.

We let $\CP_{\text{main}}:=\{F\in\CP:|V_F|=k+1\}$. We also let 
\eq{\CP'_0:=\{F\in\CP: |V_F|=k,F_P=F_Q,F(1,0)=F(2,0)=F(1,k)=F(2,k)\}.} Then, the contribution of 
$\CP_{\text{main}}\cup \CP'_0$ to \eqref{pathsum} is given by 
\eq{\sum_{\substack{F(1,0)=F(2,0)\in[n]\\F(1,k)=F(2,k)\in[n]}}|u_{F(1,0)}|^2|u_{F(1,k)}|^2n^{k-1}=n^{k-1}.}

We partition the remainder of $\CP$ in the following way. First let 
\eq{T_1:=\{(1,0),(2,0),(1,k),(2,k)\}\subset T} be the terms corresponding to the starts and ends of the paths. For $t\geq 0$ and $P$ a partition $T_1$ with $|P|\geq 2$ if $t=0$, let 
\eq{\CP_{P,t}:=\{F\in\CP:|V_F|=k-t,F(s)=F(t)\Leftrightarrow s\sim_P t,s,t\in T_1\}.} Note that we exclude the trivial partition $P=\{T_1\}$ when $t=0$ since $\CP_{\{T_1\},0}=\CP'_0$. We let $\CP_0=\bigcup_{P\neq \{T_1\}}\CP_{P,0}$ and for $t>0$, we let $\CP_t=\bigcup_P\CP_{P,t}$.

\begin{lem}
For $F\in\CP_{t}$, $W_x(F)\ll K^{2t}$.
\end{lem}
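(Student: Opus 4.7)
The aim is to show $W_x(F)\ll K^{2t}$. The plan is to reduce this, via a moment estimate, to the combinatorial inequality $|E_F|\ge |V_F|=k-t$ (or $|V_F|=k$ when $t=0$), and then to establish the latter by a cyclomatic argument driven by the walk structure.

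\textbf{Step 1 (moment estimate).} Since $\E|x|^{2}=1$ and the truncation $|x|\le K$ is in force, we have $\E|x|^{a}\le K^{a-2}$ for each $a\ge 2$. Because $|e|\ge 2$ for every $e\in\CE^F$ and $\sum_{e}|e|=2k$,
\[
W_x(F)=\prod_{e\in\CE^F}\E|x|^{|e|}\le K^{\sum_{e}(|e|-2)}=K^{2k-2|E_F|}.
\]
Thus it suffices to prove $|E_F|\ge k-t$.

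\textbf{Step 2 (no walk is a simple path).} I claim that for $F\in\CP_t$ (with $t\ge 0$), neither $F_P$ nor $F_Q$ can be a simple $(k+1)$-vertex path. Suppose $F_P$ traces distinct vertices $v_0,v_1,\ldots,v_k$ using its $k$ directed edges each exactly once. The constraint $F\in\CP$ forces every edge of $F_P$ to have total multiplicity at least $2$, so $F_Q$ must traverse each of them at least once. But $F_Q$ has length only $k$, so it must use each of those $k$ edges exactly once and no others; in other words, $F_Q$ is an Eulerian trail in the simple directed path $v_0\to v_1\to\cdots\to v_k$. Since $v_0$ has in-degree $0$ and out-degree $1$, and the analogous holds throughout, the only such trail is the path itself, forcing $F_Q=F_P$. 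But then $|V_F|=k+1$, putting $F$ in $\CP_{\text{main}}$, not in $\CP_t$. A symmetric argument rules out $F_Q$ being a simple path.

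\textbf{Step 3 (cyclomatic lower bound).} Since neither walk is a simple path, each visits some vertex twice, and the subwalk between consecutive visits is a closed walk, which contains a directed cycle. The directed graph $(V_F,E_F)$ is the union of the two walk subgraphs and therefore has $c\le 2$ (weakly) connected components; each component contains at least one of the walks in its entirety and hence at least one directed cycle. Euler's formula gives cyclomatic number $|E_F|-|V_F|+c$, and since each of the $c$ components contributes at least one independent cycle, this is at least $c$. Consequently $|E_F|\ge|V_F|$. Combined with $|V_F|=k-t$ (or $|V_F|=k$ when $t=0$ with the non-trivial partition constraint that defines $\CP_0$), this yields $|E_F|\ge k-t$.

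\textbf{Step 4 (conclusion).} Substituting into Step 1, $W_x(F)\le K^{2k-2(k-t)}=K^{2t}$, as claimed.

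\textbf{Main obstacle.} The structural heart is the Eulerian ``no simple path'' observation of Step 2; once that is in hand, Step 3 is routine Euler-formula bookkeeping. The mildly delicate point is checking that Step 3 produces enough cycles in the $c=1$ case: only one cycle is required there, and this is automatic from Step 2. The $c=2$ case requires that both walks contribute cycles, again guaranteed by Step 2. No deeper combinatorial input beyond these observations seems necessary.
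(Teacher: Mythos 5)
Your proof is correct, and it takes a genuinely different route from the paper's. The paper normalizes with the fourth moment, writing $\E|x|^a \ll K^{(a-4)_+}$, and pairs this with the combinatorial bound $|\CE_F|\geq k-t-1$ (at most one vertex lacks an outgoing edge, since the terminal edge of each walk must be traversed twice); it then checks $\sum_e(|e|-4)_+\leq \sum_e(|e|-2)-2\leq 2t$. You instead normalize with the second moment, $\E|x|^a\leq K^{a-2}$, which forces you to prove the slightly stronger combinatorial bound $|E_F|\geq |V_F|=k-t$. You get it from a cyclomatic-number argument: since $|V_F|=k-t<k+1$, each of the two length-$k$ walks $F_P, F_Q$ has a repeated vertex and hence contains a cycle; each weakly connected component of $(V_F,E_F)$ contains at least one walk and so at least one cycle, giving cyclomatic number $\geq c$ and thus $|E_F|\geq |V_F|$. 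This is tidy and structurally transparent; the paper's version trades a weaker combinatorial statement for a more aggressive moment bound. One small remark: your Step 2 is heavier than needed. Once $|V_F|=k-t\leq k$ is known, a walk of length $k$ on $|V_F|\leq k<k+1$ vertices automatically has a repeat, so the Eulerian-trail digression to show $F_Q=F_P$ can be dropped entirely — the contradiction $|V_F|\geq k+1$ is immediate.
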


Since $\E|x|^a\ll K^{(a-4)_+}$, 
\al{W_x(F)&\leq \prod_{e\in\CE_F}\E|x|^{|e|}\\
&\ll \prod_{e\in\CE_F} K^{(|e|-4)_+}.}
It suffices to show that $\sum_{e\in\CE_F}(|e|-4)_+\leq 2t$. Since at most one vertex has no outgoing edge, $|\CE_F|\geq k-t-1$. Also the $|e|$'s satisfy $\sum_{e\in\CE_F}|e|=2k$ and $|e|\geq 2$.  If $|e|\leq 4$ for all $e\in\CE_F$, there is nothing to prove. If $|e_1|\geq 4$ say, then 
\al{\sum_{e\in\CE_F}(|e|-4)_+&=|e_1|-4+\sum_{e\neq e_1}(|e|-4)_+\\
&\leq \sum_{e\in\CE_F}(|e|-2)-2\\
&\leq 2k-2(k-t-1)-2=2t.} 

We now turn to controlling $S_{p,t}:=|\sum_{F\in\CP_{P,t}}W_{u,v}(F)|$. To simplify notation, we will do this for the specific case $P=\{\{(1,0)\},\{(2,0)\},\{(1,k),(2,k)\}\}$. We can bound $S_{P,t}$ by
\eq{\sum_{\substack{i_1,i_2,i_3\in [n]\\\text{distinct}}}
|u_{i_1}u_{i_2}v_{i_3}^2||\{F\in\CP_{t}:F(1,0)=i_1,F(2,0)=i_2,F(1,k)=F(2,k)=i_3\}|.} The cardinality of the last set is independent of the choice of indices $i_1,i_2$ and $i_3$, and in fact only depends on size of the partition $P$. We denote it by $N_{|P|}$. Removing the restriction to distinct indices and using $\sum_i|u_i|=O(\sqrt{n})$, we may bound the contribution as $nN_{|P|}$. 

The case for a general partition is similar and we have the bound 
\eq{S_{P,t}\leq n^{c_P/2}N_{|P|}} where $c_P$ is the number of singletons in the partition $P$. To determine $N_{|P|}$, we first choose the remaining vertices of $V_F$ in $\binom{n}{k-t-|P|}$ ways. We let $N_2=N_2(t)$ be the maximum number of ways to choose $E_F$, over $P$ and $V_F$. Similarly, we let $N_3=N_3(t)$ be the maximum number of ways to choose $\CE_F$, over $P$, $V_F$ and $E_F$. Since
\eq{\binom{n}{k-t-|P|}\leq \frac{n^{k-t-|P|}k^{|P|}}{(k-t)!},} we have
\eq{S_{P,t}\leq (n^{c_P/2-|P|}k^{|P|})\frac{n^{k-t}}{(k-t)!}N_2(t)N_3(t),} with $|P|\geq 2$ if $t=0$. Considering the possibilities for $P$ and setting 
\eq{S_t:=\sum_{P\text{ partition of }T_1}S_{P,t},} we have
\eql{\label{scontrib}S_{t}\ll\left\{
     \begin{array}{ccc}
       k^2n^{k-3/2}N_2N_3/{k!} &:& t=0\\
       kn^{k-t-1}N_2N_3/{(k-t)!}  &:& t\geq 1\\
     \end{array}
   \right..} 

We now estimate $N_2=N_2(t)$, the number of ways to choose the set of edges $E_F$ for $F\in\CP_t$. As observed earlier, at least $k-t-1$ vertices have positive outdegree, and similarly for the indegree. We need to assign at most $k$ oriented edges to the $k-t$ vertices such that these conditions are met. Recall $d_{\text{out}}(i)$ to be the outdegree of vertex $i$. We will allow for repetitions when choosing the edges to include graphs with less than $k$ edges. Hence we may impose the constraint $\sum_{i=1}^{k-t}d_{\text{out}}(i)=k$. For at least $k-t-1$ vertices, $d_{\text{out}}(i)\geq 1$. This gives\footnote{This follows from the standard \textit{stars and bars} combinatorial argument; see \cite{feller}.} $\binom{k}{t+1}$ ways of choosing the outdegrees $(d_{\text{out}}(i))_{i=1}^{k-t}$. To assign the incoming edges of the vertices, we partition the $k$ edges into $k-t$ nonempty parts $(E_i)_{i=1}^{k-t}$. We first choose $k-t$ edges to belong to the different $E_i$'s and then we choose parts for each of the remaining edges. This can be done in at most $\binom{k}{t}(k-t)^t$ ways. Finally, we assign the $k-t$ parts to the vertices with positive indegree. If all $k-t$ vertices have incoming edges, there are at most $(k-t)!$ ways to assign each of them an $E_i$. Now suppose only $k-t-1$ of the vertices have incoming edges. First, there are at most $(k-t)^4$ ways to choose $2$ vertices and $2$ parts, with one vertex being assigned both parts and the other having no incoming edges. Next, there are $(k-t-2)!$ ways of assigning the remaining parts to the remaining vertices. Hence 
\all{\notag N_2&\leq\binom{k}{t+1}\binom{k}{t}(k-t)^t((k-t)!+(k-t)^4(k-t-2)!)\\
\notag&\leq \frac{k^{2t+1}}{(t+1)!t!}k^t(k-t)!k^2\\
\label{n2bound}&\leq\frac{k^{3t+3}(k-t)!}{(t+1)!t!}.}

We now estimate $N_3=N_3(t)$, the number of ways of choosing $\CE_F$ once $V_F$ and $E_F$ have been chosen. Since each vertex has at least one outgoing edge, the maximum outdegree of any vertex is at most $t+1$. On the other hand, since $d_{\text{out}}(1)+\ldots+d_{\text{out}}(k-t)\leq k$, at least $\max(k-2t,0)$ vertices have $d_{\text{out}}(i)=1$. At least $\max(2k-4t,0)$ legs start from these vertices so at most $4t$ legs begin at vertices with $d_{\text{out}}(i)>1$. At each of these legs, we have at most $t+1$ choices to make when choosing the path. We thus have 
\eql{\label{n3bound}N_3\leq (t+1)^{4t},} which is independent of the chosen vertices and edges. 

For $t=0$, using ~\eqref{scontrib}, ~\eqref{n2bound} and ~\eqref{n3bound}, we have
\al{S_0&\leq n^{k-3/2}k^2/{k!}N_2(0)N_3(0)\\
&\leq k^5n^{k-3/2}.} Since $W_x(F)=O(1)$ for $t=0$, the contribution to ~\eqref{pathsum} is $o(n^{k-1})$.

For $t\geq 1$, we have
\al{S_tK^{2t}&\ll n^{k-t-1}kN_2N_3K^{2t}/{(k-t)!}\\
&\leq k^4n^{k-1}\frac{k^{3t}(t+1)^{4t}}{n^{\eps t}(t+1)!t!}\\
&\leq k^4n^{k-1}\frac{k^{3t}(te)^{2t}}{n^{\eps t}},} where we have used the estimates $t!>\frac{t^t}{e^t}$ and $\frac{(t+1)^t}{t^t}\leq e$. For $k=o(n^{\eps/5})$, the last expression is decreasing for $t\leq k$ and bounding each term by the bound for the $t=1$ term, we have
\eq{\sum_{t}S_tK^{2t}\ll k^7n^{k-1-\eps}=o(n^{k-1})} for $k=o(n^{\eps/7})$.

\end{proof}

\section{Proof of Theorem ~\ref{delocthm}}\label{secthm}

\begin{proof}

We will work on the event 
\eq{E=E_n=\{\rho(X)<1+\epsilon,\lam>1+2\epsilon\text{ for all }\lam\in\bigcup_{\theta\in\Theta}\Lambda^{\theta}\}} which occurs w.h.p. Fix $\theta\in\Theta$ and for $\lam\in\Lambda^{\theta}$, let 
\eq{R_{\lam}:=\left(\frac{X}{\sqrt{n}}-\lam\right)^{-1}} denote the resolvent of $X/\sqrt{n}$. On $E$, $\lam>\rho(X)$, so we may expand $R_{\lam}$ as a Neumann series
\al{R_{\lam}&=-\frac{1}{\lam}\left(1+\frac{1}{\sqrt{n}}\sum_{i\geq 1}\frac{X^i}{n^{(i-1)/2}\lam^i}\right)\\
&=:-\frac{1}{\lam}\left(1+\frac{1}{\sqrt{n}}S_{\lam}\right).}
We write the Jordan decomposition of $A$ as $A=VJU^*$ where $V$ (resp. $U^*$) is the $n\times \rk(A)$ (resp. $\rk(A)\times n$) matrix of generalized right (resp. left) eigenvectors of $A$ associated to nonzero eigenvalues of $A$ satisfying $U^*V=1$ and $J$ is the Jordan matrix of $A$ restricted to nonzero eigenvalues with size $\rk(A)\times\rk(A)$. Starting with the eigenvalue equation $\det(\frac{X}{\sqrt{n}}+A-\lam)=0$ and using the determinant identity $\det(1+AB)=\det(1+BA)$, we have
\al{\det\left(\frac{X}{\sqrt{n}}+A-\lam\right)=0&\Rightarrow\det\left(1+R_{\lam}A\right)=0\\
&\Rightarrow \det\left(1-\frac{1}{\lam}U^*\left(1+\frac{1}{\sqrt{n}}S_{\lam}\right)VJ\right)=0\\
&\Rightarrow \det\left(-\lam+J+\frac{1}{\sqrt{n}}U^*S_{\lam}VJ\right)=0.}
 
Let $J_{\theta}$ be the block matrix of $J$ corresponding to eigenvalue $\theta$ and let $U^*_{\theta}$ and $V_{\theta}$ be the restrictions of $U^*$ and $V$ to the generalized left and right eigenvectors of $\theta$ respectively. Recall Proposition~\ref{detpert} as well as the notation used therein. We apply Proposition~\ref{detpert} with $M=J_{\theta}$ and $P=P^{\theta}=\frac{1}{\sqrt{n}}U^*_{\theta}S_{\lam}V_{\theta}J_{\theta}$.

First note that for each column indexed by $t\in I^{\theta}_v$, $J_{\theta}e_t=\theta e_t$, where $e_t$ is the coordinate vector corresponding to $t$. Hence for $s\in I^{\theta}_u$ and $t\in I^{\theta}_v$, 
\eq{P^{\theta}_{st}=\frac{1}{\sqrt{n}}\theta u^*_sS_{\lam}v_t.}  Observe that the moment assumption made in Theorem ~\ref{delocthm} guarantees the applicability of Lemma~\ref{Sclt} to the collection
\eq{\{\sqrt{n}P^{\theta}_{st}:s\in I^{\theta}_u,t\in I^{\theta}_v,\theta\in\Theta\}.} 

By Lemma ~\ref{Sclt}, $(\sqrt{n}P^{\theta}_{st})_{s,t,\theta}\Rightarrow (F_r)_{r\in I_2}$ defined by $\eqref{Ftheta}$, $\eqref{grdef}$ and $\eqref{gtheta}$. Finally, applying Proposition~\ref{detpert} yields the procedure to determine the fluctuations as specified in Theorem ~\ref{delocthm}.
\end{proof}

\appendix

\section{}\label{secdet}
In this section we state the deterministic perturbation result referred to in the proof of Theorem ~\ref{delocthm}. It is originally attributed to Lidskii. See \cite{lidskii} and references cited within. We remind the reader that the Schur complement of $A$ in the block matrix $\block{cc}{A&B\\C&D}$ is $D-CA^{-1}B$.
\begin{prop}\label{detpert}
Let $M$ be a $d\times d$ deterministic matrix in Jordan form. For notational simplicity, we will assume $M$ has a single eigenvalue $\theta$. Let
\eq{J_k:=\block{cccc}{\theta&1&&\\&\theta&1&\\&& \ddots&1\\&&&\theta}} denote the $k\times k$ Jordan block and write 
\eq{M=\bigoplus_{k=1}^KJ_{k}^{\oplus m_{k}}}  Hence for each $k\in [K]$, $M$ has $m_{k}$ Jordan blocks $J_{k}$.  Let $P_n$ be a sequence of $d\times d$ perturbation matrices with entries of size $o(1)$. Then  $M+P_n$ has spectrum \eq{\Lambda(M+P_n)=\{\lam_{k,m,i}:k\in [K],m\in [m_{k}],i\in[k]\}} with $\lam_{k,m,i}\rightarrow\theta$ for all $k\in K$, $m\in [m_k]$ and $i\in[k]$. The fluctuations \eq{f_{k,m,i}:=\lam_{k,m,i}-\theta} are given by the following procedure. 

Let $c_k:=\sum_{j=1}^km_j$ and set $c:=c_K$. Decompose $P=P_n$ into $c^2$ blocks $(B_{ij})_{i,j=1}^c$ with the $c$ diagonal blocks $(B_{i,i})_{i=1}^c$ having sizes 
\eq{1,\ldots,1,2.\ldots,2,\ldots,K,\ldots K} with $k$ occurring with multiplicity $m_k$. Let $k_i\times k_j$ denote the size of block $B_{i,j}$. This block decomposition is conformal with that of $M$ induced by the $J_k$'s. Let $R=R_n$ be the submatrix of $P$ of size $c\times c$ with entries given by 
\eq{R_{ij}=(B_{ij})_{k_i1}.} Hence $R$ is formed from the lower left elements of the blocks in the decomposition of $P$. 

Let $E_k=R_{c_k\times c_k}$ be upper left submatrices of $R$ and let $F_k$ be the $m_k\times m_k$ Schur complement of $E_{k-1}$ in $E_{k}$, where we set $F_1:=E_1$. Then, to leading order, the fluctuations $f_{k,m,i}$ are given by the $k$ $k$-th roots of the $m_k$ eigenvalues of $F_k$ for each $k\in [K]$. If $M$ has multiple eigenvalues, we apply the above procedure to each eigenvalue separately.
\end{prop}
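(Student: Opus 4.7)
The plan is to analyze the characteristic equation $\det(\lambda I - M - P) = 0$ perturbatively. Substituting $\lambda = \theta + f$ and writing $N := M - \theta I$ (block-diagonal nilpotent with Jordan blocks $N_k$ of size $k$ on the diagonal), the equation becomes $\det(fI - N - P) = 0$. Since $\det(fI - N) = f^d \neq 0$ for the small nonzero $f$ of interest, we equivalently seek roots of $\det(I - (fI - N)^{-1}P) = 0$. The key explicit input is $(fI - N_k)^{-1} = \sum_{j=0}^{k-1} f^{-(j+1)} N_k^j$, whose entry at position $(a, a+j)$ equals $f^{-(j+1)}$ and which is dominated by the upper-right corner entry of magnitude $|f|^{-k}$.

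I would first verify the single-block case $M = J_k$, which is the classical Lidskii--Vishik--Lyusternik statement. The product $(fI - N_k)^{-1} Q$ has its dominant contribution concentrated in its first row (from the corner of the inverse), with $(1,b)$ entry equal to $f^{-k} Q_{k,b} + O(|f|^{-(k-1)} \|Q\|)$. Expanding the determinant then gives
\[
\det\bigl(I - (fI - N_k)^{-1} Q\bigr) = 1 - f^{-k} Q_{k,1} + O\bigl(|f|^{-(k-1)} \|Q\|\bigr),
\]
so the leading equation $f^k = Q_{k,1}$ produces the $k$ fluctuations $\zeta_k^i Q_{k,1}^{1/k} (1+o(1))$.

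For the general case I would carry out the analogous block-by-block expansion. The $(1,1)$ entry of the $(i,j)$ block of $(fI - N)^{-1}P$ is $f^{-k_i} R_{ij}$ plus terms of lower order in $|f|^{-1}$, where $R$ is precisely the matrix of lower-left block entries defined in the statement; non-first rows of each block contribute only subdominantly. Collecting these dominant pieces, the equation $\det(I - (fI - N)^{-1}P) = 0$ reduces at leading order to $\det(D(f) - R) = 0$ with $D(f) = \mathrm{diag}(f^{k_1}, \ldots, f^{k_c})$. I would then partition indices by Jordan block size and apply the Schur complement identity $\det\bigl(\begin{smallmatrix} A & B \\ C & D \end{smallmatrix}\bigr) = \det(A) \det(D - CA^{-1}B)$ to separate each size-$k$ subproblem. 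At the scale $|f|^k \asymp |\mu|$ for $\mu$ an eigenvalue of $F_k$, the smaller-block diagonal entries $f^{k'}$ ($k' < k$) dominate the corresponding entries of $R$, so the smaller-block part $D(f)|_{\text{small}} - E_{k-1}$ contributes $E_{k-1}^{-1}$ to the Schur correction at leading order; this yields the effective equation $\det(f^k I_{m_k} - F_k) = 0$ with $F_k$ the prescribed Schur complement of $E_{k-1}$ in $E_k$, whose $m_k$ eigenvalues give the $m_k k$ fluctuations at this scale via their $k$-th roots.

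The main technical obstacle is the rigorous error bookkeeping. One must verify that the subdominant entries of $(fI - N_k)^{-1}$, the couplings between blocks of different Jordan sizes beyond those captured in $F_k$, and the nonleading terms in the expansion of the block determinant all contribute $o(|f|^k)$ corrections at each scale. The separation of scales $|f|^{1/K} \gg |f|^{1/(K-1)} \gg \cdots \gg |f|$ (for $|f|<1$) among the expected fluctuations provides the needed hierarchy, but carrying this through all $K$ levels cleanly requires careful induction with explicit error estimates.
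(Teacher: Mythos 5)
The overall architecture of your argument is the correct one: pass to $\det\bigl(I-(fI-N)^{-1}P\bigr)=0$, use $(fI-N_k)^{-1}=\sum_{j=0}^{k-1}f^{-(j+1)}N_k^j$, collect the dominant first-row/corner contributions, and reduce the $d\times d$ determinant to the $c\times c$ reduced equation $\det(D(f)-R)=0$ with $D(f)=\diag(f^{k_1},\ldots,f^{k_c})$. That is the standard Newton-polygon route to the Lidskii--Vishik--Lyusternik result, and your single-block computation is right. The break is in the scale-separation/Schur step, and the sentence is already internally inconsistent: you say that for $k'<k$ the diagonals $f^{k'}$ dominate the corresponding entries of $R$, and then conclude that the small-block part ``contributes $E_{k-1}^{-1}$.'' But when $f^{k'}$ dominates, $(D(f)|_{\mathrm{small}}-E_{k-1})^{-1}\approx D(f)|_{\mathrm{small}}^{-1}$, not $E_{k-1}^{-1}$, and the resulting correction $R_{ms}\,D(f)|_{\mathrm{small}}^{-1}R_{sm}$ is of order $\eps^2/|f|^{k'}=o(|f|^k)$ (where $\eps:=\|P\|$), hence negligible. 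The Schur correction that actually survives comes from the blocks of \emph{larger} Jordan size $k'>k$: there $f^{k'}=o(\eps)$, so the diagonal block is $\approx -R_{ll}$, and eliminating those rows and columns gives the effective equation $\det\bigl(f^kI_{m_k}-(R_{mm}-R_{ml}R_{ll}^{-1}R_{lm})\bigr)=0$ --- the Schur complement of the \emph{larger}-block submatrix, which is exactly the $\SC\bigl(F^\theta|_{\{k_s,k_t\geq k+1\}},F^\theta|_{\{k_s,k_t\geq k\}}\bigr)$ used in Theorem~\ref{delocthm}, and not the Schur complement of $E_{k-1}$.

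A $3\times3$ check makes this concrete. Take $M=(\theta)\oplus J_2(\theta)$, so $K=2$, $m_1=m_2=1$, $c=2$, and $R=\bigl(\begin{smallmatrix}P_{11}&P_{12}\\P_{31}&P_{32}\end{smallmatrix}\bigr)$. Expanding $\det\bigl(M+P-(\theta+f)I\bigr)$ at scale $|f|\sim\eps$ gives $-(P_{11}-f)P_{32}+P_{12}P_{31}+O(\eps^3)=0$, i.e.\ $f=P_{11}-P_{12}P_{31}/P_{32}+o(\eps)$ (the Schur complement of $R_{22}$ in $R$); at scale $|f|\sim\eps^{1/2}$ one gets $-f(f^2-P_{32})+O(\eps^2)=0$, i.e.\ $f^2=P_{32}(1+o(1))$, with no correction from $E_1=P_{11}$. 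Both disagree with $F_1=E_1=P_{11}$ and $F_2=P_{32}-P_{31}P_{12}/P_{11}$ as your derivation (and the proposition as literally printed) would have it. So the proof should be rerouted: keep the reduction to $\det(D(f)-R)=0$, but at each scale Schur-complement out the rows and columns of \emph{larger} Jordan size. Note also that the paper does not itself prove this proposition (it cites the Lidskii literature), so the operative check is consistency with Theorem~\ref{delocthm} and with direct computation as above.
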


We remark on a few special cases of Proposition ~\ref{detpert}. We denote the entries of $P=P_n$ by $p_{ij}$ and assume $p=O(\frac{1}{\sqrt{n}})$ (as will turn out to be the case in our applications).

\begin{enumerate}
\item Suppose $M = \diag(\theta_1,\ldots,\theta_d)$ is diagonal with distinct eigenvalues. Let $\lam_j$ denote the corresponding eigenvalues of $M+P$ in the sense that $\lam_j\rightarrow\theta_j$ as $n\rightarrow\infty$ Then 
\eq{f_j:=\lam_j-\theta_j=p_{jj}(1+o(1)).}
\item Suppose $M=\theta I_d$. Then $\{\sqrt{n}(\lam_j-\theta)\}_{j=1}^d$ converge to the $d$ eigenvalues of $\sqrt{n}P$.
\item Suppose $M=J_d(\theta)$. Then $\{n^{\frac{1}{2d}}(\lam_j-\theta)\}_{j=1}^d$ converge to the $d$ roots of $\sqrt{n}P_{k1}$.
\end{enumerate}

\section{}\label{sec4eps}

In this appendix, we extend the results involving the moment method, namely Proposition ~\ref{myclt} and Lemma ~\ref{Sclt} using a truncation argument (see \cite{bai-silver}). Consider the following two assumptions on the atom distribution $x$.
\begin{enumerate}[label=(\roman*)]
\item \label{Mhyp}$|x|\leq K=O(n^M)$.
\item \label{mhyp}$\E|x|^{m}<\infty$, $m=2/M$.
\end{enumerate}
We show that if Proposition ~\ref{myclt} and Lemma ~\ref{pathlemma} hold for \ref{Mhyp} with $M< 1/2$, then they hold for \ref{mhyp}. 

Suppose we have \ref{mhyp} with $m>4$, corresponding to $M=2/m<1/2$. We first show that the event
\eq{\{|x_{ij}|\leq n^{M} \text{ for all } i,j\in[n]\}} occurs w.h.p. Indeed, we have
\all{\label{ep1}\P\left[|x_{ij}|\geq n^{M} \text{ some } i,j\in [n]\right]
&\leq n^2\P\left[|x|^{m}\geq n^2\right].}
Since $n^2\mathbbm{1}_{|x|^m\geq n^2}\leq |x|^m$ and $\E|x|^m<\infty$, the last expression converges to $0$ by the dominated convergence theorem.  

Now define the truncated random variables $\hat{x}:=x\mathbbm{1}_{|x|\leq n^{M}}$ and $\hat{X}=(\hat{X})_{ij}$ by $\hat{X}_{ij}:=\hat{x}_{ij}$. 
While $\hat{x}$ is bounded, it no longer has mean zero. On the other hand, for $n$ sufficiently large, we have
\all{|\E \hat{x}|&\leq \E|x\mathbbm{1}_{|x|\geq n^{M}}|\notag\\
&\leq \frac{\E|x|^{m}\mathbbm{1}_{|x|\geq n^{M}}}{n^{(m-1)M}}\notag\\
&\ll n^{-(m-1)M}\notag\\
&\leq n^{-3/2}\label{xhat}.} By Schur's test for the operator norm of a matrix, we have
\eql{\label{schur}\|\E\hat{X}\|=O(n^{-1/2}).} Now let $\tilde{x}:=\hat{x}-\E\hat{x}$ and $\tilde{X}:=\hat{X}-\E\hat{X}$ denote the truncated and centered random variables. By construction, $\E\tilde{x}=0$. Furthermore,
\eql{\label{tildexvar}\E|\tilde{x}|^2=\E|\hat{x}|^2-|\E\hat{x}|^2\rightarrow\E|x|^2=1} by ~\eqref{xhat} and dominated convergence. Given ~\eqref{tildexvar}, it is easy to check that under ~\ref{Mhyp}, Proposition ~\ref{myclt} is valid for $\tilde{X}$. Since $\E|\tilde{x}|^2\leq\E|x|^2$, Lemma ~\ref{Sclt} also valid for $\tilde{x}$. 
To prove the validity of Proposition \ref{myclt} and Lemma ~\ref{Sclt} for $x$ under ~\ref{mhyp}, it suffices to prove the following.
\begin{lem}\label{truncation} Suppose $u=u_n$ and $v=v_n$ are unit vectors in $\C^n$.
Then for every $\gamma>0$, the event
\eq{A_{n,\gamma}:=\bigcup_{k\leq\log^2n}\{|u^*\hat{X}^kv-u^*\tilde{X}^kv|>\gamma n^{(k-1)/2}\}} occurs w.h.p.
\end{lem}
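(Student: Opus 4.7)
I address the statement as literally worded. I should note, however, that the statement asserts the \emph{bad} event $A_{n,\gamma}$ (the difference $|u^*\hat X^k v-u^*\tilde X^k v|$ exceeds $\gamma n^{(k-1)/2}$ for some $k\le\log^2 n$) occurs with probability tending to $1$, whereas the role of this lemma in the surrounding truncation argument requires the opposite bound $\P(A_{n,\gamma})=o(1)$: otherwise one cannot transfer Proposition~\ref{myclt} and Lemma~\ref{Sclt} from $\tilde X$ to $\hat X$. My analysis below establishes the latter, and I do not see any route to the literal wording; I read the statement as a misprint in which the quantifier has been inverted.

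The plan is as follows. Write $\hat X=\tilde X+E$ with $E=\E\hat X=(\E\hat x)\,\mathbf 1\mathbf 1^T$ a rank-one matrix; by~\eqref{xhat}, $|\E\hat x|=O(n^{-3/2})$, and by Schur's test \eqref{schur}, $\|E\|_{\mathrm{op}}=O(n^{-1/2})$. Expanding the telescoping difference,
\eq{\hat X^k-\tilde X^k=\sum_{\sigma\in\{0,1\}^k\setminus\{\mathbf 0\}}W_{\sigma_1}\cdots W_{\sigma_k},\qquad W_0=\tilde X,\ W_1=E,}
each term with $j\ge 1$ copies of $E$ factors, using the rank-one form of $E$, as
\eq{(\E\hat x)^j\cdot(u^*\tilde X^{a_0}\mathbf 1)\cdot(\mathbf 1^T\tilde X^{a_1}\mathbf 1)\cdots(\mathbf 1^T\tilde X^{a_{j-1}}\mathbf 1)\cdot(\mathbf 1^T\tilde X^{a_j}v),}
where $a_0+a_1+\cdots+a_j=k-j$. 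I would then control each bilinear form by Lemma~\ref{pathlemma}, applied with $\mathbf 1/\sqrt n$ playing the role of a unit vector: this gives $\E|u^*\tilde X^a\mathbf 1|^2=O(n^a)$, $\E|\mathbf 1^T\tilde X^b\mathbf 1|^2=O(n^{b+1})$, and $\E|\mathbf 1^T\tilde X^a v|^2=O(n^a)$, each up to polynomial-in-$k$ factors.

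Combining via H\"older, a single $\sigma$-term with $j$ copies of $E$ contributes $O(|\E\hat x|^j\cdot n^{(k-1-j)/2+\cdot}\,\mathrm{poly}(k))=O(n^{(k-1-3j)/2}\,\mathrm{poly}(k))$ in $L^2$ norm. Summing over the $\binom{k}{j}$ choices of $\sigma$, the dominant $j=1$ piece yields $\E|u^*(\hat X^k-\tilde X^k)v|^2=O(\mathrm{poly}(k)\cdot n^{k-4})$, a factor of $n^{-3}$ below the target $n^{k-1}$. Markov's inequality combined with a union bound over $k\le\log^2 n$ then gives $\P(A_{n,\gamma})=o(1)$, yielding the (complementary) conclusion needed for the truncation.

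The main technical obstacle is applying Lemma~\ref{pathlemma} uniformly across all $k\le\log^2 n$: the path-counting proof gives an $O(1)$ bound on $\E|Z_k|^2$ only for $k\ll n^c$ (with $c$ depending on the truncation exponent $M$), which comfortably covers $k\le\log^2 n$, but one must track the polynomial-in-$k$ losses through both the expansion and the $j$-sum so that the union bound still closes. It is precisely this feature of Lemma~\ref{pathlemma} — its validity under only $4+\epsilon$ moments and for $k$ allowed to grow polynomially in $n$ — that makes the entire truncation step possible under the stated moment hypothesis.
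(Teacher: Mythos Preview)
You are right that the statement as written is inverted: the truncation argument needs $\P(A_{n,\gamma})=o(1)$, and this is what the paper actually proves.

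Your approach via the rank-one structure of $E=\E\hat X$ together with Lemma~\ref{pathlemma} is different from the paper's, and it has a real gap at the H\"older step. The factors $u^*\tilde X^{a_0}\mathbf 1,\ \mathbf 1^T\tilde X^{a_1}\mathbf 1,\ldots,\mathbf 1^T\tilde X^{a_j}v$ are not independent---they all involve the same matrix $\tilde X$---so an $L^2$ bound on their product via H\"older would require $L^{2(j+1)}$ control of each factor, whereas Lemma~\ref{pathlemma} supplies only second moments. For the dominant $j=1$ term you can salvage an $L^1$ bound by Cauchy--Schwarz and then apply Markov, but for $j\ge 2$ this route stalls without either higher moments of the bilinear forms or a separate deterministic bound on the factors.

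The paper sidesteps this issue entirely by using operator-norm rather than moment bounds. It first invokes the Bai--Yin estimate~\eqref{bybound} to restrict to the high-probability event $\{\|(\tilde X/\sqrt n)^k\|\le 3k\text{ for all }k\le\log^2 n\}$, and on that event bounds $n^{-(k-1)/2}\|(\tilde X+\E\hat X)^k-\tilde X^k\|$ deterministically by submultiplicativity, expanding over the number $l\ge 1$ of copies of $\E\hat X$ and using $\|\E\hat X\|=O(n^{-1/2})$ from~\eqref{schur}. This yields a uniform $o(1)$ bound with no moment control on products needed; in particular, Lemma~\ref{pathlemma} is not used in this step at all.
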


We first state a result that is a consequence of the proof in \cite{bai-yin}. Following the notation of \cite{bai-yin} we define $\delta:=n^{M-1/2}$ so that $|\tilde{x}|\leq\delta\sqrt{n}$. Fix $z>k+1$ and $p$ a positive integer.
Then \al{\P\left[\left\|\left(\frac{1}{\sqrt{n}}{\tilde{X}}\right)^k\right\|\geq z\right]&\leq z^{-2p}n^{-pk}\E\Tr\left(\tilde{X}^k\left(\tilde{X}^k\right)^*\right)^p\\
&=:z^{-2p}n^{-pk}E_n.}
In \cite{bai-yin}(pg. $561$), it is shown that
\eq{E_n\leq n^{kp+\frac{3}{2}}\sum_{l=1}^{pk}\binom{2kp}{2l}(k+1)^{2kp-2l+2p}(2kp)\left(\frac{6kp\delta^{1/6}}{\log\frac{\delta\sqrt{n}}{(2kp)^3}}\right)^{6kp-6l}\delta^{kp-l}.} 

In our application, $k\leq\log^2n$ and choosing $p=\delta^{-1/7}$ say, we have
\eql{\label{baifrac}\frac{6kp\delta^{1/6}}{\log\frac{\delta\sqrt{n}}{(2kp)^3}}\rightarrow 0.} In fact, the left-hand side of ~\eqref{baifrac} is less than $1$ for $n\geq N(m)$.

For such $n$, following \cite{bai-yin}(pg. $562$), it then follows that 
\eq{z^{-2p}n^{-pk}E_n\leq \left((2kpn^2)^{1/p}(1+(k+1)\delta^{1/2})^{2k}\left(\frac{k+1}{z}\right)^2\right)^p.}

Choosing $z=3k$ say, for any $k\leq\log^2n$ we have
\eql{\label{bybound}\P\left[\left\|\left(\frac{1}{\sqrt{n}}{\tilde{X}}\right)^k\right\|\geq 3k\right]=O(e^{-n^c})} for some $c=c(m)>0$. We now turn to the proof of Lemma ~\ref{truncation}.
\begin{proof}

By ~\eqref{bybound}, we may assume $\left\|\left(\frac{1}{\sqrt{n}}\tilde{X}\right)^k\right\|\leq 3k$ for all $k\leq \log^2 n$ which occurs w.h.p. We will need the crude bound
\eql{\label{crudesum}\sum_{\substack{a_1+\ldots+a_k=n\\ a_i\geq 0}}\prod_{i=1}^ka_i\leq n^{2k}.} 
We then have 
\al{\frac{1}{n^{(k-1)/2}}|u^*\hat{X}^kv-u^*\tilde{X}^kv|&\leq n^{-(k-1)/2}\|(\tilde{X}+\E\hat{X})^k-\tilde{X}^k\|\\
&\leq\sum_{l=1}^k\frac{1}{n^{(l-1)/2}}\sum_{l'=0}^{l+1}\sum_{a_1+\ldots+a_{l'}=k-l}\prod_{i=1}^{l'}\left\|\left(\frac{\tilde{X}}{\sqrt{n}}\right)^{a_i}\right\|\|\E\tilde{X}\|^l\\
&\leq\sum_{k=1}^l \frac{1}{n^{l-1/2}}\sum_{l'=0}^{l+1}(3k^2)^{l'}\\
&\leq\sum_{l=1}^k\frac{l+1}{n^{(l-1)/2}}\left(\frac{3k^2}{\sqrt{n}}\right)^{l+1}=o(1),} where we have used \eqref{crudesum} and ~\eqref{schur} in the third line. 

\end{proof}

\bibliographystyle{plain}
\bibliography{refs}

\def\cprime{$'$}
\begin{thebibliography}{10}

\bibitem{bai-silver}
Z.~D. Bai and J.~Silverstein.
\newblock {\em Spectral analysis of large dimensional random matrices}.
\newblock Mathematics Monograph Series \textbf{2}. Science Press, Beijing,
  2006.

\bibitem{bai-yin}
Z.~D. Bai and Y.~Q. Yin.
\newblock Limiting behavior of the norm of products of random matrices and two
  problems of {G}eman-{H}wang.
\newblock {\em Probab. Theory Relat. Fields}, 73:555--569, 1986.

\bibitem{bbp}
J.~Baik, G.~{Ben Arous}, and S.~P{\'e}ch{\'e}.
\newblock Phase transition of the largest eigenvalue for nonnull complex sample
  covariance matrices.
\newblock {\em Ann. Probab.}, 33(5):1643--1697, 2005.

\bibitem{bsspike}
J.~Baik and J.~W. Silverstein.
\newblock Eigenvalues of large sample covariance matrices of spiked population
  models.
\newblock {\em J. Multivariate Anal.}, 97(6):1382--1408, 2006.

\bibitem{billingsley2}
P.~Billingsley.
\newblock {\em Convergence of probability measures}.
\newblock Wiley Series in Probability and Statistics: Probability and
  Statistics. John Wiley \& Sons, Inc., New York, second edition, 1999.
\newblock A Wiley-Interscience Publication.

\bibitem{bc}
C.~Bordenave and M.~Capitaine.
\newblock Outlier eigenvalues for deformed i.i.d. random matrices.
\newblock {\em arXiv}, math.PR(1403.6001v2), 2014.

\bibitem{cdf1}
M.~Capitaine, C.~Donati-Martin, and D.~F{\'e}ral.
\newblock The largest eigenvalues of finite rank deformation of large {W}igner
  matrices: convergence and nonuniversality of the fluctuations.
\newblock {\em Ann. Probab.}, 37(1):1--47, 2009.

\bibitem{cdf2}
M.~Capitaine, C.~Donati-Martin, and D.~F{\'e}ral.
\newblock Central limit theorems for eigenvalues of deformations of {W}igner
  matrices.
\newblock {\em Ann. Inst. Henri Poincar\'e Probab. Stat.}, 48(1):107--133,
  2012.

\bibitem{feller}
W.~Feller.
\newblock {\em An Introduction to Probability Theory and Its Applications, Vol
  1}.
\newblock Wiley, 2nd ed edition, 1950.

\bibitem{wick}
L.~Isserlis.
\newblock On a formula for the product-moment coefficient of any order of a
  normal frequency distribution in any number of variables.
\newblock {\em Biometrika}, 12(1/2):134--139, 1918.

\bibitem{ky1}
A.~Knowles and J.~Yin.
\newblock The isotropic semicircle law and deformation of {W}igner matrices.
\newblock {\em Comm. Pure Appl. Math.}, 66(11):1663--1750, 2013.

\bibitem{ky2}
A.~Knowles and J.~Yin.
\newblock The outliers of a deformed {W}igner matrix.
\newblock {\em Ann. Probab.}, 42(5):1980--2031, 2014.

\bibitem{lidskiiorig}
V.~B. Lidski{\u\i}.
\newblock On the theory of perturbations of nonselfadjoint operators.
\newblock {\em \u Z. Vy\v cisl. Mat. i Mat. Fiz.}, 6(1):52--60, 1966.

\bibitem{lidskii}
J.~Moro, J.~V. Burke, and M.~L. Overton.
\newblock On the {L}idskii-{V}ishik-{L}yusternik perturbation theory for
  eigenvalues of matrices with arbitrary {J}ordan structure.
\newblock {\em Siam J. Matrix Anal. Appl.}, 18(4):793--817, 1997.

\bibitem{renfrew}
S.~O'Rourke and D.~Renfrew.
\newblock Low rank perturbations of large elliptic random matrices.
\newblock {\em Electron. J. Probab.}, 19:no. 43, 65, 2014.

\bibitem{prs}
A.~Pizzo, D.~Renfrew, and A.~Soshnikov.
\newblock On finite rank deformations of {W}igner matrices.
\newblock {\em Ann. Inst. Henri Poincar\'e Probab. Stat.}, 49(1):64--94, 2013.

\bibitem{rs}
D.~Renfrew and A.~Soshnikov.
\newblock On finite rank deformations of {W}igner matrices {II}: {D}elocalized
  perturbations.
\newblock {\em Random Matrices Theory Appl.}, 2(1):1250015, 36, 2013.

\bibitem{rochet}
J.~Rochet and F.~Benaych-Georges.
\newblock Outliers in the single ring theorem.
\newblock {\em arXiv}, math.PR(1308.3064v4), 2013.

\bibitem{tao-book}
T.~Tao.
\newblock {\em Topics in random matrix theory}, volume 132 of {\em Graduate
  Studies in Mathematics}.
\newblock American Mathematical Society, Providence, RI, 2012.

\bibitem{tao}
T.~Tao.
\newblock Outliers in the spectrum of iid matrices with bounded rank
  perturbations.
\newblock {\em Probab. Theory Relat. Fields}, 155(1-2):231--263, 2013.

\bibitem{taocirc}
T.~Tao and V.~Vu.
\newblock Random matrices: universality of {ESD}s and the circular law.
\newblock {\em Ann. Probab.}, 38(5):2023--2065, 2010.
\newblock With an appendix by Manjunath Krishnapur.

\bibitem{ljusternik}
M.~I. Vi{\v{s}}ik and L.~A. Ljusternik.
\newblock Solution of some perturbation problems in the case of matrices and
  self-adjoint or non-selfadjoint differential equations. {I}.
\newblock {\em Russian Math. Surveys}, 15(3):1--73, 1960.

\end{thebibliography}
\end{document}